\definecolor{sepia}{cmyk}{0, 0.83, 1, 0.70}
\newtheorem{theorem}{Theorem}[section]
\newtheorem{axiom}[theorem]{Axiom}
\newtheorem{conjecture}[theorem]{Conjecture}
\newtheorem{corollary}[theorem]{Corollary}
\newtheorem{definition}[theorem]{Definition}
\newtheorem{example}[theorem]{Example}
\newtheorem{exercise}[theorem]{Exercise}
\newtheorem{lemma}[theorem]{Lemma}
\newtheorem{proposition}[theorem]{Proposition}
\newtheorem{fact}[theorem]{Fact}
\newtheorem{remark}[theorem]{Remark}
\newenvironment{proof}[1][\it{Proof}]{\noindent\textbf{#1.} }{\ \rule{0.0em}{0.0em}{\hfill}{$\square$}}
\let\pdfoutput=\undefined\fi
\chardef\@x10\chardef\@xv60
\def\tcitime{
\def\@time{%
  \@minute\time\@hour\@minute\divide\@hour\@xv
  \ifnum\@hour<\@x 0\fi\the\@hour:%
  \multiply\@hour\@xv\advance\@minute-\@hour
  \ifnum\@minute<\@x 0\fi\the\@minute
  }}%
\def\x@hyperref#1#2#3{%
   \catcode`\~ = 12
   \catcode`\$ = 12
   \catcode`\_ = 12
   \catcode`\# = 12
   \catcode`\& = 12
   \catcode`\% = 12
   \y@hyperref{#1}{#2}{#3}%
}
\def\y@hyperref#1#2#3#4{%
   #2\ref{#4}#3
   \catcode`\~ = 13
   \catcode`\$ = 3
   \catcode`\_ = 8
   \catcode`\# = 6
   \catcode`\& = 4
   \catcode`\% = 14
}
\def\QCTOpt[#1]#2{%
  \def\QCTOptB{#1}
  \def\QCTOptA{#2}
}
\def\QCTNOpt#1{%
  \def\QCTOptA{#1}
  \let\QCTOptB\empty
}
\def\Qct{%
  \@ifnextchar[{%
    \QCTOpt}{\QCTNOpt}
}
\def\QCBOpt[#1]#2{%
  \def\QCBOptB{#1}%
  \def\QCBOptA{#2}%
}
\def\QCBNOpt#1{%
  \def\QCBOptA{#1}%
  \let\QCBOptB\empty
}
\def\Qcb{%
  \@ifnextchar[{%
    \QCBOpt}{\QCBNOpt}%
}
\def\PrepCapArgs{%
  \ifx\QCBOptA\empty
    \ifx\QCTOptA\empty
      {}%
    \else
      \ifx\QCTOptB\empty
        {\QCTOptA}%
      \else
        [\QCTOptB]{\QCTOptA}%
      \fi
    \fi
  \else
    \ifx\QCBOptA\empty
      {}%
    \else
      \ifx\QCBOptB\empty
        {\QCBOptA}%
      \else
        [\QCBOptB]{\QCBOptA}%
      \fi
    \fi
  \fi
}
\def\GRAPHICSPS#1{%
 \ifcase\GRAPHICSTYPE
   \special{ps: #1}%
 \or
   \special{language "PS", include "#1"}%
 \fi
}%
\def\graffile#1#2#3#4{%
    \bgroup
	   \@inlabelfalse
       \leavevmode
       \@ifundefined{bbl@deactivate}{\def~{\string~}}{\activesoff}%
        \raise -#4 \BOXTHEFRAME{%
           \hbox to #2{\raise #3\hbox to #2{\null #1\hfil}}}%
    \egroup
}%
\def\draftbox#1#2#3#4{%
 \leavevmode\raise -#4 \hbox{%
  \frame{\rlap{\protect\tiny #1}\hbox to #2%
   {\vrule height#3 width\z@ depth\z@\hfil}%
  }%
 }%
}%
\let\nographics=\@msidraft
\newif\ifwasdraft
\def\GRAPHIC#1#2#3#4#5{%
   \ifnum\@msidraft=\@ne\draftbox{#2}{#3}{#4}{#5}%
   \else\graffile{#1}{#3}{#4}{#5}%
   \fi
}
\def\addtoLaTeXparams#1{%
    \edef\LaTeXparams{\LaTeXparams #1}}%
\newif\ifBoxFrame \BoxFramefalse
\newif\ifOverFrame \OverFramefalse
\newif\ifUnderFrame \UnderFramefalse
\def\BOXTHEFRAME#1{%
   \hbox{%
      \ifBoxFrame
         \frame{#1}%
      \else
         {#1}%
      \fi
   }%
}
\def\doFRAMEparams#1{\BoxFramefalse\OverFramefalse\UnderFramefalse\readFRAMEparams#1\end}%
\def\readFRAMEparams#1{%
 \ifx#1\end%
  \let\next=\relax
  \else
  \ifx#1i\dispkind=\z@\fi
  \ifx#1d\dispkind=\@ne\fi
  \ifx#1f\dispkind=\tw@\fi
  \ifx#1t\addtoLaTeXparams{t}\fi
  \ifx#1b\addtoLaTeXparams{b}\fi
  \ifx#1p\addtoLaTeXparams{p}\fi
  \ifx#1h\addtoLaTeXparams{h}\fi
  \ifx#1X\BoxFrametrue\fi
  \ifx#1O\OverFrametrue\fi
  \ifx#1U\UnderFrametrue\fi
  \ifx#1w
    \ifnum\@msidraft=1\wasdrafttrue\else\wasdraftfalse\fi
    \@msidraft=\@ne
  \fi
  \let\next=\readFRAMEparams
  \fi
 \next
 }%
\def\IFRAME#1#2#3#4#5#6{%
      \bgroup
      \let\QCTOptA\empty
      \let\QCTOptB\empty
      \let\QCBOptA\empty
      \let\QCBOptB\empty
      #6%
      \parindent=0pt
      \leftskip=0pt
      \rightskip=0pt
      \setbox0=\hbox{\QCBOptA}%
      \@tempdima=#1\relax
      \ifOverFrame
          \typeout{This is not implemented yet}%
          \show\HELP
      \else
         \ifdim\wd0>\@tempdima
            \advance\@tempdima by \@tempdima
            \ifdim\wd0 >\@tempdima
               \setbox1 =\vbox{%
                  \unskip\hbox to \@tempdima{\hfill\GRAPHIC{#5}{#4}{#1}{#2}{#3}\hfill}%
                  \unskip\hbox to \@tempdima{\parbox[b]{\@tempdima}{\QCBOptA}}%
               }%
               \wd1=\@tempdima
            \else
               \textwidth=\wd0
               \setbox1 =\vbox{%
                 \noindent\hbox to \wd0{\hfill\GRAPHIC{#5}{#4}{#1}{#2}{#3}\hfill}\\%
                 \noindent\hbox{\QCBOptA}%
               }%
               \wd1=\wd0
            \fi
         \else
            \ifdim\wd0>0pt
              \hsize=\@tempdima
              \setbox1=\vbox{%
                \unskip\GRAPHIC{#5}{#4}{#1}{#2}{0pt}%
                \break
                \unskip\hbox to \@tempdima{\hfill \QCBOptA\hfill}%
              }%
              \wd1=\@tempdima
           \else
              \hsize=\@tempdima
              \setbox1=\vbox{%
                \unskip\GRAPHIC{#5}{#4}{#1}{#2}{0pt}%
              }%
              \wd1=\@tempdima
           \fi
         \fi
         \@tempdimb=\ht1
         \advance\@tempdimb by -#2
         \advance\@tempdimb by #3
         \leavevmode
         \raise -\@tempdimb \hbox{\box1}%
      \fi
      \egroup%
}%
\def\DFRAME#1#2#3#4#5{%
  \vspace\topsep
  \hfil\break
  \bgroup
     \leftskip\@flushglue
	 \rightskip\@flushglue
	 \parindent\z@
	 \parfillskip\z@skip
     \let\QCTOptA\empty
     \let\QCTOptB\empty
     \let\QCBOptA\empty
     \let\QCBOptB\empty
	 \vbox\bgroup
        \ifOverFrame 
           #5\QCTOptA\par
        \fi
        \GRAPHIC{#4}{#3}{#1}{#2}{\z@}%
        \ifUnderFrame 
           \break#5\QCBOptA
        \fi
	 \egroup
  \egroup
  \vspace\topsep
  \break
}%
\def\FFRAME#1#2#3#4#5#6#7{%
  \@ifundefined{floatstyle}
    {
     \begin{figure}[#1]%
    }
    {
	 \ifx#1h
      \begin{figure}[H]%
	 \else
      \begin{figure}[#1]%
	 \fi
	}
  \let\QCTOptA\empty
  \let\QCTOptB\empty
  \let\QCBOptA\empty
  \let\QCBOptB\empty
  \ifOverFrame
    #4
    \ifx\QCTOptA\empty
    \else
      \ifx\QCTOptB\empty
        \caption{\QCTOptA}%
      \else
        \caption[\QCTOptB]{\QCTOptA}%
      \fi
    \fi
    \ifUnderFrame\else
      \label{#5}%
    \fi
  \else
    \UnderFrametrue%
  \fi
  \begin{center}\GRAPHIC{#7}{#6}{#2}{#3}{\z@}\end{center}%
  \ifUnderFrame
    #4
    \ifx\QCBOptA\empty
      \caption{}%
    \else
      \ifx\QCBOptB\empty
        \caption{\QCBOptA}%
      \else
        \caption[\QCBOptB]{\QCBOptA}%
      \fi
    \fi
    \label{#5}%
  \fi
  \end{figure}%
 }%
\def\makeactives{
  \catcode`\"=\active
  \catcode`\;=\active
  \catcode`\:=\active
  \catcode`\'=\active
  \catcode`\~=\active
}
   \gdef\activesoff{%
      \def"{\string"}%
      \def;{\string;}%
      \def:{\string:}%
      \def'{\string'}%
      \def~{\string~}%
    }
\def\FRAME#1#2#3#4#5#6#7#8{%
 \bgroup
 \ifnum\@msidraft=\@ne
   \wasdrafttrue
 \else
   \wasdraftfalse%
 \fi
 \def\LaTeXparams{}%
 \dispkind=\z@
 \def\LaTeXparams{}%
 \doFRAMEparams{#1}%
 \ifnum\dispkind=\z@\IFRAME{#2}{#3}{#4}{#7}{#8}{#5}\else
  \ifnum\dispkind=\@ne\DFRAME{#2}{#3}{#7}{#8}{#5}\else
   \ifnum\dispkind=\tw@
    \edef\@tempa{\noexpand\FFRAME{\LaTeXparams}}%
    \@tempa{#2}{#3}{#5}{#6}{#7}{#8}%
    \fi
   \fi
  \fi
  \ifwasdraft\@msidraft=1\else\@msidraft=0\fi{}%
  \egroup
 }%
\def\TEXUX#1{"texux"}
\def\limfunc#1{\mathop{\rm #1}}%
\def\func#1{\mathop{\rm #1}\nolimits}%
\long\def\QQQ#1#2{%
     \long\expandafter\def\csname#1\endcsname{#2}}%
\long\def\QQA#1#2{}%
\def\QTR#1#2{{\csname#1\endcsname {#2}}}%
\def\EXPAND#1[#2]#3{}%
\def\NOEXPAND#1[#2]#3{}%
\def\LaTeXparent#1{}%
\def\ChildStyles#1{}%
\def\ChildDefaults#1{}%
\def\QTagDef#1#2#3{}%
  \providecommand{\UNICODE}[2][]{\protect\rule{.1in}{.1in}}
  \providecommand{\U}[1]{\protect\rule{.1in}{.1in}}
\def\QQfnmark#1{\footnotemark}
 \def\abstract{%
  \if@twocolumn
   \section*{Abstract (Not appropriate in this style!)}%
   \else \small 
   \begin{center}{\bf Abstract\vspace{-.5em}\vspace{\z@}}\end{center}%
   \quotation 
   \fi
  }%
   \def\registered{\relax\ifmmode{}\r@gistered
                    \else$\m@th\r@gistered$\fi}%
 \def\r@gistered{^{\ooalign
  {\hfil\raise.07ex\hbox{$\scriptstyle\rm\text{R}$}\hfil\crcr
  \mathhexbox20D}}}}{}%
\newdimen\theight
\def\newfmtname{LaTeX2e}
  \DeclareOldFontCommand{\rm}{\normalfont\rmfamily}{\mathrm}
  \DeclareOldFontCommand{\sf}{\normalfont\sffamily}{\mathsf}
  \DeclareOldFontCommand{\tt}{\normalfont\ttfamily}{\mathtt}
  \DeclareOldFontCommand{\bf}{\normalfont\bfseries}{\mathbf}
  \DeclareOldFontCommand{\it}{\normalfont\itshape}{\mathit}
  \DeclareOldFontCommand{\sl}{\normalfont\slshape}{\@nomath\sl}
  \DeclareOldFontCommand{\sc}{\normalfont\scshape}{\@nomath\sc}
\def\alpha{{\Greekmath 010B}}%
\def\beta{{\Greekmath 010C}}%
\def\gamma{{\Greekmath 010D}}%
\def\delta{{\Greekmath 010E}}%
\def\epsilon{{\Greekmath 010F}}%
\def\zeta{{\Greekmath 0110}}%
\def\eta{{\Greekmath 0111}}%
\def\theta{{\Greekmath 0112}}%
\def\iota{{\Greekmath 0113}}%
\def\kappa{{\Greekmath 0114}}%
\def\lambda{{\Greekmath 0115}}%
\def\mu{{\Greekmath 0116}}%
\def\nu{{\Greekmath 0117}}%
\def\xi{{\Greekmath 0118}}%
\def\pi{{\Greekmath 0119}}%
\def\rho{{\Greekmath 011A}}%
\def\sigma{{\Greekmath 011B}}%
\def\tau{{\Greekmath 011C}}%
\def\upsilon{{\Greekmath 011D}}%
\def\phi{{\Greekmath 011E}}%
\def\chi{{\Greekmath 011F}}%
\def\psi{{\Greekmath 0120}}%
\def\omega{{\Greekmath 0121}}%
\def\varepsilon{{\Greekmath 0122}}%
\def\vartheta{{\Greekmath 0123}}%
\def\varpi{{\Greekmath 0124}}%
\def\varrho{{\Greekmath 0125}}%
\def\varsigma{{\Greekmath 0126}}%
\def\varphi{{\Greekmath 0127}}%
\def\nabla{{\Greekmath 0272}}
\def\FindBoldGroup{%
   {\setbox0=\hbox{$\mathbf{x\global\edef\theboldgroup{\the\mathgroup}}$}}%
}
\def\Greekmath#1#2#3#4{%
    \if@compatibility
        \ifnum\mathgroup=\symbold
           \mathchoice{\mbox{\boldmath$\displaystyle\mathchar"#1#2#3#4$}}%
                      {\mbox{\boldmath$\textstyle\mathchar"#1#2#3#4$}}%
                      {\mbox{\boldmath$\scriptstyle\mathchar"#1#2#3#4$}}%
                      {\mbox{\boldmath$\scriptscriptstyle\mathchar"#1#2#3#4$}}%
        \else
           \mathchar"#1#2#3#4%
        \fi 
    \else 
        \FindBoldGroup
        \ifnum\mathgroup=\theboldgroup 
           \mathchoice{\mbox{\boldmath$\displaystyle\mathchar"#1#2#3#4$}}%
                      {\mbox{\boldmath$\textstyle\mathchar"#1#2#3#4$}}%
                      {\mbox{\boldmath$\scriptstyle\mathchar"#1#2#3#4$}}%
                      {\mbox{\boldmath$\scriptscriptstyle\mathchar"#1#2#3#4$}}%
        \else
           \mathchar"#1#2#3#4%
        \fi     	    
	  \fi}
\newif\ifGreekBold  \GreekBoldfalse
\let\SAVEPBF=\pbf
\def\pbf{\GreekBoldtrue\SAVEPBF}%
  \newcounter{equationnumber}  
  \def\mathletters{%
     \addtocounter{equation}{1}
     \edef\@currentlabel{\theequation}%
     \setcounter{equationnumber}{\c@equation}
     \setcounter{equation}{0}%
     \edef\theequation{\@currentlabel\noexpand\alph{equation}}%
  }
    \def\BibTeX{{\rm B\kern-.05em{\sc i\kern-.025em b}\kern-.08em
                 T\kern-.1667em\lower.7ex\hbox{E}\kern-.125emX}}}{}%
\def\AmS{{\protect\usefont{OMS}{cmsy}{m}{n}%
                A\kern-.1667em\lower.5ex\hbox{M}\kern-.125emS}}}{}%
\def\@@eqncr{\let\@tempa\relax
    \ifcase\@eqcnt \def\@tempa{& & &}\or \def\@tempa{& &}%
      \else \def\@tempa{&}\fi
     \@tempa
     \if@eqnsw
        \iftag@
           \@taggnum
        \else
           \@eqnnum\stepcounter{equation}%
        \fi
     \fi
     \global\tag@false
     \global\@eqnswtrue
     \global\@eqcnt\z@\cr}
\def\TCItag{\@ifnextchar*{\@TCItagstar}{\@TCItag}}
\def\@TCItag#1{%
    \global\tag@true
    \global\def\@taggnum{(#1)}%
    \global\def\@currentlabel{#1}}
\def\@TCItagstar*#1{%
    \global\tag@true
    \global\def\@taggnum{#1}%
    \global\def\@currentlabel{#1}}
\def\tint{\msi@int\textstyle\int}%
\def\tiint{\msi@int\textstyle\iint}%
\def\tiiint{\msi@int\textstyle\iiint}%
\def\tiiiint{\msi@int\textstyle\iiiint}%
\def\tidotsint{\msi@int\textstyle\idotsint}%
\def\toint{\msi@int\textstyle\oint}%
\def\tprod{\mathop{\textstyle \prod }}%
\newtoks\temptoksa
\newtoks\temptoksb
\newtoks\temptoksc
\def\msi@int#1#2{%
 \def\@temp{{#1#2\the\temptoksc_{\the\temptoksa}^{\the\temptoksb}}}%
 \futurelet\@nextcs
 \@int
}
\def\@int{%
   \ifx\@nextcs\limits
      \typeout{Found limits}%
      \temptoksc={\limits}%
	  \let\@next\@intgobble%
   \else\ifx\@nextcs\nolimits
      \typeout{Found nolimits}%
      \temptoksc={\nolimits}%
	  \let\@next\@intgobble%
   \else
      \typeout{Did not find limits or no limits}%
      \temptoksc={}%
      \let\@next\msi@limits%
   \fi\fi
   \@next   
}%
\def\@intgobble#1{%
   \typeout{arg is #1}%
   \msi@limits
}
\def\msi@limits{%
   \temptoksa={}%
   \temptoksb={}%
   \@ifnextchar_{\@limitsa}{\@limitsb}%
}
\def\@limitsa_#1{%
   \temptoksa={#1}%
   \@ifnextchar^{\@limitsc}{\@temp}%
}
\def\@limitsb{%
   \@ifnextchar^{\@limitsc}{\@temp}%
}
\def\@limitsc^#1{%
   \temptoksb={#1}%
   \@ifnextchar_{\@limitsd}{\@temp}%
}
\def\@limitsd_#1{%
   \temptoksa={#1}%
   \@temp
}
\def\dint{\msi@int\displaystyle\int}%
\def\diint{\msi@int\displaystyle\iint}%
\def\diiint{\msi@int\displaystyle\iiint}%
\def\diiiint{\msi@int\displaystyle\iiiint}%
\def\didotsint{\msi@int\displaystyle\idotsint}%
\def\doint{\msi@int\displaystyle\oint}%
\def\ExitTCILatex{\makeatother }
\if@compatibility\message{amsmath already loaded}\fi\aftergroup\ExitTCILatex}
\if@compatibility\message{amstex already loaded}\fi\aftergroup\ExitTCILatex}
\if@compatibility\message{amsgen already loaded}\fi\aftergroup\ExitTCILatex}
\let\DOTSI\relax
\def\RIfM@{\relax\ifmmode}%
\def\FN@{\futurelet\next}%
\def\iint{\DOTSI\intno@\tw@\FN@\ints@}%
\def\iiint{\DOTSI\intno@\thr@@\FN@\ints@}%
\def\iiiint{\DOTSI\intno@4 \FN@\ints@}%
\def\idotsint{\DOTSI\intno@\z@\FN@\ints@}%
\def\ints@{\findlimits@\ints@@}%
\newif\iflimtoken@
\newif\iflimits@
\def\findlimits@{\limtoken@true\ifx\next\limits\limits@true
 \else\ifx\next\nolimits\limits@false\else
 \limtoken@false\ifx\ilimits@\nolimits\limits@false\else
 \ifinner\limits@false\else\limits@true\fi\fi\fi\fi}%
\def\multint@{\int\ifnum\intno@=\z@\intdots@                          
 \else\intkern@\fi                                                    
 \ifnum\intno@>\tw@\int\intkern@\fi                                   
 \ifnum\intno@>\thr@@\int\intkern@\fi                                 
 \int}
\def\multintlimits@{\intop\ifnum\intno@=\z@\intdots@\else\intkern@\fi
 \ifnum\intno@>\tw@\intop\intkern@\fi
 \ifnum\intno@>\thr@@\intop\intkern@\fi\intop}%
\def\intic@{%
    \mathchoice{\hskip.5em}{\hskip.4em}{\hskip.4em}{\hskip.4em}}%
\def\negintic@{\mathchoice
 {\hskip-.5em}{\hskip-.4em}{\hskip-.4em}{\hskip-.4em}}%
\def\ints@@{\iflimtoken@                                              
 \def\ints@@@{\iflimits@\negintic@
   \mathop{\intic@\multintlimits@}\limits                             
  \else\multint@\nolimits\fi                                          
  \eat@}
 \else                                                                
 \def\ints@@@{\iflimits@\negintic@
  \mathop{\intic@\multintlimits@}\limits\else
  \multint@\nolimits\fi}\fi\ints@@@}%
\def\intkern@{\mathchoice{\!\!\!}{\!\!}{\!\!}{\!\!}}%
\def\plaincdots@{\mathinner{\cdotp\cdotp\cdotp}}%
\def\intdots@{\mathchoice{\plaincdots@}%
 {{\cdotp}\mkern1.5mu{\cdotp}\mkern1.5mu{\cdotp}}%
 {{\cdotp}\mkern1mu{\cdotp}\mkern1mu{\cdotp}}%
 {{\cdotp}\mkern1mu{\cdotp}\mkern1mu{\cdotp}}}%
\def\RIfM@{\relax\protect\ifmmode}
\def\text{\RIfM@\expandafter\text@\else\expandafter\mbox\fi}
\let\nfss@text\text
\def\text@#1{\mathchoice
   {\textdef@\displaystyle\f@size{#1}}%
   {\textdef@\textstyle\tf@size{\firstchoice@false #1}}%
   {\textdef@\textstyle\sf@size{\firstchoice@false #1}}%
   {\textdef@\textstyle \ssf@size{\firstchoice@false #1}}%
   \glb@settings}
\def\textdef@#1#2#3{\hbox{{%
                    \everymath{#1}%
                    \let\f@size#2\selectfont
                    #3}}}
\newif\iffirstchoice@
\def\Let@{\relax\iffalse{\fi\let\\=\cr\iffalse}\fi}%
\def\vspace@{\def\vspace##1{\crcr\noalign{\vskip##1\relax}}}%
\def\multilimits@{\bgroup\vspace@\Let@
 \baselineskip\fontdimen10 \scriptfont\tw@
 \advance\baselineskip\fontdimen12 \scriptfont\tw@
 \lineskip\thr@@\fontdimen8 \scriptfont\thr@@
 \lineskiplimit\lineskip
 \vbox\bgroup\ialign\bgroup\hfil$\m@th\scriptstyle{##}$\hfil\crcr}%
\def\Sb{_\multilimits@}%
\def\endSb{\crcr\egroup\egroup\egroup}%
\def\Sp{^\multilimits@}%
\newdimen\ex@
\def\rightarrowfill@#1{$#1\m@th\mathord-\mkern-6mu\cleaders
 \hbox{$#1\mkern-2mu\mathord-\mkern-2mu$}\hfill
 \mkern-6mu\mathord\rightarrow$}%
\def\leftarrowfill@#1{$#1\m@th\mathord\leftarrow\mkern-6mu\cleaders
 \hbox{$#1\mkern-2mu\mathord-\mkern-2mu$}\hfill\mkern-6mu\mathord-$}%
\def\leftrightarrowfill@#1{$#1\m@th\mathord\leftarrow
\mkern-6mu\cleaders
 \hbox{$#1\mkern-2mu\mathord-\mkern-2mu$}\hfill
 \mkern-6mu\mathord\rightarrow$}%
\def\overrightarrow{\mathpalette\overrightarrow@}%
\def\overrightarrow@#1#2{\vbox{\ialign{##\crcr\rightarrowfill@#1\crcr
 \noalign{\kern-\ex@\nointerlineskip}$\m@th\hfil#1#2\hfil$\crcr}}}%
\def\overleftarrow{\mathpalette\overleftarrow@}%
\def\overleftarrow@#1#2{\vbox{\ialign{##\crcr\leftarrowfill@#1\crcr
 \noalign{\kern-\ex@\nointerlineskip}$\m@th\hfil#1#2\hfil$\crcr}}}%
\def\overleftrightarrow{\mathpalette\overleftrightarrow@}%
\def\overleftrightarrow@#1#2{\vbox{\ialign{##\crcr
   \leftrightarrowfill@#1\crcr
 \noalign{\kern-\ex@\nointerlineskip}$\m@th\hfil#1#2\hfil$\crcr}}}%
\def\underrightarrow{\mathpalette\underrightarrow@}%
\def\underrightarrow@#1#2{\vtop{\ialign{##\crcr$\m@th\hfil#1#2\hfil
  $\crcr\noalign{\nointerlineskip}\rightarrowfill@#1\crcr}}}%
\def\underleftarrow{\mathpalette\underleftarrow@}%
\def\underleftarrow@#1#2{\vtop{\ialign{##\crcr$\m@th\hfil#1#2\hfil
  $\crcr\noalign{\nointerlineskip}\leftarrowfill@#1\crcr}}}%
\def\underleftrightarrow{\mathpalette\underleftrightarrow@}%
\def\underleftrightarrow@#1#2{\vtop{\ialign{##\crcr$\m@th
  \hfil#1#2\hfil$\crcr
 \noalign{\nointerlineskip}\leftrightarrowfill@#1\crcr}}}%
\def\qopnamewl@#1{\mathop{\operator@font#1}\nlimits@}
\let\nlimits@\displaylimits
\def\setboxz@h{\setbox\z@\hbox}
\def\varlim@#1#2{\mathop{\vtop{\ialign{##\crcr
 \hfil$#1\m@th\operator@font lim$\hfil\crcr
 \noalign{\nointerlineskip}#2#1\crcr
 \noalign{\nointerlineskip\kern-\ex@}\crcr}}}}
 \def\rightarrowfill@#1{\m@th\setboxz@h{$#1-$}\ht\z@\z@
  $#1\copy\z@\mkern-6mu\cleaders
  \hbox{$#1\mkern-2mu\box\z@\mkern-2mu$}\hfill
  \mkern-6mu\mathord\rightarrow$}
\def\leftarrowfill@#1{\m@th\setboxz@h{$#1-$}\ht\z@\z@
  $#1\mathord\leftarrow\mkern-6mu\cleaders
  \hbox{$#1\mkern-2mu\copy\z@\mkern-2mu$}\hfill
  \mkern-6mu\box\z@$}
\def\projlim{\qopnamewl@{proj\,lim}}
\def\injlim{\qopnamewl@{inj\,lim}}
\def\varinjlim{\mathpalette\varlim@\rightarrowfill@}
\def\varprojlim{\mathpalette\varlim@\leftarrowfill@}
\def\varliminf{\mathpalette\varliminf@{}}
\def\varliminf@#1{\mathop{\underline{\vrule\@depth.2\ex@\@width\z@
   \hbox{$#1\m@th\operator@font lim$}}}}
\def\varlimsup{\mathpalette\varlimsup@{}}
\def\varlimsup@#1{\mathop{\overline
  {\hbox{$#1\m@th\operator@font lim$}}}}
\def\align{\@verbatim \frenchspacing\@vobeyspaces \@alignverbatim
You are using the "align" environment in a style in which it is not defined.}
\let\csname endalign*\endcsname =\endtrivlist
\def\alignat{\@verbatim \frenchspacing\@vobeyspaces \@alignatverbatim
You are using the "alignat" environment in a style in which it is not defined.}
\let\csname endalignat*\endcsname =\endtrivlist
\def\xalignat{\@verbatim \frenchspacing\@vobeyspaces \@xalignatverbatim
You are using the "xalignat" environment in a style in which it is not defined.}
\let\csname endxalignat*\endcsname =\endtrivlist
\def\gather{\@verbatim \frenchspacing\@vobeyspaces \@gatherverbatim
You are using the "gather" environment in a style in which it is not defined.}
\let\csname endgather*\endcsname =\endtrivlist
\def\multiline{\@verbatim \frenchspacing\@vobeyspaces \@multilineverbatim
You are using the "multiline" environment in a style in which it is not defined.}
\let\csname endmultiline*\endcsname =\endtrivlist
\def\arrax{\@verbatim \frenchspacing\@vobeyspaces \@arraxverbatim
You are using a type of "array" construct that is only allowed in AmS-LaTeX.}
\def\tabulax{\@verbatim \frenchspacing\@vobeyspaces \@tabulaxverbatim
You are using a type of "tabular" construct that is only allowed in AmS-LaTeX.}
\let\csname endarrax*\endcsname =\endtrivlist
\let\csname endtabulax*\endcsname =\endtrivlist
 \def\endequation{%
     \ifmmode\ifinner 
      \iftag@
        \addtocounter{equation}{-1} 
        $\hfil
           \displaywidth\linewidth\@taggnum\egroup \endtrivlist
        \global\tag@false
        \global\@ignoretrue   
      \else
        $\hfil
           \displaywidth\linewidth\@eqnnum\egroup \endtrivlist
        \global\tag@false
        \global\@ignoretrue 
      \fi
     \else   
      \iftag@
        \addtocounter{equation}{-1} 
        \eqno \hbox{\@taggnum}
        \global\tag@false%
        $$\global\@ignoretrue
      \else
        \eqno \hbox{\@eqnnum}
        $$\global\@ignoretrue
      \fi
     \fi\fi
 } 
 \newif\iftag@ \tag@false
 \def\TCItag{\@ifnextchar*{\@TCItagstar}{\@TCItag}}
 \def\@TCItag#1{%
     \global\tag@true
     \global\def\@taggnum{(#1)}%
     \global\def\@currentlabel{#1}}
 \def\@TCItagstar*#1{%
     \global\tag@true
     \global\def\@taggnum{#1}%
     \global\def\@currentlabel{#1}}
     \def\tag{\@ifnextchar*{\@tagstar}{\@tag}}
     \def\@tag#1{%
         \global\tag@true
         \global\def\@taggnum{(#1)}}
     \def\@tagstar*#1{%
         \global\tag@true
         \global\def\@taggnum{#1}}
\begin{document}

\title{Regular Sequences of Quasi-Nonexpansive Operators and Their
Applications}
\author{Andrzej Cegielski\thanks{%
Faculty of Mathematics, Computer Science and Econometrics, University of
Zielona Gora, ul. Szafrana 4a, 65-516 Zielona Gora, Poland, e-mail:
a.cegielski@wmie.uz.zgora.pl}, Simeon Reich\thanks{%
Department of Mathematics, The Technion - Israel Institute of Technology,
3200003 Haifa, Israel, e-mail: sreich@technion.ac.il} \ and Rafa\l\ Zalas%
\thanks{%
Department of Mathematics, The Technion - Israel Institute of Technology,
3200003 Haifa, Israel, e-mail: rzalas@technion.ac.il} }
\maketitle

\begin{abstract}
In this paper we present a systematic study of regular sequences of
quasi-nonexpansive operators in Hilbert space. We are interested, in
particular, in weakly, boundedly and linearly regular sequences of
operators. We show that the type of the regularity is preserved under
relaxations, convex combinations and products of operators. Moreover, in
this connection, we show that weak, bounded and linear regularity lead to
weak, strong and linear convergence, respectively, of various iterative
methods. This applies, in particular, to block iterative and string
averaging projection methods, which, in principle, are based on the
above-mentioned algebraic operations applied to projections. Finally, we
show an application of regular sequences of operators to variational
inequality problems.

\smallskip \noindent \textbf{Key words and phrases:} Convex feasibility
problem, demi-closed operator, linear rate of convergence, metric
projection, regular family of sets, subgradient projection, variational
inequality.

\smallskip \noindent \textbf{2010 Mathematics Subject Classification:}
41A25, 47J25, 41A28, 65K15. 
\end{abstract}

\section{Introduction}

\label{sec:intro} Let $\mathcal{H}$ be a real Hilbert space equipped with
inner product $\langle \cdot ,\cdot \rangle $ and induced norm $\left\Vert
\cdot \right\Vert $. We denote by $\limfunc{Fix}U:= \{x\in \mathcal{H}\mid
Ux=x\}$ the \textit{fixed point set} of an operator $U\colon\mathcal{H}%
\rightarrow \mathcal{H}$. We recall that for given closed and convex sets $%
C_i\subseteq\mathcal{H}$, $i=1,\ldots, m$, the \textit{convex feasibility
problem} (CFP) is to find a point $x$ in $C:=\bigcap_{i=1}^m C_i$. In this
paper we assume that the CFP is consistent, that is, $C\neq\emptyset$.

\textbf{Motivation.} Below we formulate a prototypical convergence theorem
for the methods of cyclic and simultaneous projections:

\begin{theorem}[\protect\cite{BB96}]
\label{intro:th:basic} Let $U:=\prod_{i=1}^m P_{C_i}$ or $U:=\frac 1 m
\sum_{i=1}^m P_{C_i}$ and for each $k=0,1,2\ldots,$ let $x^{k+1}:=Ux^k$,
where $x^0\in\mathcal{H}$. Then:

\begin{enumerate}
\item[(i)] $x^k$ converges weakly to some point $x^*\in C$.

\item[(ii)] If the family of sets $\{C_1,\ldots,C_m\}$ is boundedly regular,
then the convergence is in norm.

\item[(iii)] If the family of sets $\{C_1,\ldots,C_m\}$ is boundedly
linearly regular, then the convergence is linear.
\end{enumerate}
\end{theorem}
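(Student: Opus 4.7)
The plan is to treat both choices of $U$ uniformly by exploiting that each $P_{C_i}$ is firmly nonexpansive (FNE). In the simultaneous case, $U=\tfrac{1}{m}\sum_{i=1}^{m} P_{C_i}$ is itself FNE as a convex combination of FNE operators, while in the cyclic case, $U=P_{C_m}\cdots P_{C_1}$ is nonexpansive as a composition; in both cases $\limfunc{Fix}U=C$ under the consistency assumption. Hence the orbit $(x^k)$ is Fej\'er monotone with respect to $C$, which yields boundedness of $(x^k)$, monotonicity of $d(x^k,C)$, and the fact that $(P_C x^k)$ is Cauchy in norm.

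The key quantitative ingredient I would establish next is the estimate $\|x^k-P_{C_i}x^k\|\to 0$ for every $i=1,\dots,m$. In the simultaneous case, the FNE inequality applied to each $P_{C_i}$ at $c\in C$ and then averaged gives $\|x^{k+1}-c\|^2\le \|x^k-c\|^2-\tfrac{1}{m}\sum_{i=1}^{m}\|x^k-P_{C_i}x^k\|^2$, so telescoping in $k$ yields summability, hence convergence to zero, of each $\|x^k-P_{C_i}x^k\|$. In the cyclic case, applying the FNE inequality to each partial composite $y^k_i:=P_{C_i}\cdots P_{C_1}x^k$ telescopes to $\sum_k\sum_{i=1}^{m}\|y^k_i-y^k_{i-1}\|^2<\infty$; the triangle inequality combined with the nonexpansiveness of the intermediate projections then propagates this decay back to $\|x^k-P_{C_i}x^k\|\to 0$.

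Given this ingredient, part (i) follows by a standard Opial-type argument: any weak cluster point $x^*$ of $(x^k)$ belongs to each $C_i$ by demi-closedness at zero of $I-P_{C_i}$, hence $x^*\in C$, and Fej\'er monotonicity forces such a cluster point to be unique. For (ii), bounded regularity applied to the bounded orbit converts $\max_i\|x^k-P_{C_i}x^k\|\to 0$ into $d(x^k,C)\to 0$; combined with the Cauchy property of $(P_C x^k)$, this upgrades the weak limit from (i) to a norm limit. For (iii), bounded linear regularity furnishes $\kappa>0$ such that $d(x,C)\le \kappa\max_i d(x,C_i)$ on the bounded set containing the orbit. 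Chaining this with the single-step estimate from the previous paragraph, evaluated at $c=P_C x^k$, produces a recursion $d(x^{k+1},C)^2\le q\, d(x^k,C)^2$ for some $q\in[0,1)$, which together with Fej\'er monotonicity (and a standard geometric-series summation $\|x^k-x^*\|\le 2\sum_{j\ge k}d(x^j,C)$-type bound) yields the claimed linear rate for $(x^k)\to x^*$.

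The main technical obstacle is the summability step in the cyclic case, because $U=P_{C_m}\cdots P_{C_1}$ is only nonexpansive (not FNE), so one has to peel off the projections one at a time and carefully propagate the telescoped bound on $\|y^k_i-y^k_{i-1}\|$ back to $\|x^k-P_{C_i}x^k\|$ itself. Once this step is in place, each of (i)--(iii) reduces to combining Fej\'er monotonicity with the appropriate type of regularity of the family $\{C_1,\dots,C_m\}$, and indeed the rest of the paper is organized around abstracting exactly these three pairings into the notions of weak, bounded, and linear regularity of sequences of quasi-nonexpansive operators.
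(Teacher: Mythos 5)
Your proof is correct and, in substance, it is exactly the route this paper codifies: the paper does not reprove Theorem \ref{intro:th:basic} itself (it is quoted from \cite{BB96}), but obtains it as the constant-sequence special case of its general machinery, whose ingredients --- the cutter/SQNE descent inequalities of Facts \ref{f-5} and \ref{f-6}, Fej\'{e}r monotonicity via Fact \ref{f-7}, demi-closedness for (i), and the transfer of (linear) regularity of the family $\{C_1,\ldots,C_m\}$ to the operator $U$ for (ii)--(iii) (Corollaries \ref{c-Rk1b} and \ref{c-Rk2b} feeding into Theorem \ref{th:main}) --- are precisely the steps you carry out directly. The only point worth spelling out in the cyclic case of (iii) is the Cauchy--Schwarz estimate $d^{2}(x^{k},C_{i})\leq m\sum_{j=1}^{m}\Vert y_{j}^{k}-y_{j-1}^{k}\Vert ^{2}$ for your partial composites $y_{j}^{k}$, which converts the telescoped bound into the one-step recursion $d^{2}(x^{k+1},C)\leq \bigl(1-\tfrac{1}{m\kappa ^{2}}\bigr)d^{2}(x^{k},C)$; this is routine and is the same device used in the proof of Theorem \ref{t-Rk2}(iii).
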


It is not difficult to see that both algorithmic operators $U$ in the above
theorem, due to the demi-closedness of $U-\limfunc{Id}$ at 0 \cite[Theorem 1]%
{Opi67}, for each $\{x^{k}\}_{k=0}^{\infty }\subseteq \mathcal{H}$ and $%
\{n_{k}\}_{k=0}^{\infty }\subseteq \{k\}_{k=0}^{\infty }$, satisfy 
\begin{equation}
\left. 
\begin{array}{l}
x^{n_{k}}\rightharpoonup y \\ 
Ux^{k}-x^{k}\rightarrow 0%
\end{array}%
\right\} \quad \Longrightarrow \quad y\in \limfunc{Fix}U,  \label{intro:WR}
\end{equation}%
where $\limfunc{Fix}U=C$. Moreover, note that in case (ii), by \cite[%
Theorems 4.10 and 4.11]{CZ14}, we have 
\begin{equation}
\lim_{k\rightarrow \infty }\Vert Ux^{k}-x^{k}\Vert =0\quad \Longrightarrow
\quad \lim_{k\rightarrow \infty }d(x^{k},\limfunc{Fix}U)=0,  \label{intro:BR}
\end{equation}%
which holds for any bounded sequence $\{x^{k}\}_{k=0}^{\infty }\subseteq 
\mathcal{H}$. Finally, in case (iii), we have observed, as will be shown
below, that for any bounded subset $S\subseteq \mathcal{H}$, there is $%
\delta >0$ such that for all $x\in S$, we have 
\begin{equation}
d(x,\limfunc{Fix}U)\leq \delta \Vert Ux-x\Vert \text{.}  \label{intro:LR}
\end{equation}%
It turns out that, in principle, conditions \eqref{intro:WR}, %
\eqref{intro:BR} and \eqref{intro:LR} are intrinsic abstract properties of $%
U $ which, when combined with the strong quasi-nonexpansivity, lead to weak,
strong and linear convergence; see, for example, \cite{BNP15} and \cite%
{KRZ17}. In this paper we refer to them as \textit{weak, bounded} and 
\textit{linear regularity} of the given operator $U$, respectively; see
Definition \ref{d-R}. Note that the iterative methods described in Theorem %
\ref{intro:th:basic} are static, that is, we iterate one fixed algorithmic
operator $U$. Nevertheless, in many cases, the iterative methods applied to
solving the CFPs are dynamic in the sense the algorithmic operators may
change from iteration to iteration. More precisely, one considers the
following general form of the iterative method: 
\begin{equation}
x^{0}\in \mathcal{H};\qquad x^{k+1}:=U_{k}x^{k},  \label{intro:genIter}
\end{equation}%
where for each $k=0,1,2,\ldots $, $U_{k}\colon \mathcal{H}\rightarrow 
\mathcal{H}$ is quasi-nonexpansive and satisfies $C\subseteq \limfunc{Fix}%
U_{k}$. The examples of \eqref{intro:genIter} with an extensive survey can
be found in \cite{Ceg12}; see also Example \ref{ex-BRk}. The study of
dynamic iterative methods necessitates a systematic investigation of the
abstract properties of the sequences of regular operators. The main
properties that we are interested in are related not only to convex
combination and products of regular operators, as in Theorem \ref%
{intro:th:basic}, but also to relaxation, that is, to operators of the form $%
\limfunc{Id}+\alpha (U-\limfunc{Id})$, where $\alpha \in (0,2)$. All three
of these algebraic operations are, in principle, the building bricks for
block-iterative \cite{AC89, Com96, Com97, BB96}, dynamic string averaging 
\cite{AR08, BRZ18, CZ13} and even more sophisticated algorithms, such as
modular string averaging \cite{RZ16}.

\textbf{Contribution.} The main contribution of this paper consists in
extending the notion of weakly, boundedly and linearly regular operators
described in \eqref{intro:WR}, \eqref{intro:BR} and \eqref{intro:LR} by
replacing one fixed operator $U$ with a sequence of operators $\{U_{k}\}$.
Within the framework of this extension, we provide a systematic study of
sequences of regular operators, where we establish their basic properties
and give some examples. The main result in this direction is that the the
convex combination and product operations, when applied to regular sequences
of operators, preserve the initial regularity under certain conditions; see
Theorems \ref{t-Rk1} and \ref{t-Rk2}. Although the preservation of weak \cite%
[Theorems 4.1 and 4.2]{Ceg15a} and bounded regularity \cite[Theorems 4.10
and 4.11]{CZ14} was known for one fixed operator, the preservation of linear
regularity, even in this simple case, seems to be new; see Corollaries \ref%
{c-Rk1b} and \ref{c-Rk2b}. Next, we extend Theorem \ref{intro:th:basic} by
showing that weak, bounded and linear regularity, when combined with
appropriate regularity of sets and strong quasi-nonexpasivity, lead to weak,
strong and linear convergence of the method \eqref{intro:genIter}; see
Theorems \ref{th:main} and \ref{th:main2}. Moreover, following recent work
in the field of variational inequalities \cite{Ceg15, CZ13, CZ14, GRZ15,
GRZ17}, we provide an application of regular sequences of operators in this
direction as well; see Theorem \ref{t-uku}.

\textbf{Historical overview.} The regularity properties described in %
\eqref{intro:WR}, \eqref{intro:BR} and \eqref{intro:LR} have reappeared in
the literature under various names, as we now recall.

Clearly, a weakly regular operator $U$ is an operator for which $U-\limfunc{%
Id}$ is demi-closed at 0. This type of the demi-closedness condition goes
back to the papers by Browder and Petryshyn \cite{BP66} and by Opial \cite%
{Opi67}. The term \textit{weakly regular operator} was introduced in \cite[%
Def. 12]{KRZ17}. The concept of weak regularity has recently been extended
to the \textit{fixed point closed mappings} in \cite[Lemma 2.1]{BCW14}),
where the weak convergence was replaced by the strong one. Weakly regular
sequences of operators appeared already in \cite[Sec. 2]{AK14}, where they
were called sequences satisfying \textit{condition (Z)} and applied to a
viscosity approximation process for solving variational inequalities. Weakly
regular sequences of operators were also studied in \cite{Ceg15}, where they
were introduced through sequences satisfying a \textit{demi-closedness
principle}, again, with applications to variational inequalities. Some
properties of weakly regular sequences of operators can be found in \cite%
{RZ16}.

A prototypical version of regular operator can be found in \cite[Theorem 1.2]%
{PW73} by Petryshyn and Williamson, where it was assumed, in addition, that
the operator was continuous. As far we know, the definition of boundedly
regular operators as well as their properties were first proposed by
Cegielski and Zalas in \cite[Definition 16]{CZ13} under the name \textit{%
approximately shrinking}, because of their relation to the \textit{%
quasi-shrinking} operators defined in \cite[Section 3]{YO04}. The term
``boundedly regular operator'' was proposed in \cite[Definition 7.1]{BNP15}.
Because of the relationship of boundedly/linearly regular operators to
boundedly/linearly regular families of sets (see Remark \ref{r-BRsets}), in
this paper we have replaced the term ``approximately shrinking'' by
``boundedly regular''. Many properties of these operators under the name
``approximately shrinking'' were presented in \cite{CZ14} with some
extensions in \cite{Zal14}, \cite{RZ16} and \cite{Ceg16}, and with more
applications in \cite{Ceg15} and \cite{CM16}. It is worth mentioning that
regular operators were applied even in Hadamard spaces to solving common
fixed point problems \cite{RS17}.

The phrase \textit{boundedly linearly regular} in connection to operators
was proposed by Bauschke, Noll and Phan, who applied them to establish a
linear rate of convergence for some block iterative fixed point algorithms 
\cite[Theorem 6.1]{BNP15}. To the best of our knowledge, the concept of this
type of operator goes back to Outlaw \cite[Theorem 2]{Out69}, and Petryshyn
and Williamson \cite[Corollary 2.2]{PW73}. A closely related condition
called a \textit{linearly focusing} algorithm, was studied by Bauschke and
Borwein \cite[Definition 4.8]{BB96}. The concept of a focusing algorithm
goes back to Fl\aa m and Zowe \cite[Section 2]{FZ90}, and can also be found
in \cite[Definition 1.2]{Com97} by Combettes. Linearly regular operators
appeared in \cite[Definition 3.3]{CZ14} by Cegielski and Zalas as \textit{%
linearly shrinking} ones.

We would like to mention that in the literature one can find concepts
similar to our concepts of regularity of operators; see, for example, 
\textit{H\"{o}lder regular operators }in \cite[Definition 2.4]{BLT17} or 
\textit{modulus of regularity} in \cite[Definition 3.1]{KLN17}.

\textbf{Organization of the paper.} In Section \ref{s-prel} we introduce the
reader to our notation and to basic facts regarding quasi-nonexpansive
operators, Fej\'{e}r monotone sequences and regular families of sets. In
Section \ref{s-RO} we formulate the definition of regular operators and give
several examples. In Section \ref{s-RS} we extend this definition to
sequences of operators and show their basic properties. The main properties
related to sequences, but not limited to them, are presented in Section \ref%
{s-4}. Applications to convex feasibility problems and variational
inequalities are shown in Section \ref{s-app}.

\section{Preliminaries\label{s-prel}}

\textbf{Notation. } Sequences of elements of $\mathcal{H}$ will be denoted
by $x^{k},y^{k},z^{k}$, etc. Sequences of real parameters will be usually
denoted by $\alpha _{k},\lambda _{k},\omega _{k}$ or by $\rho
_{i}^{k},\omega _{i}^{k}$, etc. Sequences of operators will be denoted by $%
\{T_{k}\}_{k=0}^{\infty },\{U_{k}\}_{k=0}^{\infty }$ or by $%
\{U_{i}^{k}\}_{k=0}^{\infty }$ etc. In order to distinguish $\rho _{i}^{k}$
and $U_{i}^{k}$ from the $k$-th power of $\rho _{i}$ and $U_{i}$, the latter
will be denoted by $(\rho _{i})^{k}$ and $(U_{i})^{k}$, respectively. We
denote the \textit{identity} operator by $\limfunc{Id}$. For a family of
operators $U_{i}:\mathcal{H}\rightarrow \mathcal{H}$, $i\in I:=\{1,2,...,m\}$%
, and an ordered set $K:=(i_{1},i_{2},...,i_{s})$, we denote $\tprod_{i\in
K}U_{i}:=U_{i_{s}}U_{i_{s-1}}...U_{i_{1}}$. For an operator $T$ and for $%
\lambda \geq 0$ we define $T_{\lambda}:=\limfunc{Id}+\lambda (T-\limfunc{Id}%
) $ and call it a $\lambda $-\textit{relaxation} of $T$, while $\lambda $ is
called the \textit{relaxation parameter}. For $\alpha \in 
\mathbb{R}
$, denote $\alpha _{+}:=\max \{0,\alpha \}$. Similarly, for a function $f:%
\mathcal{H}\rightarrow 
\mathbb{R}
$, denote $f_{+}:=\max \{0,f\}$, that is, $f_{+}(x)=[f(x)]_{+}$, $x\in 
\mathcal{H}$. For a fixed $x\in \mathcal{H}$, denote $\limfunc{Argmin}_{i\in
I}f_{i}(x)=\{j\in I \mid f_{j}(x)\leq f_{i}(x)$ for all $i\in I\}$. $\square$

Let $C\subseteq \mathcal{H}$ be nonempty, closed and convex. It is well
known that for any $x\in \mathcal{H}$, there is a unique point $y\in C$ such
that $\Vert x-y\Vert \leq \Vert x-z\Vert $ for all $z\in C$. This point is
called the \textit{metric projection} of $x$ onto $C$ and is denoted by $%
P_{C}x$. The operator $P_{C}:\mathcal{H}\rightarrow \mathcal{H}$ is
nonexpansive and $\limfunc{Fix}P_{C}=C$. Moreover, $P_{C}x$ is characterized
by: $y\in C$ and $\langle z-y,x-y\rangle \leq 0$ for all $z\in C$.

Let $f:\mathcal{H}\rightarrow 
\mathbb{R}
$ be a convex continuous function. Then for any $x\in \mathcal{H}$, there
exists a point $g_{f}(x)\in \mathcal{H}$ satisfying $\langle
g_{f}(x),y-x\rangle \leq f(y)-f(x)$ for all $y\in \mathcal{H}$. This point
is called a \textit{subgradient} of $f$ at $x$. Suppose that $%
S(f,0):=\{x:f(x)\leq 0\}\neq \emptyset $. For each $x\in \mathcal{H}$, we
fix a subgradient $g_{f}(x)\in \mathcal{H}$ and define an operator $P_{f}:%
\mathcal{H}\rightarrow \mathcal{H}$ by 
\begin{equation}
P_{f}(x):=\left\{ 
\begin{array}{ll}
x-\frac{f(x)}{\Vert g_{f}(x)\Vert ^{2}}g_{f}(x)\text{,} & \text{if }f(x)>0%
\text{,} \\ 
x\text{,} & \text{otherwise.}%
\end{array}%
\right.
\end{equation}%
In order to simplify the notation we also write $P_{f}(x)=x-\frac{f_{+}(x)}{%
\Vert g_{f}(x)\Vert ^{2}}g_{f}(x)$ for short. The operator $P_{f}$ is called
a \textit{subgradient projection}. Clearly, $\limfunc{Fix}P_{f}=S(f,0)$.

Now we recall an inequality related to convex functions in $\mathbb{R}^{n}$.

\begin{lemma}[{\protect\cite[Lemma 3.3]{Fuk84}}]
\label{l-Fuk} Let $f\colon \mathbb{R}^{n}\rightarrow \mathbb{R}$ be convex
and assume that the Slater condition is satisfied, that is, $f(z)<0$ for
some $z\in \mathbb{R}^{n}$. Then for each compact subset $K$ of $\mathbb{R}%
^{n}$, there is $\delta >0$ such that the inequality 
\begin{equation}
\delta d(x,S(f,0))\leq f_{+}(x)  \label{e-Fuk}
\end{equation}%
holds for every $x\in K$.
\end{lemma}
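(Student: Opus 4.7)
The plan is to produce the bound constructively by using the Slater point $z$ as an ``anchor'': for every $x \in K$ with $f(x) > 0$, the segment from $x$ to $z$ must cross the zero-level set, and the crossing point gives an explicit element of $S(f,0)$ close to $x$, whose distance we can then compare to $f_+(x)$.

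First I would dispose of the trivial case: if $x \in K$ satisfies $f(x) \leq 0$, then $x \in S(f,0)$, so $d(x,S(f,0)) = 0 = f_+(x)$ and \eqref{e-Fuk} holds for any $\delta$. So assume $f(x) > 0$. Since $f$ is continuous on $\mathbb{R}^n$ and $f(z) < 0 < f(x)$, the intermediate value theorem applied to $t \mapsto f((1-t)x + tz)$ on $[0,1]$ yields some $\lambda \in (0,1)$ such that the point $y := (1-\lambda)x + \lambda z$ satisfies $f(y) = 0$, hence $y \in S(f,0)$.

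Next I would exploit convexity to control $\lambda$. From $0 = f(y) \leq (1-\lambda)f(x) + \lambda f(z)$ one obtains
\begin{equation}
\lambda \leq \frac{f(x)}{f(x) - f(z)} \leq \frac{f(x)}{-f(z)} = \frac{f_+(x)}{|f(z)|},
\end{equation}
where in the middle inequality we used $f(x) > 0$ to conclude $f(x) - f(z) \geq -f(z) = |f(z)| > 0$. Consequently,
\begin{equation}
d(x,S(f,0)) \leq \|x - y\| = \lambda \|x - z\| \leq \frac{\|x-z\|}{|f(z)|}\, f_+(x).
\end{equation}

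Finally I would invoke compactness: since $K$ is compact, $M := \sup_{x \in K} \|x - z\|$ is finite, so setting $\delta := |f(z)|/M > 0$ yields $\delta\, d(x,S(f,0)) \leq f_+(x)$ for every $x \in K$, as required. There is no real obstacle here beyond making sure the argument is uniform in $x \in K$, which is guaranteed by the boundedness of $K$ and the fact that $z$ (and hence $|f(z)|$) is fixed once and for all by the Slater hypothesis.
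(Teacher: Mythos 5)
Your proof is correct. Note that the paper itself gives no argument for this lemma---it is quoted from Fukushima \cite[Lemma 3.3]{Fuk84}---so there is no in-paper proof to compare against; your segment-to-the-Slater-point construction is the standard one and is complete: continuity of the finite convex function $f$ on $\mathbb{R}^{n}$ justifies the intermediate value step, convexity yields $\lambda \leq f_{+}(x)/|f(z)|$, and compactness of $K$ enters only through $M:=\sup_{x\in K}\Vert x-z\Vert <\infty$. In fact your argument proves a bit more than the statement: the pointwise estimate $d(x,S(f,0))\leq \frac{\Vert x-z\Vert }{|f(z)|}f_{+}(x)$ holds on all of $\mathbb{R}^{n}$, so the conclusion is uniform over any bounded, not necessarily compact, set, which is exactly what is used in Example \ref{ex-subProj}(c). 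The only cosmetic point is the degenerate case $M=0$ (or $K=\emptyset$), where your formula $\delta =|f(z)|/M$ is undefined; there every $x\in K$ falls into the trivial case $f(x)\leq 0$, so any $\delta >0$ works, e.g., one may take $\delta :=|f(z)|/\max \{M,1\}$.
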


\subsection{Strongly quasi-nonexpansive operators}

In this subsection we recall the notion of a strongly quasi-nonexpansive
operator as well as several properties of these operators.

\begin{definition}
\rm\ %
We say that $T$ is $\rho $-\textit{strongly quasi-nonexpansive }($\rho $%
-SQNE), where $\rho \geq 0$, if $\limfunc{Fix}T\neq \emptyset $ and 
\begin{equation}
\left\Vert Tu-z\right\Vert ^{2}\leq \left\Vert u-z\right\Vert ^{2}-\rho
\left\Vert Tu-u\right\Vert ^{2}  \label{e-SQNE}
\end{equation}%
for all $u\in \mathcal{H}$ and all $z\in \limfunc{Fix}T$. If $\rho =0$ in (%
\ref{e-SQNE}), then $T$ is called \textit{quasi-nonexpansive }(QNE). If $%
\rho >0$ in (\ref{e-SQNE}), then we say that $T$ is \textit{strongly
quasi-nonexpansive }(SQNE).
\end{definition}

Clearly, a nonexpansive operator having a fixed point is QNE. We say that $T$
is a \textit{cutter} if $\limfunc{Fix}T\neq \emptyset $ and $\langle
x-Tx,z-Tx\rangle \leq 0$ for all $x\in \mathcal{H}$ and for all $z\in 
\limfunc{Fix}T$. Now we recall well-known facts which we employ in the
sequel.

\begin{fact}
\label{f-3}If $T$ is QNE, then $\limfunc{Fix}T$ is closed and convex.
\end{fact}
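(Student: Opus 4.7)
The plan is to establish closedness and convexity of $\limfunc{Fix}T$ separately, relying only on the quasi-nonexpansivity inequality $\|Tu - z\| \leq \|u - z\|$ for $u \in \mathcal{H}$ and $z \in \limfunc{Fix}T$ (the case $\rho = 0$ of \eqref{e-SQNE}).

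For closedness, I would take an arbitrary sequence $\{z_n\} \subseteq \limfunc{Fix}T$ converging to some $z \in \mathcal{H}$ and show $Tz = z$. The key observation is that I may substitute $u = z$ and use each $z_n$ as the fixed point in the QNE inequality, obtaining $\|Tz - z_n\| \leq \|z - z_n\|$. Letting $n \to \infty$ forces $\|Tz - z\| \leq 0$, whence $z \in \limfunc{Fix}T$. Note that this argument bypasses any need for continuity of $T$.

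For convexity, I would take $z_1, z_2 \in \limfunc{Fix}T$ and $\lambda \in [0,1]$ and put $z := \lambda z_1 + (1-\lambda)z_2$. The plan is to compare two expressions for the weighted average $\lambda \|Tz - z_1\|^2 + (1-\lambda)\|Tz - z_2\|^2$. On one hand, QNE gives the upper bound $\lambda\|z - z_1\|^2 + (1-\lambda)\|z - z_2\|^2$, which, since $\|z - z_1\| = (1-\lambda)\|z_1 - z_2\|$ and $\|z - z_2\| = \lambda\|z_1 - z_2\|$, simplifies to $\lambda(1-\lambda)\|z_1 - z_2\|^2$. On the other hand, the standard identity
\[
\lambda \|u - z_1\|^2 + (1-\lambda)\|u - z_2\|^2 = \|u - z\|^2 + \lambda(1-\lambda)\|z_1 - z_2\|^2
\]
applied with $u = Tz$ rewrites the same average as $\|Tz - z\|^2 + \lambda(1-\lambda)\|z_1 - z_2\|^2$. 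Comparing the two expressions yields $\|Tz - z\|^2 \leq 0$, so $Tz = z$.

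Neither step is genuinely difficult; the only subtlety is resisting the temptation to invoke continuity in the closedness argument (QNE operators need not be continuous in general), which is why feeding the moving $z_n$ into the QNE inequality, rather than taking a limit inside $T$, is the right move.
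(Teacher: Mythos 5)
Your proof is correct: both halves check out, and your insistence on feeding the moving fixed point $z_n$ into the QNE inequality (rather than passing a limit through $T$) is exactly the right way to avoid any continuity assumption, which QNE operators indeed need not satisfy. Note that the paper itself gives no proof of this fact; it simply refers to \cite[Section 2.1.3]{Ceg12}. The argument found there is the classical one based on the representation
\begin{equation*}
\limfunc{Fix}T=\bigcap_{x\in \mathcal{H}}\left\{ z\in \mathcal{H}:\left\Vert
Tx-z\right\Vert \leq \left\Vert x-z\right\Vert \right\} ,
\end{equation*}
where the inclusion $\subseteq $ is the QNE inequality and $\supseteq $ follows by taking $x=z$; each set on the right is either all of $\mathcal{H}$ (when $Tx=x$) or a closed half-space, since squaring the inequality turns it into an affine constraint in $z$, so the intersection is closed and convex in one stroke. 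Your route proves the two properties separately and by hand: the limit argument for closedness and the identity $\lambda \Vert u-z_{1}\Vert ^{2}+(1-\lambda )\Vert u-z_{2}\Vert ^{2}=\Vert u-z\Vert ^{2}+\lambda (1-\lambda )\Vert z_{1}-z_{2}\Vert ^{2}$ for convexity. What the half-space formulation buys is brevity and an explicit outer description of $\limfunc{Fix}T$ that is useful elsewhere; what your version buys is a self-contained elementary argument that makes visible exactly which inequality is being used at each step. Either is acceptable.
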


\begin{fact}
\label{f-0}If $T$ is a cutter, then $\left\Vert Tx-x\right\Vert \leq
\left\Vert P_{\limfunc{Fix}T}x-x\right\Vert $ for all $x\in \mathcal{H}$.
\end{fact}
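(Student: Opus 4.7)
The plan is to derive the inequality directly from the cutter inequality by expanding a single inner product. The cutter condition says that for every $x\in\mathcal{H}$ and every $z\in\limfunc{Fix}T$,
\[
\langle x-Tx,\,z-Tx\rangle \leq 0.
\]
Applying the polarization identity $2\langle a,b\rangle=\|a\|^{2}+\|b\|^{2}-\|a-b\|^{2}$ with $a=x-Tx$ and $b=z-Tx$, this is equivalent to
\[
\|x-Tx\|^{2}+\|z-Tx\|^{2}-\|x-z\|^{2}\leq 0,
\]
which in turn gives the sharper bound
\[
\|Tx-x\|^{2}\leq \|x-z\|^{2}-\|z-Tx\|^{2}\leq \|x-z\|^{2}
\]
for every $z\in\limfunc{Fix}T$.

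I would then specialize the above estimate to $z:=P_{\limfunc{Fix}T}x$, which is a legitimate choice because $\limfunc{Fix}T$ is nonempty, closed and convex (the closedness and convexity follow from the preceding Fact, since a cutter is in particular quasi-nonexpansive by the same polarization computation with $\rho=0$). This yields
\[
\|Tx-x\|\leq \|P_{\limfunc{Fix}T}x-x\|,
\]
which is exactly the desired inequality.

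There is essentially no obstacle: the whole proof is a one-line expansion of the cutter inequality followed by substitution of the projection. The only minor point to keep in mind is to justify that $P_{\limfunc{Fix}T}x$ is well defined, which is immediate from Fact \ref{f-3} combined with the standard existence and uniqueness of metric projections onto nonempty closed convex sets in a Hilbert space.
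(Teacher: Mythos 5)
Your proof is correct. Note that the paper itself does not prove this fact but defers to \cite[Section 2.1.3]{Ceg12}, so there is no in-paper argument to compare against; your polarization computation is exactly the standard route: expanding $\langle x-Tx,\,z-Tx\rangle \leq 0$ gives $\Vert Tx-x\Vert ^{2}+\Vert Tx-z\Vert ^{2}\leq \Vert x-z\Vert ^{2}$, which is precisely the $1$-SQNE inequality of Fact \ref{f-2}(iii), and then putting $z:=P_{\limfunc{Fix}T}x$ (well defined since $\limfunc{Fix}T$ is nonempty by the cutter definition and closed and convex by Fact \ref{f-3}) yields the claim. An equally short alternative is to use the characterization in Fact \ref{f-2}(ii), $\langle Tx-x,\,z-x\rangle \geq \Vert Tx-x\Vert ^{2}$, together with the Cauchy--Schwarz inequality at $z=P_{\limfunc{Fix}T}x$; both arguments are one-liners, and yours even gives the slightly stronger estimate $\Vert Tx-x\Vert ^{2}\leq d^{2}(x,\limfunc{Fix}T)-\Vert Tx-P_{\limfunc{Fix}T}x\Vert ^{2}$.
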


\begin{fact}
\label{f-2}The following conditions are equivalent:

\begin{enumerate}
\item[$\mathrm{(i)}$] $T$ is a cutter;

\item[$\mathrm{(ii)}$] $\langle Tx-x,z-x\rangle \geq \left\Vert
Tx-x\right\Vert ^{2}$ for all $x\in \mathcal{H}$ and for all $z\in \limfunc{%
Fix}T$;

\item[$\mathrm{(iii)}$] $T$ is $1$-SQNE;

\item[$\mathrm{(iv)}$] $T_{\lambda }$ is $(2-\lambda )/\lambda $-SQNE, where 
$\lambda \in (0,2]$.
\end{enumerate}
\end{fact}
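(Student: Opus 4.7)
The plan is to establish the four-way equivalence by going around the cycle (i) $\Leftrightarrow$ (ii), (ii) $\Leftrightarrow$ (iii), (iii) $\Leftrightarrow$ (iv). All of these are purely algebraic manipulations using the inner product structure; there is no genuine obstacle, only a question of choosing the right identity to expand.

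First, I would handle (i) $\Leftrightarrow$ (ii) by a direct rewriting. Fix $x \in \mathcal{H}$ and $z \in \operatorname{Fix} T$, and split $z - Tx = (z - x) + (x - Tx)$. Substituting into $\langle x - Tx, z - Tx\rangle \le 0$ gives
\begin{equation*}
\langle x - Tx, z - x\rangle - \|Tx - x\|^{2} \le 0,
\end{equation*}
which after multiplying by $-1$ is exactly (ii). The equivalence in the other direction is just reading this argument backwards.

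Next, for (ii) $\Leftrightarrow$ (iii), I would use the identity
\begin{equation*}
\|u - z\|^{2} = \|u - Tu\|^{2} + 2\langle u - Tu, Tu - z\rangle + \|Tu - z\|^{2},
\end{equation*}
which is just the expansion of $\|(u - Tu) + (Tu - z)\|^{2}$. Rewriting the cross term via $Tu - z = (Tu - u) + (u - z)$ converts this into
\begin{equation*}
\|u - z\|^{2} - \|Tu - z\|^{2} = -\|Tu - u\|^{2} + 2\langle Tu - u, z - u\rangle.
\end{equation*}
Thus the inequality $\langle Tu - u, z - u\rangle \ge \|Tu - u\|^{2}$ in (ii) is equivalent to $\|u - z\|^{2} - \|Tu - z\|^{2} \ge \|Tu - u\|^{2}$, which is exactly the $1$-SQNE inequality in (iii).

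Finally, for (iii) $\Leftrightarrow$ (iv), I would first note $\operatorname{Fix} T_{\lambda} = \operatorname{Fix} T$ for $\lambda \in (0,2]$ and that $T_{\lambda} u - u = \lambda(Tu - u)$. Writing $T_{\lambda} u = \lambda Tu + (1-\lambda) u$, the standard convexity identity for the squared norm gives
\begin{equation*}
\|T_{\lambda} u - z\|^{2} = \lambda \|Tu - z\|^{2} + (1-\lambda)\|u - z\|^{2} - \lambda(1-\lambda)\|Tu - u\|^{2}.
\end{equation*}
Assuming (iii), substitute $\|Tu - z\|^{2} \le \|u-z\|^{2} - \|Tu - u\|^{2}$ and simplify to obtain $\|T_{\lambda} u - z\|^{2} \le \|u - z\|^{2} - \lambda(2-\lambda)\|Tu - u\|^{2}$, which upon replacing $\lambda(2-\lambda)\|Tu-u\|^{2}$ by $\frac{2-\lambda}{\lambda}\|T_{\lambda} u - u\|^{2}$ is precisely the $(2-\lambda)/\lambda$-SQNE inequality of (iv). For the reverse implication, start from (iv), substitute $\|T_{\lambda} u - u\|^{2} = \lambda^{2}\|Tu - u\|^{2}$ and the convexity identity on the left-hand side, and divide by $\lambda > 0$; all terms collapse back to (iii). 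The only mildly delicate point is keeping track of the factor $\lambda$ (and treating $\lambda = 2$ correctly, where the $(2-\lambda)/\lambda$ constant degenerates to $0$, consistent with $T_{2}$ being merely QNE in the extreme case).
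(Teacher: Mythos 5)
Your proof is correct: the three equivalences (i)$\Leftrightarrow$(ii), (ii)$\Leftrightarrow$(iii) and (iii)$\Leftrightarrow$(iv) all follow from the algebraic identities you expand, the convexity identity $\Vert \lambda a+(1-\lambda)b\Vert^{2}=\lambda\Vert a\Vert^{2}+(1-\lambda)\Vert b\Vert^{2}-\lambda(1-\lambda)\Vert a-b\Vert^{2}$ being valid for every real $\lambda$, so the case $\lambda\in(1,2]$ causes no difficulty, and $\operatorname{Fix}T_{\lambda}=\operatorname{Fix}T$ for $\lambda>0$ as you note. The paper itself does not prove this Fact but refers to \cite[Section 2.1.3]{Ceg12}, and your argument is essentially the standard one given there, so there is nothing further to add.
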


For proofs of Facts \ref{f-3}--\ref{f-2}, see, for example, \cite[Section
2.1.3]{Ceg12}.

\begin{corollary}
\label{c-1}The following conditions are equivalent:

\begin{enumerate}
\item[$\mathrm{(i)}$] $T$ is QNE;

\item[$\mathrm{(ii)}$] $T_{\lambda }$ is $(1-\lambda )/\lambda $-SQNE, where 
$\lambda \in (0,1]$;

\item[$\mathrm{(iii)}$] $T_{1/2}$ is a cutter.
\end{enumerate}
\end{corollary}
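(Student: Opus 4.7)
The plan is to reduce Corollary \ref{c-1} to Fact \ref{f-2} by exploiting the composition identity $(T_{\lambda})_{\mu}=T_{\lambda \mu}$ and the proportionality $T_{\lambda}x-x=\lambda (Tx-x)$. From the latter one also gets $\limfunc{Fix}T_{\lambda}=\limfunc{Fix}T$ whenever $\lambda >0$.

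First I would prove (iii)$\Leftrightarrow $(ii). Applying the equivalence (i)$\Leftrightarrow $(iv) of Fact \ref{f-2} to the operator $T_{1/2}$ shows that $T_{1/2}$ is a cutter if and only if $(T_{1/2})_{\mu}$ is $(2-\mu )/\mu $-SQNE for every $\mu \in (0,2]$. Substituting $(T_{1/2})_{\mu}=T_{\mu/2}$ and setting $\lambda :=\mu /2\in (0,1]$, the right-hand side becomes precisely condition (ii): $T_{\lambda}$ is $(1-\lambda )/\lambda $-SQNE for all $\lambda \in (0,1]$.

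Next I would dispatch (ii)$\Rightarrow $(i) by taking $\lambda =1$, which yields that $T=T_{1}$ is $0$-SQNE, i.e.\ QNE. To close the cycle I would establish (i)$\Rightarrow $(iii) by a direct expansion: if $T$ is QNE and $z\in \limfunc{Fix}T$, then expanding $\Vert Tx-z\Vert ^{2}=\Vert (x-z)+(Tx-x)\Vert ^{2}\leq \Vert x-z\Vert ^{2}$ gives
\begin{equation*}
\langle z-x,Tx-x\rangle \geq \tfrac{1}{2}\Vert Tx-x\Vert ^{2}.
\end{equation*}
Writing $T_{1/2}x-x=\tfrac{1}{2}(Tx-x)$, this is equivalent to $\langle z-T_{1/2}x,T_{1/2}x-x\rangle \geq 0$, which is the cutter inequality for $T_{1/2}$.

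There is no real obstacle here; the only point worth stating carefully is the scaling identity $(T_{\lambda})_{\mu}=T_{\lambda \mu}$, which is the mechanism turning the $\lambda \in (0,2]$ range of Fact \ref{f-2}(iv) into the $\lambda \in (0,1]$ range in (ii) and simultaneously converting the SQNE constant from $(2-\mu )/\mu $ to $(1-\lambda )/\lambda $. Everything else is a one-line expansion or a substitution $\lambda =1$.
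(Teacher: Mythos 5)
Your proof is correct and takes the route the paper evidently intends: the corollary is stated without proof as an immediate consequence of Fact \ref{f-2}, obtained exactly via the scaling identities $(T_{\lambda})_{\mu}=T_{\lambda\mu}$, $T_{\lambda}x-x=\lambda(Tx-x)$ and $\limfunc{Fix}T_{\lambda}=\limfunc{Fix}T$ for $\lambda>0$, which convert the constant $(2-\mu)/\mu$ on $(0,2]$ into $(1-\lambda)/\lambda$ on $(0,1]$. Your extra direct expansion for (i)$\Rightarrow$(iii) is a harmless (and in fact prudent) way to close the cycle without worrying about the quantifier in Fact \ref{f-2}(iv).
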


The most important examples of cutter operators are the metric projection $%
P_{C}$ onto a nonempty, closed and convex subset $C\subseteq \mathcal{H}$
(see, e.g., \cite[Sections 1.2 and 2.2]{Ceg12}) and a subgradient projection 
$P_{f}$ related to a continuous convex function $f:\mathcal{H}\rightarrow 
\mathbb{R}
$ with $S(f,0):=\{x\in \mathcal{H}:f(x)\leq 0\}\neq \emptyset $ (see, for
instance, \cite[Section 4.2]{Ceg12}).

The following two facts play an important role in the sequel.

\begin{fact}
\label{f-5} Let $U_{i}:\mathcal{H}\rightarrow \mathcal{H}$ be $\rho _{i}$%
-SQNE, $\rho _{i}\geq 0$, $i\in I:=\{1,2,...,m\}$, with $\bigcap_{i\in I}%
\func{Fix}U_{i}\neq \emptyset $, $U:=\sum_{i\in I}\omega _{i}U_{i}$, where $%
\omega _{i}\geq 0,i\in I$, and $\sum_{\in I}\omega _{i}=1$.

\begin{enumerate}
\item[$\mathrm{(i)}$] If $\omega _{i},\rho _{i}>0$ for all $i\in I$, then $%
\limfunc{Fix}U=\bigcap_{i=1}^{m}\limfunc{Fix}U_{i}$ and $U$ is $\rho $-SQNE
with $\rho =\min_{i\in I}\rho _{i}$;

\item[$\mathrm{(ii)}$] For any $x\in \mathcal{H}$ and $z\in \bigcap_{i\in I}%
\func{Fix}U_{i}$ we have 
\begin{equation}
\Vert Ux-z\Vert ^{2}\leq \Vert x-z\Vert ^{2}-\sum_{i=1}^{m}\omega _{i}\rho
_{i}\Vert U_{i}x-x\Vert ^{2}\text{.}  \label{e-SQNE1}
\end{equation}

\item[$\mathrm{(iii)}$] For any $z\in \bigcap_{i\in I}\limfunc{Fix}U_i$, $%
x\in\mathcal{H}$ and positive $R\geq \Vert x-z\Vert$, we have 
\begin{equation}
\frac{1}{2R}\sum_{i=1}^{m}\omega _{i}\rho _{i}\Vert U_{i}x-x\Vert ^{2}\leq
\Vert Ux-x\Vert \text{.}  \label{e-SQNE2}
\end{equation}
\end{enumerate}
\end{fact}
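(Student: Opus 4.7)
My plan is to establish part (ii) first since it underlies both (i) and (iii), and then derive (i) and (iii) as consequences.

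For (ii), the key observation is that $Ux - z = \sum_{i=1}^m \omega_i (U_i x - z)$ is a convex combination, so Jensen's inequality (applied to the convex function $\|\cdot\|^2$) gives
\[
\|Ux - z\|^2 \;\leq\; \sum_{i=1}^m \omega_i \|U_i x - z\|^2.
\]
I would then apply the $\rho_i$-SQNE inequality \eqref{e-SQNE} to each term $\|U_i x - z\|^2 \leq \|x - z\|^2 - \rho_i \|U_i x - x\|^2$ and sum, using $\sum_i \omega_i = 1$ to collapse the $\|x-z\|^2$ contribution.

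For (i), the inclusion $\bigcap_i \func{Fix} U_i \subseteq \func{Fix} U$ is immediate. For the reverse, take $x \in \func{Fix} U$ and any $z \in \bigcap_i \func{Fix} U_i$; substituting into (ii) gives $\sum_i \omega_i \rho_i \|U_i x - x\|^2 \leq 0$, and since all $\omega_i, \rho_i > 0$, each $\|U_i x - x\| = 0$, placing $x$ in the intersection. For the $\rho$-SQNE property with $\rho := \min_i \rho_i$, I apply Jensen once more to bound
\[
\|Ux - x\|^2 \;=\; \Bigl\|\sum_i \omega_i (U_i x - x)\Bigr\|^2 \;\leq\; \sum_i \omega_i \|U_i x - x\|^2,
\]
and combine with (ii) via $\rho \|Ux - x\|^2 \leq \rho \sum_i \omega_i \|U_i x - x\|^2 \leq \sum_i \omega_i \rho_i \|U_i x - x\|^2$.

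For (iii), I would start from (ii) and rewrite the right-hand side as a difference of squares:
\[
\sum_{i=1}^m \omega_i \rho_i \|U_i x - x\|^2 \;\leq\; \|x-z\|^2 - \|Ux - z\|^2 \;=\; (\|x-z\| - \|Ux-z\|)(\|x-z\| + \|Ux-z\|).
\]
The triangle inequality bounds the first factor by $\|Ux - x\|$. For the second factor, I would use that $U$ is QNE (with $z$ fixed, by part (ii) with all $\rho_i \geq 0$) to get $\|Ux - z\| \leq \|x - z\| \leq R$, whence the second factor is at most $2R$. Dividing by $2R$ yields the claim.

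The arguments are routine; the only minor subtlety is ensuring that in (iii) one does not need $\omega_i, \rho_i > 0$ nor $z \in \func{Fix} U$ — everything goes through with $z \in \bigcap_i \func{Fix} U_i$ and $\rho_i \geq 0$, since $U$ inherits quasi-nonexpansivity with respect to this intersection directly from (ii). No single step is a genuine obstacle.
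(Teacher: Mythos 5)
Your proof is correct, and it is worth noting that the paper itself does not prove this fact at all: it simply cites \cite[Theorems 2.1.26(i) and 2.1.50]{Ceg12} for part (i) and \cite[Proposition 4.5]{CZ14} for parts (ii)--(iii), adding only the remark that the cited proof goes through when $\rho_i\geq 0$ rather than $\rho_i>0$. Your self-contained argument is the standard one underlying those references and it is sound: (ii) follows from convexity of $\Vert\cdot\Vert^2$ applied to $Ux-z=\sum_i\omega_i(U_ix-z)$ together with the SQNE inequality for each $U_i$; (i) follows from (ii) by testing $x\in\limfunc{Fix}U$ against any $z$ in the (nonempty) intersection and then using a second application of Jensen, $\Vert Ux-x\Vert^2\leq\sum_i\omega_i\Vert U_ix-x\Vert^2$, combined with $\rho\leq\rho_i$; and (iii) follows from (ii) by factoring $\Vert x-z\Vert^2-\Vert Ux-z\Vert^2$ as a difference of squares, bounding the first factor by $\Vert Ux-x\Vert$ via the triangle inequality and the second by $2R$ via the quasi-nonexpansivity of $U$ with respect to $z$ (which itself is immediate from (ii) with $\rho_i\geq 0$). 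Your closing observation that (iii) needs neither $\omega_i,\rho_i>0$ nor $z\in\limfunc{Fix}U$ is exactly the point the paper flags about extending the cited proposition from $\rho>0$ to $\rho\geq 0$, so your writeup in fact supplies the verification the paper leaves to the reader.
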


\begin{proof}
For (i), see \cite[Theorems 2.1.26(i) and 2.1.50]{Ceg12}. Parts (ii) and
(iii) were proved in \cite[Proposition 4.5]{CZ14} in the case where $\rho >0$%
, but it follows from the proof that the statement is also true if $\rho
\geq 0$.
\end{proof}

\begin{fact}
\label{f-6} Let $U_{i}:\mathcal{H}\rightarrow \mathcal{H}$ be $\rho _{i}$%
-SQNE, $\rho _{i}\geq 0$, $i\in I:=\{1,2,...,m\}$, with $\bigcap_{i\in I}%
\func{Fix}U_{i}\neq \emptyset $, and let $U:=U_{m}U_{m-1}...U_{1}$.

\begin{enumerate}
\item[$\mathrm{(i)}$] If $\rho =\min_{i\in I}\rho _{i}>0$, then $\limfunc{Fix%
}U=\bigcap_{i=1}^{m}\limfunc{Fix}U_{i}$ and $U:=U_{m}U_{m-1}...U_{1}$ is $%
\rho /m$-SQNE;

\item[$\mathrm{(ii)}$] For any $x\in \mathcal{H}$ and $z\in \bigcap_{i\in I}%
\func{Fix}U_{i}$ we have 
\begin{equation}
\Vert Ux-z\Vert ^{2}\leq \Vert x-z\Vert ^{2}-\sum_{i=1}^{m}\rho _{i}\Vert
Q_{i}x-Q_{i-1}x\Vert ^{2}\text{,}  \label{e-SQNE3}
\end{equation}%
where $Q_{i}:=U_{i}U_{i-1}\ldots U_{1}$, $i\in I$, $Q_{0}:=\func{Id}$.

\item[$\mathrm{(iii)}$] For any $z\in \bigcap_{i\in I}\limfunc{Fix}U_i$, $%
x\in\mathcal{H}$ and positive $R\geq \Vert x-z\Vert$, we have 
\begin{equation}
\frac{1}{2R}\sum_{i=1}^{m}\rho _{i}\Vert Q_{i}x-Q_{i-1}x\Vert ^{2}\leq \Vert
Ux-x\Vert \text{.}  \label{e-SQNE4}
\end{equation}
\end{enumerate}
\end{fact}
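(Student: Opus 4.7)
The plan is to prove (ii) first by telescoping, then derive (i) from (ii) via a Cauchy--Schwarz step, and finally derive (iii) from (ii) via a difference-of-squares factorization.

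For (ii), I would apply the $\rho_i$-SQNE inequality of $U_i$ at the point $Q_{i-1}x$, with the common fixed point $z$. Since $U_i Q_{i-1}x = Q_i x$, this reads
\[
\|Q_i x - z\|^2 \le \|Q_{i-1}x - z\|^2 - \rho_i \|Q_i x - Q_{i-1}x\|^2, \qquad i=1,\ldots,m.
\]
Summing telescopically from $i=1$ to $m$ and using $Q_0 = \limfunc{Id}$, $Q_m = U$ yields (ii) directly.

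For (i), note first that any $x \in \bigcap_i \limfunc{Fix}U_i$ obviously satisfies $Ux = x$, so $\bigcap_i \limfunc{Fix}U_i \subseteq \limfunc{Fix}U$ and $U$ is QNE on this common fixed point set. Conversely, if $Ux = x$, pick any $z \in \bigcap_i \limfunc{Fix}U_i$; then (ii) gives $\sum_i \rho_i \|Q_i x - Q_{i-1}x\|^2 \le 0$, and since $\rho_i \ge \rho > 0$, we get $Q_i x = Q_{i-1}x$ for every $i$. Starting from $Q_0 x = x$, induction on $i$ then shows $x = Q_{i-1}x = Q_i x = U_i x$, so $x \in \limfunc{Fix}U_i$ for all $i$. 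For the SQNE constant $\rho/m$, I would apply Cauchy--Schwarz (or the power-mean inequality) to $Ux - x = \sum_{i=1}^m (Q_i x - Q_{i-1}x)$:
\[
\|Ux - x\|^2 \le m \sum_{i=1}^m \|Q_i x - Q_{i-1}x\|^2,
\]
and combine with (ii) using $\rho_i \ge \rho$ to conclude $\|Ux-z\|^2 \le \|x-z\|^2 - (\rho/m)\|Ux-x\|^2$.

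For (iii), since $U$ is QNE (as a composition of QNE operators with a common fixed point), we have $\|Ux - z\| \le \|x-z\| \le R$. Factoring the right-hand side of (ii) as
\[
\|x-z\|^2 - \|Ux - z\|^2 = \bigl(\|x-z\| - \|Ux-z\|\bigr)\bigl(\|x-z\| + \|Ux-z\|\bigr),
\]
and bounding the first factor by $\|Ux-x\|$ via the reverse triangle inequality and the second factor by $2R$, gives
\[
\sum_{i=1}^m \rho_i \|Q_i x - Q_{i-1}x\|^2 \le 2R\|Ux - x\|,
\]
which is (iii) after dividing by $2R$. I expect no serious obstacle here; the only delicate point is the fixed-point equality in (i), where one must be careful to induct forward along the composition rather than concluding $x \in \limfunc{Fix}U_i$ directly from $Q_{i-1}x \in \limfunc{Fix}U_i$.
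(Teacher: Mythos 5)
Your proof is correct and complete: telescoping the $\rho_i$-SQNE inequality applied to $U_i$ at $Q_{i-1}x$ gives (ii), the fixed-point argument plus the Cauchy--Schwarz bound $\Vert Ux-x\Vert^2\leq m\sum_{i=1}^m\Vert Q_ix-Q_{i-1}x\Vert^2$ gives (i), and the factorization $\Vert x-z\Vert^2-\Vert Ux-z\Vert^2=(\Vert x-z\Vert-\Vert Ux-z\Vert)(\Vert x-z\Vert+\Vert Ux-z\Vert)\leq 2R\Vert Ux-x\Vert$ gives (iii). The paper itself gives no argument, deferring to \cite[Theorems 2.1.26(ii) and 2.1.48(ii)]{Ceg12} for (i) and \cite[Proposition 4.6]{CZ14} for (ii)--(iii); your write-up is simply a self-contained rendering of the same standard argument used in those references.
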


\begin{proof}
For (i), see \cite[Theorems 2.1.26(ii) and 2.1.48(ii)]{Ceg12}. Parts (ii)
and (iii) were proved in \cite[Proposition 4.6]{CZ14} in the case where $%
\rho >0$, but it follows from the proof that the statement is also true if $%
\rho \geq 0 $.
\end{proof}

\subsection{Fej\'{e}r monotone sequences}

\begin{definition}
We say that a sequence $\{x^{k}\}_{k=0}^{\infty }$ is Fej\'{e}r monotone
with respect to a subset $C\subseteq \mathcal{H}$ if $\Vert x^{k+1}-z\Vert
\leq \Vert x^{k}-z\Vert $ for all $z\in C$ and $k=0,1,2,\ldots$.
\end{definition}

\begin{lemma}
\label{f-6a}Let $T_{k}$ be $\rho _{k}$-SQNE, $k=0,1,2,\ldots$, with $\rho
:=\inf_{k}\rho _{k}\geq 0$ and $F:=\bigcap_{k=0}^{\infty }\limfunc{Fix}%
T_{k}\neq \emptyset $, and let a sequence $\{x^{k}\}_{k=0}^{\infty }$ be
generated by $x^{k+1}=T_{k}x^{k}$, where $x^{0}\in \mathcal{H}$ is arbitrary.

\begin{enumerate}
\item[$\mathrm{(i)}$] The sequence $\{x^{k}\}_{k=0}^{\infty }$ is Fej\'{e}r
monotone with respect to $F$.

\item[$\mathrm{(ii)}$] If $\rho >0$, then $\lim_{k}\Vert
T_{k}x^{k}-x^{k}\Vert =0$.
\end{enumerate}
\end{lemma}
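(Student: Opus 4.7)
The plan is to exploit the $\rho_k$-strong quasi-nonexpansivity of each $T_k$ with respect to a fixed reference point $z \in F$. Since $F \subseteq \limfunc{Fix} T_k$ for every $k$, inequality \eqref{e-SQNE} applied to $u = x^k$ yields
\begin{equation}
\|x^{k+1} - z\|^2 = \|T_k x^k - z\|^2 \leq \|x^k - z\|^2 - \rho_k \|T_k x^k - x^k\|^2. \label{plan:sqne}
\end{equation}
This single inequality contains essentially everything we need.

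For part (i), I drop the nonnegative term $\rho_k \|T_k x^k - x^k\|^2$ on the right-hand side of \eqref{plan:sqne} to obtain $\|x^{k+1} - z\|^2 \leq \|x^k - z\|^2$, which is exactly Fejér monotonicity with respect to $F$ after taking square roots. Since $z \in F$ was arbitrary, this step is immediate.

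For part (ii), I use part (i) to conclude that for any fixed $z \in F$, the sequence $\{\|x^k - z\|\}_{k=0}^\infty$ is nonincreasing and bounded below by zero, hence convergent in $\mathbb{R}$. In particular, $\|x^k - z\|^2 - \|x^{k+1} - z\|^2 \to 0$. Returning to \eqref{plan:sqne} and using $\rho_k \geq \rho > 0$, I get
\begin{equation}
\rho \|T_k x^k - x^k\|^2 \leq \|x^k - z\|^2 - \|x^{k+1} - z\|^2 \to 0,
\end{equation}
so $\|T_k x^k - x^k\| \to 0$ as claimed.

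There is no real obstacle here; the argument is a direct telescoping consequence of \eqref{e-SQNE}, and the uniform lower bound $\rho > 0$ in (ii) is precisely what allows us to pull $\|T_k x^k - x^k\|^2$ out of the inequality with a single multiplicative constant. The only mild subtlety is that we do not need the convergence of $\{x^k\}$ itself, merely the convergence of the scalar sequence $\{\|x^k - z\|\}$, which comes for free from monotonicity.
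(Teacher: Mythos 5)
Your proof is correct and follows exactly the route the paper intends: the paper's own proof simply invokes the definitions of QNE and SQNE operators, and your argument fills in the standard details (apply \eqref{e-SQNE} with $z\in F$, drop the nonnegative term for Fej\'{e}r monotonicity, then telescope using the convergence of the nonincreasing scalar sequence $\{\Vert x^{k}-z\Vert\}$ and the bound $\rho_{k}\geq\rho>0$). No gaps.
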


\begin{proof}
Part (i) follows directly from the definition of a QNE operator, while part
(ii) follows from (i) and from the definition of an SQNE operator.
\end{proof}

\begin{fact}
\label{f-7}If a sequence $\{x^{k}\}_{k=0}^{\infty }\subseteq \mathcal{H}$ is
Fej\'{e}r monotone with respect to a nonempty subset $C\subseteq \mathcal{H}$%
, then

\begin{enumerate}
\item[$\mathrm{(i)}$] $x^{k}$ converges weakly to a point $z\in C$ if and
only if all its weak cluster points belong to $C$;

\item[$\mathrm{(ii)}$] $x^{k}$ converges strongly to a point $z\in C$ if and
only if $\lim_{k}d(x^{k},C)=0$;

\item[$\mathrm{(iii)}$] if there is a constant $q\in (0,1)$ such that $%
d(x^{k+1},C)\leq qd(x^{k},C)$ holds for all $k=0,1,2,\ldots$, then $%
\{x^{k}\}_{k=0}^{\infty }$ converges linearly to a point $z\in C$ and 
\begin{equation}
\Vert x^{k}-z\Vert \leq 2d(x^{0},C)q^{k}\text{.}
\end{equation}
\end{enumerate}
\end{fact}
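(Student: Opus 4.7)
The plan rests on two elementary consequences of Fej\'er monotonicity which will serve all three parts: the sequence $\{x^{k}\}$ is bounded (for any fixed $z_{0}\in C$, $\Vert x^{k}-z_{0}\Vert \leq \Vert x^{0}-z_{0}\Vert $), and for every $z\in C$ the real sequence $\Vert x^{k}-z\Vert $ is nonincreasing and therefore convergent. I would state these at the outset and then handle the three items separately.

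For part (i), the forward implication is immediate. For the converse, boundedness guarantees the existence of a weak cluster point, so the task reduces to showing uniqueness. Given two weak cluster points $u,v\in C$, I would use the identity
$$
\Vert x^{k}-u\Vert ^{2}-\Vert x^{k}-v\Vert ^{2}=2\langle x^{k},v-u\rangle +\Vert u\Vert ^{2}-\Vert v\Vert ^{2}
$$
to note that the left-hand side converges, hence so does $\langle x^{k},v-u\rangle $. Passing along two weakly convergent subsequences, one with limit $u$ and one with limit $v$, forces $\langle u,v-u\rangle =\langle v,v-u\rangle $, i.e. $\Vert u-v\Vert ^{2}=0$. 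Thus the weak cluster point is unique and $x^{k}\rightharpoonup z\in C$.

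For part (ii), only the ``if'' direction requires work. Given $\varepsilon >0$, I would choose $k_{0}$ with $d(x^{k_{0}},C)<\varepsilon /2$ and a point $c\in C$ with $\Vert x^{k_{0}}-c\Vert <\varepsilon /2$. Fej\'{e}r monotonicity then gives $\Vert x^{k}-c\Vert \leq \Vert x^{k_{0}}-c\Vert <\varepsilon /2$ for every $k\geq k_{0}$, so $\Vert x^{k}-x^{l}\Vert <\varepsilon $ for all $k,l\geq k_{0}$. The sequence is Cauchy, hence strongly convergent to some $z\in \mathcal{H}$; continuity of $d(\cdot ,C)$ together with $d(x^{k},C)\rightarrow 0$ gives $d(z,C)=0$, placing $z\in C$ (under the implicit assumption that $C$ is closed).

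For part (iii), iterating the contraction yields $d(x^{k},C)\leq q^{k}d(x^{0},C)\rightarrow 0$, so part (ii) supplies a strong limit $z\in C$. For the rate, I would fix $k$ and observe that for every $l\geq k$ and every $c\in C$, Fej\'{e}r monotonicity gives
$$
\Vert x^{l}-x^{k}\Vert \leq \Vert x^{l}-c\Vert +\Vert x^{k}-c\Vert \leq 2\Vert x^{k}-c\Vert .
$$
Taking the infimum over $c\in C$ and letting $l\rightarrow \infty $ produces $\Vert z-x^{k}\Vert \leq 2d(x^{k},C)\leq 2q^{k}d(x^{0},C)$. The only mildly subtle point in the whole argument is the uniqueness of the weak cluster point in (i), and it is handled by the polarization-type identity above; everything else is bookkeeping.
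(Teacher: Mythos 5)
Your proof is correct. Note that the paper itself gives no argument for this Fact: it simply cites \cite[Theorem 2.16]{BB96}, and the three arguments you supply are exactly the standard ones underlying that citation -- existence of weak cluster points from boundedness plus uniqueness via the convergence of $\langle x^{k},v-u\rangle$ for (i), the Cauchy argument from $\Vert x^{k}-c\Vert \leq \Vert x^{k_{0}}-c\Vert$ for (ii), and the estimate $\Vert x^{k}-z\Vert \leq 2d(x^{k},C)$ obtained by taking the infimum over $c\in C$ and letting $l\rightarrow \infty$ for (iii). Your caveat in (ii) is also well taken: as stated, the Fact only assumes $C$ nonempty, but without closedness the strong limit need only satisfy $d(z,C)=0$ (a one-dimensional example with $C=(0,1]$ and $x^{k}=-1/k$ shows the limit can escape $C$), so closedness of $C$ is indeed implicitly assumed -- harmlessly, since the paper only applies the Fact to closed convex sets such as fixed point sets of quasi-nonexpansive operators.
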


\begin{proof}
See \cite[Theorem 2.16(ii), (v) and (vi)]{BB96}.
\end{proof}

\begin{lemma}
\label{th:Fejer2} Let $\{x^{k}\}_{k=0}^{\infty }$ be Fej\'{e}r monotone with
respect to $C$ and let $s\in \mathbb{N}$.

\begin{enumerate}
\item[$\mathrm{(i)}$] If $x^{ks}\rightharpoonup z$ for some $z\in C$ and $%
\lim_{k}\Vert x^{k+1}-x^{k}\Vert =0$, then $x^{k}\rightharpoonup z$.

\item[$\mathrm{(ii)}$] If $x^{ks}\rightarrow z$ for some $z\in C$, then $%
x^{k}\rightarrow z$.

\item[$\mathrm{(iii)}$] If there are $c>0$, $q\in (0,1)$ and $z\in C$ such
that $\Vert x^{ks}-z\Vert \leq cq^{k}$ for each $k=0,1,2,\ldots$, then 
\begin{equation}
\Vert x^{k}-z\Vert \leq \frac{c}{(\sqrt[\scriptstyle{s}]{q})^{s-1}}\left( 
\sqrt[\scriptstyle{s}]{q}\right) ^{k}.
\end{equation}
\end{enumerate}
\end{lemma}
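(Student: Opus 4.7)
The plan is to reduce each of the three statements to elementary consequences of the Fej\'er monotonicity of $\{x^k\}_{k=0}^{\infty }$ with respect to $C$, by decomposing an arbitrary index $n$ as $n=ks+r$ with $0\leq r\leq s-1$ and controlling the ``offset'' $r$.

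For \textrm{(ii)} I would proceed first, since it is the cleanest. Fix $z\in C$ produced by the hypothesis. Fej\'er monotonicity means that the scalar sequence $\{\Vert x^{k}-z\Vert \}_{k=0}^{\infty }$ is nonincreasing. Since its subsequence $\Vert x^{ks}-z\Vert \rightarrow 0$, the whole sequence must tend to $0$, i.e.\ $x^{k}\rightarrow z$.

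For \textrm{(iii)}, write $n=ks+r$ with $k=\lfloor n/s\rfloor $ and $0\leq r\leq s-1$. Fej\'er monotonicity yields $\Vert x^{n}-z\Vert \leq \Vert x^{ks}-z\Vert \leq cq^{k}$. Now I would rewrite $q^{k}=(\sqrt[s]{q})^{ks}=(\sqrt[s]{q})^{n-r}$, so that
\begin{equation}
\Vert x^{n}-z\Vert \leq c(\sqrt[s]{q})^{n}(\sqrt[s]{q})^{-r}\leq \frac{c}{(\sqrt[s]{q})^{s-1}}(\sqrt[s]{q})^{n},
\end{equation}
the last inequality because $\sqrt[s]{q}\in (0,1)$ and $r\leq s-1$ imply $(\sqrt[s]{q})^{-r}\leq (\sqrt[s]{q})^{-(s-1)}$.

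Part \textrm{(i)} is the most delicate step, though still routine. Fej\'er monotonicity bounds the sequence, so every subsequence has a weakly convergent sub-subsequence. By Fact \ref{f-7}\textrm{(i)}, it suffices to show that $z$ is the only weak cluster point of $\{x^{k}\}_{k=0}^{\infty }$. Let $x^{n_{j}}\rightharpoonup y$ and decompose $n_{j}=k_{j}s+r_{j}$ with $r_{j}\in \{0,1,\ldots ,s-1\}$. Passing to a subsequence, we may assume $r_{j}\equiv r$ is constant. The telescoping estimate
\begin{equation}
\Vert x^{k_{j}s+r}-x^{k_{j}s}\Vert \leq \sum_{i=0}^{r-1}\Vert x^{k_{j}s+i+1}-x^{k_{j}s+i}\Vert \leq s\max_{0\leq i<r}\Vert x^{k_{j}s+i+1}-x^{k_{j}s+i}\Vert
\end{equation}
tends to $0$ by the assumption $\Vert x^{k+1}-x^{k}\Vert \rightarrow 0$, so $x^{k_{j}s}\rightharpoonup y$ as well. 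Since by hypothesis $x^{ks}\rightharpoonup z$, uniqueness of the weak limit of the subsequence forces $y=z$, and the proof is complete. The main (minor) obstacle here is handling the variable offset $r_{j}$, which is overcome simply by restricting to a constant-offset subsequence.
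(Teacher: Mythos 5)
Your proof is correct, but it is routed differently from the paper's in all three parts, and in places it is more self-contained. For (i), the paper argues directly: writing $n=\lfloor k/s\rfloor$, it telescopes $\Vert x^{k}-x^{ns}\Vert \leq \sum_{l=ns}^{k-1}\Vert x^{l+1}-x^{l}\Vert \rightarrow 0$ and concludes $x^{k}=x^{ns}+(x^{k}-x^{ns})\rightharpoonup z$ for the whole sequence at once, observing that Fej\'{e}r monotonicity is not even needed here; you use the same telescoping estimate but wrap it in a weak-cluster-point argument (pass to a constant offset $r$, identify every cluster point with $z$, then invoke boundedness and Fact \ref{f-7}(i)). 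This works, though it uses Fej\'{e}r monotonicity only to get boundedness (which already follows from $x^{ks}\rightharpoonup z$ and the telescoping), and the appeal to Fact \ref{f-7}(i) strictly gives weak convergence to \emph{some} point of $C$, so you need the one-line remark that the weak limit is itself a cluster point, hence equals $z$ — easily closed, but worth stating. For (ii), the paper cites Fact \ref{f-7}(ii) via $d(x^{k},C)\rightarrow 0$, whereas your observation that the nonincreasing scalar sequence $\Vert x^{k}-z\Vert$ has a null subsequence is more elementary and arguably cleaner. For (iii), the paper simply refers to \cite[Prop.\ 1.6]{BB96}; your two-line computation with $n=ks+r$, $\Vert x^{n}-z\Vert\leq\Vert x^{ks}-z\Vert\leq cq^{k}=c(\sqrt[s]{q})^{n-r}\leq c(\sqrt[s]{q})^{n-(s-1)}$, is a valid self-contained proof of the stated constant, which is a genuine plus.
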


\begin{proof}
Suppose that the assumptions of (i) are satisfied. Let $n=n_{k}=\lfloor 
\frac{k}{s}\rfloor :=\max \{m\mid ms\leq k\}$ and $p=k-ns$. Clearly, $%
n\rightarrow \infty $ if and only if $k\rightarrow \infty $. By the
assumption, we have 
\begin{equation}
0\leq \lim_{k}\Vert x^{k}-x^{ns}\Vert =\lim_{n}\Vert x^{k}-x^{ns}\Vert \leq
\lim_{n}\sum_{l=ns}^{ns+p-1}\Vert x^{l+1}-x^{l}\Vert =0\text{.}
\end{equation}%
This yields $\lim_{k}\Vert x^{k}-x^{ns}\Vert =0$ and $%
x^{k}=x^{ns}+(x^{k}-x^{ns})\rightharpoonup z$ as $k\rightarrow \infty $.
Note that (i) is true even without the Fej\'{e}r monotonicity of $%
\{x^{k}\}_{k=0}^{\infty }$. Part (ii) follows from Fact \ref{f-7}(ii). For a
proof of (iii), see \cite[Prop. 1.6]{BB96}.
\end{proof}

\subsection{Regular families of sets}

Below we recall the notion of regularity of a finite family of sets as well
as several properties of regular families.

\begin{definition}
\rm\ %
(\cite[Def. 5.1]{BB96}, \cite[Def. 5.7]{BNP15}) Let $S\subseteq \mathcal{H}$
be nonempty and $\mathcal{C}$ be a family of closed convex subsets $%
C_{i}\subseteq \mathcal{H}$, $i\in I:=\{1,2,...,m\}$, with $C:=\bigcap_{i\in
I}C_{i}\neq \emptyset $. We say that $\mathcal{C}$ is:

\begin{enumerate}
\item[(a)] \textit{regular }over $S$ if for any sequence $%
\{x^{k}\}_{k=0}^{\infty }\subseteq S$, we have 
\begin{equation}
\lim_{k}\max_{i\in I}d(x^{k},C_{i})=0\Longrightarrow \lim_{k}d(x^{k},C)=0%
\text{;}
\end{equation}

\item[(b)] \textit{linearly regular }over $S$ if there is a constant $\kappa
>0$ such that for every $x\in S$, we have 
\begin{equation}
d(x,C)\leq \kappa \max_{i\in I}d(x,C_{i})\text{.}
\end{equation}%
We call the constant $\kappa $ a \textit{modulus} of the linear regularity
of $\mathcal{C}$ over $S$.
\end{enumerate}

If any of the above regularity conditions holds for $S=\mathcal{H}$, then we
omit the phrase \textquotedblleft over $S$\textquotedblright . \noindent If
any of the above regularity conditions holds for every bounded subset $%
S\subseteq \mathcal{H}$, then we precede the corresponding term with the
adverb \textit{boundedly} while omitting the phrase \textquotedblleft over $%
S $\textquotedblright .
\end{definition}

The theorem below gives a small collection of sufficient conditions for a
family $\mathcal{C}$ to be (boundedly, linearly) regular.

\begin{theorem}[\protect\cite{BB96, BNP15}]
\label{t-BRsets} Let $C_{i}\subseteq \mathcal{H}$, $i\in I:=\{1,\ldots ,m\}$%
, be closed convex with $C:=\bigcap_{i\in I}C_{i}\neq \emptyset $ and let $%
\mathcal{C}:=\{C_{i}\mid i\in I\}$.

\begin{enumerate}
\item[$\mathrm{(i)}$] If $\dim \mathcal{H}<\infty $, then $\mathcal{C}$ is
boundedly regular;

\item[$\mathrm{(ii)}$] If $C_{j}\cap \limfunc{int}(\bigcap_{i\in I\setminus
\{j\}}C_{i})\neq \emptyset $, then $\mathcal{C}$ is boundedly linearly
regular;

\item[$\mathrm{(iii)}$] If all $C_{i}$, $i\in I$, are half-spaces, then $%
\mathcal{C}$ is linearly regular;

\item[$\mathrm{(iv)}$] If $\dim \mathcal{H}<\infty $, $C_{j}$ is a
half-space, $j\in J\subseteq I$, and $\bigcap_{j\in J}C_{j}\cap
\bigcap_{i\in I\setminus J}\limfunc{ri}C_{i}\neq \emptyset $, then $\mathcal{%
C}$ is boundedly linearly regular.
\end{enumerate}
\end{theorem}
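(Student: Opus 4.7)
The plan is to treat the four items separately, since each relies on a different classical tool: compactness for (i), a Slater-type geometric construction for (ii), Hoffman's error bound for (iii), and a reduction combining (i)--(iii) for (iv). In all cases it is convenient to set $\rho(x):=\max_{i\in I}d(x,C_i)$ and to try to produce, given $x$, a nearby point in $C$ whose distance from $x$ can be controlled by $\rho(x)$.

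For (i), I would argue by contradiction: if $\mathcal{C}$ failed to be boundedly regular, there would exist a bounded sequence $\{x^{k}\}\subseteq S$ with $\rho(x^{k})\to 0$ yet $d(x^{k},C)\geq\varepsilon$ along a subsequence. Since $\dim\mathcal{H}<\infty$ and $\{x^{k}\}$ is bounded, I could pass to a norm-convergent sub-subsequence $x^{k_l}\to x^{\ast}$; continuity of each function $d(\cdot,C_i)$ forces $x^{\ast}\in\bigcap_iC_i=C$, contradicting $d(x^{k_l},C)\geq\varepsilon$. For (iii), the standard route is Hoffman's lemma applied to the affine system defining the half-spaces $C_i=\{x:\langle a_i,x\rangle\leq b_i\}$, which yields a global constant $\kappa$ with $d(x,C)\leq\kappa\max_i(\langle a_i,x\rangle-b_i)_+/\|a_i\|=\kappa\,\rho(x)$ for every $x\in\mathcal{H}$.

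The main work is in (ii), which I expect to be the principal obstacle. Fix $z\in C_{j}\cap\limfunc{int}\bigl(\bigcap_{i\neq j}C_{i}\bigr)$ and $r>0$ with $B(z,r)\subseteq C_{i}$ for all $i\neq j$. For a bounded set $S$, write $M:=\sup_{x\in S}\|x-z\|$, and for each $x\in S$ set $w:=P_{C_{j}}x\in C_{j}$ and $\rho:=\rho(x)$. The candidate for a point in $C$ is $y_{t}:=(1-t)w+tz$, which lies in $C_{j}$ by convexity. To force $y_{t}\in C_{i}$ for $i\neq j$, I would use the interior ball: since $\|w-z\|\leq\|w-x\|+\|x-z\|\leq\rho+M$, one can estimate $d(y_{t},C_{i})\leq(1-t)d(w,C_{i})+0$ using that for any $u\in C_{i}\cap B(z,r)$ we have $(1-t)w+tu\in C_{i}$, and then push $u$ optimally. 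A short calculation then shows that choosing $t\in(0,1]$ with $tr\geq 2(1-t)\rho$ (which is possible with $t\asymp \rho/(\rho+r)$) places $y_{t}$ inside every $C_{i}$; the resulting bound $\|x-y_{t}\|\leq\|x-w\|+t\|w-z\|\leq\rho+t(\rho+M)$ gives $d(x,C)\leq\kappa\rho$ with $\kappa$ depending only on $M$ and $r$, which is exactly bounded linear regularity.

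Finally, for (iv), I would combine (iii) and (i). Let $J\subseteq I$ index the half-spaces and let $z\in\bigcap_{j\in J}C_{j}\cap\bigcap_{i\in I\setminus J}\limfunc{ri}C_{i}$. Working in the finite-dimensional affine hull of a neighborhood of $z$ that sits inside all non-half-space $C_{i}$, the local problem reduces to intersecting half-spaces, where (iii) supplies a linear regularity constant. For a bounded $S$, a compactness/contradiction argument in the spirit of (i) then upgrades this local estimate to a single modulus $\kappa$ valid over all of $S$: if no such $\kappa$ existed, one could extract $x^{k}\in S$ with $d(x^{k},C)\geq k\,\rho(x^{k})$, pass to a convergent subsequence in the finite-dimensional ambient space, and derive a contradiction with the local linear regularity around the limit point (or, in the degenerate case $\rho(x^{k})\to 0$, with (i)). The hardest technical point throughout remains the geometric perturbation argument in (ii); the other three parts are then essentially book-keeping on top of classical results.
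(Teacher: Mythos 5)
The paper offers no proof of this theorem (it is quoted from \cite{BB96, BNP15}), so you are being measured against the classical arguments, and for (i)--(iii) your route is essentially the standard one and is sound. For (i) the compactness/contradiction argument is correct as stated. For (iii), note only that Hoffman's error bound is usually formulated in $\mathbb{R}^{n}$; since the constraints involve only the component of $x$ in the finite-dimensional subspace $V:=\mathrm{span}\{a_{1},\ldots ,a_{m}\}$ and both $d(\cdot ,C_{i})$ and $d(\cdot ,C)$ are computed from that component, the reduction to $V$ is immediate, but it should be said. In (ii) your constants and your condition $tr\geq 2(1-t)\rho $ are exactly right, but the intermediate sentence is garbled: the claim ``for any $u\in C_{i}\cap B(z,r)$ we have $(1-t)w+tu\in C_{i}$'' is false (since $w$ need not lie in $C_{i}$), and the convexity estimate $d(y_{t},C_{i})\leq (1-t)d(w,C_{i})$ by itself does not place $y_{t}$ in $C_{i}$. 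The correct completion of ``push $u$ optimally'' is to write $y_{t}=(1-t)P_{C_{i}}w+t\,u_{i}$ with $u_{i}:=z+\frac{1-t}{t}(w-P_{C_{i}}w)$, and to observe that $u_{i}\in B(z,r)\subseteq C_{i}$ precisely when $(1-t)d(w,C_{i})\leq tr$, which your condition guarantees because $d(w,C_{i})\leq 2\rho $. With that repair, (ii) is correct.

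The genuine gap is in (iv). In your contradiction scheme one takes $x^{k}\in S$ with $d(x^{k},C)\geq k\,\rho (x^{k})$; since $d(x^{k},C)$ is bounded on $S$, automatically $\rho (x^{k})\rightarrow 0$, so the ``degenerate case'' is the only case, and (i) merely gives $x^{k}\rightarrow x^{\ast }\in C$ and $d(x^{k},C)\rightarrow 0$ --- no contradiction, since both sides tend to zero and only their ratio blows up. A contradiction would require a \emph{local} linear-regularity estimate in a neighborhood of the limit point $x^{\ast }$, and $x^{\ast }$ can be any point of $C$, typically on the relative boundary of the non-polyhedral sets $C_{i}$ and far from the relative Slater point $z$; near such a point there is no reduction ``to intersecting half-spaces'', so the estimate you appeal to is exactly what remains unproved. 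A workable completion in the spirit you intend: first apply (iii) to the polyhedral family $\{C_{j}\mid j\in J\}\cup \{\limfunc{aff}C_{i}\mid i\in I\setminus J\}$ (using $d(x,\limfunc{aff}C_{i})\leq d(x,C_{i})$) to obtain $y\in \bigcap_{j\in J}C_{j}\cap \bigcap_{i\notin J}\limfunc{aff}C_{i}$ with $\Vert x-y\Vert \leq \kappa _{1}\rho (x)$; then repeat your argument from (ii) along the segment $[y,z]$, using the relative ball $B(z,\varepsilon )\cap \limfunc{aff}C_{i}\subseteq C_{i}$ supplied by $z\in \limfunc{ri}C_{i}$ and noting that $y_{t}=(1-t)P_{C_{i}}y+t\bigl(z+\frac{1-t}{t}(y-P_{C_{i}}y)\bigr)$ with the displaced point staying in $\limfunc{aff}C_{i}$; this lands in $C$ at distance $O(\rho (x))$ on bounded sets. (The proof in \cite{BB96} instead proceeds by induction on the number of sets, via a two-set lemma for a polyhedron meeting the relative interior of a convex set.) As written, however, your argument for (iv) does not go through.
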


More sufficient conditions can be found, for example, in \cite[Fact 5.8]%
{BNP15}. Note that the bounded linear regularity of a family $\{C_{i}\mid
i\in I\}$ has no inheritance property even if each $C_{i}$, $i\in I$, is a
closed linear subspace \cite{RZ14}.

\section{Regular operators\label{s-RO}}

\begin{definition}
\label{d-R}%
\rm\ %
Let $S\subseteq \mathcal{H}$ be nonempty, and $C\subseteq \mathcal{H}$ be
nonempty, closed and convex. We say that a quasi-nonexpansive operator $%
U\colon\mathcal{H}\rightarrow\mathcal{H}$ is:

\begin{enumerate}
\item[(a)] \textit{weakly }$C$-\textit{regular} over $S$ if for any sequence 
$\{x^{k}\}_{k=0}^{\infty }\subseteq S$ and $\{n_k\}_{k=0}^\infty\subseteq
\{k\}_{k=0}^\infty$, we have 
\begin{equation}
\left . 
\begin{array}{l}
x^{n_k}\rightharpoonup y \\ 
\|U x^k-x^k\|\rightarrow 0%
\end{array}
\right\}\quad\Longrightarrow\quad y\in C;
\end{equation}

\item[(b)] $C$-\textit{regular} over $S$ if for any sequence $%
\{x^{k}\}_{k=0}^{\infty }\subseteq S$, we have 
\begin{equation}
\lim_{k\rightarrow \infty }\Vert Ux^{k}-x^{k}\Vert =0\quad \Longrightarrow
\quad \lim_{k\rightarrow \infty }d(x^{k},C)=0\text{;}
\end{equation}

\item[(c)] \textit{linearly }$C$-\textit{regular} over $S$ if there is $%
\delta >0$ such that for all $x\in S$, we have 
\begin{equation}
d(x,C)\leq \delta \Vert Ux-x\Vert \text{.}
\end{equation}%
The constant $\delta $ is called a \textit{modulus} of the linear $C$%
-regularity\textit{\ of }$U$ over $S$.
\end{enumerate}

If any of the above regularity conditions holds for $S=\mathcal{H}$, then we
omit the phrase ``over $S$''. If any of the above regularity conditions
holds for every bounded subset $S\subseteq \mathcal{H}$, then we precede the
corresponding term with the adverb ``boundedly'' while omitting the phrase
``over $S$'' (we allow $\delta $ to depend on $S$ in (c)). We say that $U$
is \textit{(boundedly) weakly regular}, \textit{regular} or \textit{linearly
regular} (over $S$) if $C=\limfunc{Fix}U$ in (a), (b) or (c), respectively.
\end{definition}

The most common setting of the above definition, in which we are interested,
is where 
\begin{equation}
C=\limfunc{Fix} U, \quad \limfunc{Fix} U\cap S \neq \emptyset \quad \text{and%
} \quad S \text{ is bounded}.
\end{equation}

\begin{remark}[Weak regularity]
\label{r-WR}%
\rm\ %
Note that if the operator $U$ is weakly regular, then this means that $U-%
\limfunc{Id}$ is demi-closed at 0. Observe that:

\begin{enumerate}
\item[(i)] $U$ is weakly $C$-regular over $S$ if and only if for any
sequence $\{x^{k}\}_{k=0}^{\infty }\subseteq S$, we have 
\begin{equation}
\left . 
\begin{array}{l}
x^k\rightharpoonup y \\ 
\|U x^k-x^k\|\rightarrow 0%
\end{array}
\right\}\quad\Longrightarrow\quad y\in C.
\end{equation}
This type of equivalence is no longer true for a $C$-weakly regular sequence
of operators, as we show in the next section; see Remark \ref{r-WR2}.

\item[(ii)] $U$ is boundedly weakly $C$-regular if and only if $U$ is weakly 
$C$-regular. This follows from (i) and the fact that any weakly convergent
sequence $\{x^k\}_{k=0}^\infty$ must be bounded. Therefore there is no need
to distinguish between boundedly weakly ($C$-)regular and weakly ($C$%
-)regular operators.
\end{enumerate}
\end{remark}

\begin{remark}[Regular operators and regular sets]
\label{r-BRsets}%
\rm\ %
The notion of regular operators is closely related to the notion of a
regular family of subsets. Indeed, for a family $\mathcal{C}$ of closed
convex subsets $C_{i}\subseteq \mathcal{H}$, $i\in I:=\{1,2,...,m\}$, having
a common point, denote by $P$ the metric projection onto the furthest subset 
$C_{i}$, that is, for any $x\in \mathcal{H}$ and for some $i(x)\in \limfunc{%
Argmax}_{i\in I}d(x,C_{i})$, $P(x)=P_{C_{i(x)}}x$. Note that, in general, $P$
is not uniquely defined, because, in general, $i(x)$ is not uniquely
defined. Therefore we suppose that for any $x\in \mathcal{H}$, the index $%
i(x)\in \limfunc{Argmax}_{i\in I}d(x,C_{i})$ is fixed, for example, $%
i(x)=\min \{i\in I\mid i\in \limfunc{Argmax}_{i\in I}d(x,C_{i})\}$. It is
easily seen that the operator $P$ is (linearly) regular over $S$ (with
modulus $\delta $) if and only if the family $\mathcal{C}$ is (linearly)
regular over $S$ (with modulus $\delta $).
\end{remark}

Clearly, the metric projection $P_{C}$ onto a nonempty closed convex subset $%
C\subseteq \mathcal{H}$ is linearly regular with a modulus $\delta =1$.
Below we give a few examples of weakly (boundedly, boundedly linearly)
regular operators.

\begin{example}
\label{ex-NE-BR}%
\rm\ %
A nonexpansive operator $U\colon \mathcal{H}\rightarrow \mathcal{H}$ with a
fixed point is weakly regular. This follows from the fact that a
nonexpansive operator satisfies the demi-closedness principle \cite[Lemma 2]%
{Opi67}. If $\mathcal{H}=\mathbb{R}^{n}$, then, by \cite[Proposition 4.1]%
{CZ14}, $U$ is boundedly regular. This, in principle, follows from the fact
that in $\mathbb{R}^{n}$ the weak convergence is equivalent to the strong
one. In this paper we extend \cite[Proposition 4.1]{CZ14} to sequences of
regular operators; see Theorem \ref{t-reg} and Corollary \ref{c-reg}.
\end{example}

\begin{example}[Subgradient projection]
\label{ex-subProj}%
\rm\ %
Let $f\colon \mathcal{H}\rightarrow \mathbb{R}$ be continuous and convex
with a nonempty sublevel set $S(f,0)$ and let $P_{f}:\mathcal{H}\rightarrow 
\mathcal{H}$ be a subgradient projection.

\begin{enumerate}
\item[(a)] If $f$ is Lipschitz continuous on bounded sets, then $P_{f}$ is
weakly regular \cite[Theorem 4.2.7]{Ceg12}. We recall that $f$ is Lipschitz
continuous on bounded sets if and only if $f$ maps bounded sets onto bounded
sets if and only if $\partial f$ is uniformly bounded on bounded sets \cite[%
Proposition 7.8]{BB96}. All three conditions hold true if $\mathcal{H}=%
\mathbb{R}^{n}$. If, in addition, $f$ is strongly convex, then $P_{f}$ is
boundedly regular. A detailed proof of this fact will be presented elsewhere.

\item[(b)] If $\mathcal{H}=\mathbb{R}^{n}$ then, by (a) and by the
equivalence of weak and strong convergence in a finite dimensional space, $%
P_{f}$ is boundedly regular. See also \cite[Lemma 24]{CZ13}.

\item[(c)] If $\mathcal{H}=\mathbb{R}^{n}$ and $f(z)<0$ for some $z\in 
\mathbb{R}^{n}$, then $P_{f}$ is boundedly linearly regular. Indeed, by (a)
and by Lemma \ref{l-Fuk}, for every compact $K\subseteq \mathbb{R}^{n}$,
there are $\delta ,\Delta >0$ such that $\Vert \partial f(x)\Vert \leq
\Delta $ and $\delta d(x,S(f,0))\leq f_{+}(x)$ for any $x\in K$. Thus, 
\begin{equation}
\Vert x-P_{f}x\Vert =\frac{f_{+}(x)}{\Vert g_{f}(x)\Vert }\geq \frac{\delta 
}{\Delta }d(x,S(f,0))\text{.}  \label{e-subProj}
\end{equation}%
Since $\limfunc{Fix}P_{f}=S(f,0)$, inequality (\ref{e-subProj}) proves the
bounded linear regularity of $P_{f}$.
\end{enumerate}
\end{example}

The operators presented in Examples \ref{ex-NE-BR} and \ref{ex-subProj}(b)
need not be boundedly regular if $\dim \mathcal{H}=\infty $ as the following
example shows.

\begin{example}[Subgradient projection which is not regular]
\rm\ %
Let $C_{1},C_{2}\subseteq \mathcal{H}$ be closed convex and $x^{0}\in 
\mathcal{H}$. Suppose that:

\begin{enumerate}
\item[(i)] $C:=C_{1}\cap C_{2}=\{0\}$,

\item[(ii)] $d(x^{0},C_{2})\leq d(x^{0},C_{1})$,

\item[(iii)] the sequence $\{x^{k}\}_{k=0}^{\infty }$ defined by the
recurrence $x^{k+1}=P_{C_{2}}P_{C_{1}}x^{k}$ converges weakly to $0$, but $%
\{x^{k}\}_{k=0}^{\infty }$ does not converge in norm.
\end{enumerate}

\noindent A construction of $C_{1},C_{2}$ and a point $x^{0}$, satisfying
(i)-(iii) is due to\ Hundal \cite{Hun04}; see also \cite{MR03}. Define a
function $f:\mathcal{H}\rightarrow 
\mathbb{R}
$ as follows: 
\begin{equation}
f(x)=\max \{d(x,C_{1}),d(x,C_{2})\}\text{.}
\end{equation}%
Clearly, $f$ is continuous and convex as the maximum of continuous and
convex functions. It is easy to check that for $x\in C_{1}\setminus C$ we
have $f(x)=d(x,C_{2})$ and $g_{f}(x)=\nabla f(x)=\frac{x-P_{C_{2}}x}{%
d(x,C_{2})}$, and that for $x\in C_{2}\setminus C$ we have $f(x)=d(x,C_{1})$
and $g_{f}(x)=\nabla f(x)=\frac{x-P_{C_{1}}x}{d(x,C_{1})}$. Let $u^{k}$ be
defined by the recurrence 
\begin{equation}
u^{k+1}=P_{f}u^{k}\text{,}
\end{equation}%
with $u^{0}=x^{0}$. Then we have 
\begin{equation}
u^{k+1}=\left\{ 
\begin{array}{ll}
P_{C_{1}}u^{k}\text{,} & \text{for }k=2n\text{,} \\ 
P_{C_{2}}u^{k}\text{,} & \text{for }k=2n+1\text{.}%
\end{array}%
\right.
\end{equation}%
By Hundal's construction, $u^{k}$ converges weakly to $0$ but does not
converge in norm, that is, $\limsup_{k}\Vert u^{k}\Vert >0$. Now it is
easily seen that $P_{f}$ is not boundedly regular. Indeed. $%
\{u^{k}\}_{k=0}^{\infty }$ is bounded as a weakly convergent sequence.
Moreover, $\lim_{k}\left\Vert u^{k}-P_{f}u^{k}\right\Vert =0$, because $%
P_{f} $ is SQNE (see Lemma \ref{f-6a}). But 
\begin{equation}
\limsup_{k}d(u^{k},C)=\limsup_{k}\Vert u^{k}\Vert >0\text{.}
\end{equation}%
Thus, $P_{f}$ is not boundedly regular.
\end{example}

\section{Regular sequences of operators}

\label{s-RS}

\begin{definition}
\label{d-RS}%
\rm\ %
Let $S\subseteq \mathcal{H}$ be nonempty, and $C\subseteq \mathcal{H}$ be
nonempty, closed and convex. We say that the sequence $\{U_{k}\}_{k=0}^{%
\infty }$ of quasi-nonexpansive operators $U_{k}\colon \mathcal{H}%
\rightarrow \mathcal{H}$ is:

\begin{enumerate}
\item[(a)] \textit{weakly }$C$-\textit{regular} over $S$ if for each $%
\{x^k\}_{k=0}^\infty\subseteq S$ and $\{n_k\}_{k=0}^\infty\subseteq
\{k\}_{k=0}^\infty$, we have 
\begin{equation}
\left . 
\begin{array}{l}
x^{n_k}\rightharpoonup y \\ 
\|U_k x^k-x^k\|\rightarrow 0%
\end{array}
\right\}\quad\Longrightarrow\quad y\in C;  \label{e-Wk}
\end{equation}

\item[(b)] $C$-\textit{regular} over $S$ if for any sequence $%
\{x^{k}\}_{k=0}^{\infty }\subseteq S$, we have 
\begin{equation}
\lim_{k\rightarrow \infty }\Vert U_{k}x^{k}-x^{k}\Vert =0\quad
\Longrightarrow \quad \lim_{k\rightarrow \infty }d(x^{k},C)=0\text{;}
\label{e-Rk}
\end{equation}

\item[(c)] \textit{linearly }$C$-\textit{regular} over $S$ if there is $%
\delta >0$ such that for all $x\in S$ and $k=0,1,2,\ldots$, we have 
\begin{equation}
d(x,C)\leq \delta \Vert U_{k}x-x\Vert \text{.}  \label{e-LRk}
\end{equation}%
The constant $\delta $ is called a \textit{modulus} of the linear $C$%
-regularity\textit{\ of }$\{U_{k}\}_{k=0}^{\infty }$ over $S$.
\end{enumerate}

\noindent If any of the above regularity conditions holds for $S=\mathcal{H}$%
, then we omit the phrase ``over $S$''. If any of the above regularity
conditions holds for every bounded subset $S\subseteq \mathcal{H}$, then we
precede the corresponding term with the adverb ``boundedly'' while omitting
the phrase ``over $S$''\ (we allow $\delta $ to depend on $S$ in (c)). We
say that $\{U_{k}\}_{k=0}^{\infty }$ is \textit{(boundedly) weakly regular,
regular or linearly regular} (over $S$), if 
\begin{equation}
C=\bigcap_{k=0}^{\infty }\limfunc{Fix}U_{k}\neq\emptyset
\end{equation}
in (a), (b) or (c), respectively.
\end{definition}

Setting $U_{k}=U$ for all $k\geq 0$ in Definition \ref{d-RS}, we arrive at
Definition \ref{d-R} of a (weakly, linearly) $C$-regular operator. Although
all the three sets $C$, $F:=\bigcap_{k=0}^\infty \limfunc{Fix} U_k$ and $S$
are not formally related in the above definition, similarly to the case of a
single operator, the most common setting that we are interested in is where 
\begin{equation}
C=F, \quad S\cap F \neq \emptyset \quad \text{and} \quad S \text{ is bounded}%
.
\end{equation}
We now adjust Remark \ref{r-WR} to the case of a sequence of operators.

\begin{remark}[Weak regularity]
\label{r-WR2}%
\rm\ %
Observe that:

\begin{enumerate}
\item[(i)] If $\{U_k\}_{k=0}^\infty$ is weakly $C$-regular over $S$ then,
obviously, for any sequence $\{x^{k}\}_{k=0}^{\infty }\subseteq S$, we have 
\begin{equation}  \label{e-WR2}
\left . 
\begin{array}{l}
x^k\rightharpoonup y \\ 
\|U_k x^k-x^k\|\rightarrow 0%
\end{array}
\right\}\quad\Longrightarrow\quad y\in C.
\end{equation}
The above condition \eqref{e-WR2} is no longer equivalent to \eqref{e-Wk},
as it was in the case of a constant sequence of operators. To see this,
following \cite[Sec. 4]{Ceg15}, we consider $U_{2k}:=T$ and $U_{2k+1}:=V$, $%
k=0,1,2,\ldots$, where $T,V\colon\mathcal{H }\rightarrow\mathcal{H}$ have a
nonempty common fixed point set $C=\limfunc{Fix}T\cap\limfunc{Fix}V$. Assume
that $V$ and $T$ are weakly regular. Then, clearly, $\{U_k\}_{k=0}^\infty$
satisfies \eqref{e-WR2}. Assume now that there is $y\in \limfunc{Fix}
T\setminus \limfunc{Fix} V$. Then, by taking $z\in \limfunc{Fix} V$ and
setting $x^{2k}=y$, $x^{2k+1}=z$, we see that $y$ is a weak cluster point of 
$\{x^k\}_{k=0}^\infty$ and $\|U_kx^k-x^k\|= 0$, but $y\notin \limfunc{Fix}
T\cap \limfunc{Fix} V$. Consequently, $\{U_k\}_{k=0}^\infty$ is not weakly
regular.

\item[(ii)] Assume that $F\neq \emptyset $. Then $\{U_{k}\}_{k=0}^{\infty }$
is boundedly weakly $C$-regular if and only if $\{U_{k}\}_{k=0}^{\infty }$
is weakly $C$-regular. Indeed, assume that $\{U_{k}\}_{k=0}^{\infty }$ is
boundedly weakly regular and let $\{x^{k}\}_{k=0}^{\infty }$ be such that $%
\Vert U_{k}x^{k}-x^{k}\Vert \rightarrow 0$ and $x^{n_{k}}\rightharpoonup y$.
Then, for any $z\in F$, the sequence 
\begin{equation}
y^{n}:=\left\{ 
\begin{array}{ll}
x^{k}\text{,} & \text{if }n=n_{k}\text{ for some }k\text{,} \\ 
z\text{,} & \text{otherwise}%
\end{array}%
\right.
\end{equation}%
is bounded, $y^{n_{k}}\rightharpoonup y$ and $\Vert U_{k}y^{k}-y^{k}\Vert
\rightarrow 0$. Consequently, by the bounded weak $C$-regularity of $%
\{U_{k}\}_{k=0}^{\infty }$, we have $y\in C$. This shows that $%
\{U_{k}\}_{k=0}^{\infty }$ is weakly regular. Therefore again, as it was for
the case of a single operator, there is no need to distinguish between
boundedly weakly ($C$-)regular and weakly ($C$-)regular sequences of
operators whenever $F\neq \emptyset $.
\end{enumerate}
\end{remark}

\begin{theorem}
\label{t-reg} Let $U_{k}\colon \mathcal{H}\rightarrow \mathcal{H}$ be
quasi-nonexpansive, $k=0,1,2,\ldots$, let $S\subseteq \mathcal{H}$ be
nonempty and let $C\subseteq \mathcal{H}$ be nonempty, closed and convex.
Then the following statements hold true:

\begin{enumerate}
\item[$\mathrm{(i)}$] If $\{U_{k}\}_{k=0}^{\infty }$ is linearly $C$-\textit{%
regular} over $S$, then $\{U_{k}\}_{k=0}^{\infty }$ is $C$-\textit{regular}
over $S$.

\item[$\mathrm{(ii)}$] If $\{U_{k}\}_{k=0}^{\infty }$ is $C$-regular over $S$%
, then $\{U_{k}\}_{k=0}^{\infty }$ is weakly $C$-regular over $S$.

\item[$\mathrm{(iii)}$] If $\{U_{k}\}_{k=0}^{\infty }$ is weakly $C$-regular
over $S$, $\mathcal{H}=\mathbb{R}^n$ and $S$ is bounded, then $%
\{U_{k}\}_{k=0}^{\infty }$ is $C$-regular over $S$.
\end{enumerate}
\end{theorem}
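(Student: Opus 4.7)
The plan is to attack each item in turn, as the implications form a decreasing chain of strength and the arguments are progressively less immediate.

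For (i), the approach is essentially to substitute into the defining inequality. Given any sequence $\{x^k\}_{k=0}^\infty \subseteq S$ with $\|U_k x^k - x^k\| \to 0$, linear $C$-regularity with modulus $\delta$ yields $d(x^k, C) \leq \delta \|U_k x^k - x^k\|$ for every $k$, so the right-hand side tends to zero. There is no real obstacle here.

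For (ii), the key observation is that $C$ is weakly closed (being closed and convex). Take $\{x^k\} \subseteq S$ with $\|U_k x^k - x^k\| \to 0$ and any subsequence $x^{n_k} \rightharpoonup y$. By the $C$-regularity hypothesis, $d(x^k, C) \to 0$ along the full sequence, so in particular $d(x^{n_k}, C) \to 0$. For each $k$ pick $z^{n_k} \in C$ with $\|x^{n_k} - z^{n_k}\| \leq d(x^{n_k}, C) + 1/k$. Then $z^{n_k} - x^{n_k} \to 0$ strongly, hence weakly, so $z^{n_k} \rightharpoonup y$, and the weak closedness of $C$ forces $y \in C$.

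For (iii), I would argue by contradiction, exploiting finite dimensionality to upgrade weak convergence to norm convergence. Suppose $\{U_k\}_{k=0}^\infty$ is not $C$-regular over $S$: then there exist $\{x^k\} \subseteq S$ with $\|U_k x^k - x^k\| \to 0$ and $\varepsilon > 0$ together with an index subsequence $\{n_k\}$ for which $d(x^{n_k}, C) \geq \varepsilon$. Since $S$ is bounded in $\mathbb{R}^n$, the Bolzano--Weierstrass theorem lets us refine $\{n_k\}$ to a subsequence (still denoted $\{n_k\}$) along which $x^{n_k}$ converges in norm, hence weakly, to some $y \in \mathbb{R}^n$. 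Because $\{n_k\}$ is a subsequence of $\{k\}$ and $\|U_k x^k - x^k\| \to 0$ along the full sequence, the hypothesis of weak $C$-regularity applies and gives $y \in C$. But then $d(x^{n_k}, C) \leq \|x^{n_k} - y\| \to 0$, contradicting $d(x^{n_k}, C) \geq \varepsilon$.

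The main subtlety, and the only step that requires care, is in (iii): one must make sure that the chosen subsequence of $\{x^k\}$ is indexed by a genuine subsequence $\{n_k\}$ of $\{k\}_{k=0}^\infty$, because weak $C$-regularity of the \emph{sequence} $\{U_k\}$ only delivers $y \in C$ when the operator indices match the full sequence $k$ along which $\|U_k x^k - x^k\| \to 0$ (see Remark \ref{r-WR2}(i)). With that alignment in place, finite dimensionality does all the remaining work.
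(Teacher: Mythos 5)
Your proposal is correct and follows essentially the same route as the paper: part (i) is immediate from the definition, part (ii) passes a weakly convergent subsequence to a point of $C$ (you via nearby points of $C$ and weak closedness, the paper via weak lower semicontinuity of $d(\cdot,C)$ --- the same mechanism), and part (iii) extracts a norm-convergent subsequence by compactness and applies weak $C$-regularity, the paper doing this directly with a $\limsup$-realizing subsequence rather than by contradiction. Your remark about keeping the indices $\{n_k\}$ aligned with the full-sequence residual condition is exactly the point the paper's definition (and Remark \ref{r-WR2}) is designed to handle, so no gap remains.
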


\begin{proof}
Part (i) follows directly from Definition \ref{d-RS}.

(ii) Suppose that $\{U_{k}\}_{k=0}^{\infty }$ is $C$-regular over $S$. Let $%
\{x^{k}\}_{k=0}^{\infty }\subseteq S$, $y$ be a weak cluster point of $%
\{x^{k}\}_{k=0}^{\infty }$ and $\Vert U_{k}x^{k}-x^{k}\Vert \rightarrow 0$.
Then $\lim_{k}d(x^{k},C)=0$. Let $\{x^{n_{k}}\}_{k=0}^{\infty }\subseteq
\{x^{k}\}_{k=0}^{\infty }$ be a subsequence converging weakly to $y$. By the
weak lower semicontinuity of $d(\cdot ,C)$, we have%
\begin{equation}
0=\lim_{k}d(x^{k},C)=\lim_{k}d(x^{n_{k}},C)\geq d(y,C)\geq 0\text{.}
\end{equation}%
Now the closedness of $C$ yields $y\in C$, which proves the weak $C$%
-regularity of $\{U_{k}\}_{k=0}^{\infty }$ over $S$.

(iii) Suppose that $\{U_{k}\}_{k=0}^{\infty }$ is weakly $C$-regular over $S$
and $\mathcal{H}=\mathbb{R}^n$. Let $\{x^{k}\}_{k=0}^{\infty }\subseteq S$
and $\lim_{k}\Vert U_{k}x^{k}-x^{k}\Vert =0$. We prove that $%
\lim_{k}d(x^{k},C)=0$. By the boundedness of $S$, there is a subsequence $%
\{x^{n_{k}}\}_{k=0}^{\infty }\subseteq\{x^{k}\}_{k=0}^{\infty }$ which
converges to $y\in \mathcal{H}$ and such that $\limsup_{k}d(x^{k},C)=%
\lim_{k}d(x^{n_{k}},C)$. The weak $C$-regularity of $\{U_{k}\}_{k=0}^{\infty
}$ over $S$ yields that $y\in C$. The continuity of $d(\cdot ,C)$ implies
that 
\begin{equation}
0\leq \limsup_{k}d(x^{k},C)=\lim_{k}d(x^{n_{k}},C)=d(y,C)=0\text{.}
\end{equation}%
Thus $\lim_{k}d(x^{k},C)=0$, that is, $\{U_{k}\}_{k=0}^{\infty }$ is $C$%
-regular over $S$.
\end{proof}

\bigskip

Parts (ii) and (iii) of Theorem \ref{t-reg} in the case $U_{k}=U$, $%
k=0,1,2\ldots$, were proved in \cite[Theorem 4.1]{CZ14}.

\begin{corollary}
\label{c-reg} Let $U_{k}\colon \mathcal{H}\rightarrow \mathcal{H}$ be
quasi-nonexpansive, $k=0,1,2,\ldots$, and assume that $\bigcap_{k=0}^\infty 
\limfunc{Fix}U_k\neq\emptyset$. Then the following statements hold true:

\begin{enumerate}
\item[$\mathrm{(i)}$] If $\{U_{k}\}_{k=0}^{\infty }$ is boundedly linearly
regular, then $\{U_{k}\}_{k=0}^{\infty }$ is boundedly regular.

\item[$\mathrm{(ii)}$] If $\{U_{k}\}_{k=0}^{\infty }$ is boundedly regular,
then $\{U_{k}\}_{k=0}^{\infty }$ is weakly regular.

\item[$\mathrm{(iii)}$] If $\{U_{k}\}_{k=0}^{\infty }$ is weakly regular and 
$\mathcal{H}=\mathbb{R}^{n}$, then $\{U_{k}\}_{k=0}^{\infty }$ is boundedly
regular.
\end{enumerate}
\end{corollary}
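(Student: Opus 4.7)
The plan is to derive Corollary \ref{c-reg} directly from Theorem \ref{t-reg} by specializing the convex set $C$ to $F:=\bigcap_{k=0}^{\infty}\limfunc{Fix}U_k$, which is nonempty, closed and convex (closedness and convexity of each $\limfunc{Fix}U_k$ follow from Fact \ref{f-3}, and intersections preserve both properties). Thus, with this choice of $C$, Definition \ref{d-RS} recovers precisely the unqualified notions of weak, bounded and linear regularity for the sequence $\{U_k\}_{k=0}^{\infty}$. All three implications will then be obtained by quantifying Theorem \ref{t-reg} over all bounded subsets $S\subseteq\mathcal{H}$ and applying Remark \ref{r-WR2}(ii) where appropriate.

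For (i), I would fix an arbitrary bounded subset $S\subseteq\mathcal{H}$. The hypothesis gives that $\{U_k\}_{k=0}^\infty$ is linearly $F$-regular over $S$ (with some modulus $\delta=\delta(S)$). Theorem \ref{t-reg}(i) then yields that $\{U_k\}_{k=0}^\infty$ is $F$-regular over $S$. Since $S$ was arbitrary, $\{U_k\}_{k=0}^\infty$ is boundedly regular.

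For (ii), again fix any bounded $S\subseteq\mathcal{H}$. By bounded regularity, $\{U_k\}_{k=0}^\infty$ is $F$-regular over $S$, so Theorem \ref{t-reg}(ii) gives that $\{U_k\}_{k=0}^\infty$ is weakly $F$-regular over $S$. Hence $\{U_k\}_{k=0}^\infty$ is boundedly weakly regular, and by the equivalence stated in Remark \ref{r-WR2}(ii) (which is where the assumption $F\neq\emptyset$ is essential), this is equivalent to weak regularity.

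For (iii), I would first invoke Remark \ref{r-WR2}(ii) in the opposite direction: weak regularity is equivalent to bounded weak regularity, so for every bounded $S\subseteq\mathcal{H}$, $\{U_k\}_{k=0}^\infty$ is weakly $F$-regular over $S$. In the finite-dimensional setting $\mathcal{H}=\mathbb{R}^n$, Theorem \ref{t-reg}(iii) then promotes this to $F$-regularity over $S$, and since $S$ was arbitrary, $\{U_k\}_{k=0}^\infty$ is boundedly regular.

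There is no genuine obstacle in this proof: it is a pure bookkeeping argument that translates the ``over $S$'' statements of Theorem \ref{t-reg} into the ``boundedly'' terminology of Corollary \ref{c-reg}. The only subtlety worth flagging is the role of the assumption $F\neq\emptyset$, which is needed both to make $C=F$ a legitimate choice in Definition \ref{d-RS} and to invoke the equivalence in Remark \ref{r-WR2}(ii) that lets us ignore the distinction between bounded weak regularity and weak regularity.
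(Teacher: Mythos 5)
Your proposal is correct and is exactly how the paper intends the corollary to be obtained: it is the specialization of Theorem \ref{t-reg} to $C=F:=\bigcap_{k=0}^{\infty}\limfunc{Fix}U_{k}$ quantified over all bounded subsets $S$, with Remark \ref{r-WR2}(ii) handling the passage between bounded weak regularity and weak regularity (the paper gives no separate proof, leaving precisely this bookkeeping implicit). Your flagging of where $F\neq\emptyset$ is needed matches the paper's hypotheses.
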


\begin{remark}
\label{r-RSa}%
\rm\ %
Let $\{U_{k}\}_{k=0}^{\infty }$ be a sequence of quasi-nonexpansive
operators and let $C\subseteq \mathcal{H}$ be nonempty, closed and convex.
Clearly, the sequence $\{U_{k}\}_{k=0}^{\infty }$ is (weakly, linearly) $C$%
-regular over any nonempty bounded subset $S\subseteq C$ because if $x\in C $%
, then $d(x,C)=0$. Let $S_{i}\subseteq \mathcal{H}$, $i=1,2$, be nonempty.
If $\{U_{k}\}_{k=0}^{\infty }$ is (weakly, linearly) $C$-regular over $S_{i}$%
, $i=1,2$, then $\{U_{k}\}_{k=0}^{\infty }$ is (weakly, linearly) $C$%
-regular over $S:=S_{1}\cup S_{2}$. Thus, without loss of generality, we can
add to $S$ an arbitrary bounded subset of $C$. Moreover, if $%
\{U_{k}\}_{k=0}^{\infty }$ is (weakly, linearly) $C$-regular over $S$, then $%
\{U_{k}\}_{k=0}^{\infty }$ is (weakly, linearly) $C$-regular over an
arbitrary nonempty subset of $S$. Thus in the definition of boundedly
(weakly, linearly) $C$-regular sequences of operators we can restrict the
bounded subsets $S$ to balls $B(z,R)$, where $z\in C$ is fixed and $R>0$.
\end{remark}

\begin{remark}
\label{r-RS}%
\rm\ %
Let $C_{1},C_{2}$ $\subseteq \mathcal{H}$ be nonempty, closed and convex, $%
C_{1}\subseteq C_{2}$, and $S\subseteq \mathcal{H}$ be nonempty. Clearly, $%
\{U_{k}\}_{k=0}^{\infty }$ is (weakly, linearly) $C_{2}$-regular over $S$ if 
$\{U_{k}\}_{k=0}^{\infty }$ is (weakly, linearly) $C_{1}$-regular over $S$.
\end{remark}

We finish this section with two natural properties of (weakly, linearly) $C$%
-regular sequences of operators.

\begin{proposition}[Relaxation]
\label{l-rel-BR} Let $T_{k}\colon \mathcal{H}\rightarrow \mathcal{H}$ be
quasi-nonexpansive, $k=0,1,2,\ldots $, let $S\subseteq \mathcal{H}$ be
nonempty and let $C\subseteq \mathcal{H}$ be nonempty, closed and convex.
Suppose that $\{T_{k}\}_{k=0}^{\infty }$ is weakly (boundedly, boundedly
linearly) $C$-regular over $S$ (with modulus $\delta $) and $U_{k}:=\limfunc{%
Id}+\lambda _{k}(T_{k}-\limfunc{Id})$, where $0<\lambda =\inf_{k}\lambda
_{k}\leq \lambda _{k}\leq 1$. Then the sequence $\{U_{k}\}_{k=0}^{\infty }$
is weakly, (boundedly, boundedly linearly) $C$-regular over $S$ (with
modulus $\delta /\lambda $).
\end{proposition}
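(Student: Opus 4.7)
The key identity driving the proof is the simple algebraic fact
\[
U_k x - x = \lambda_k (T_k x - x),
\]
valid for every $x \in \mathcal{H}$ and every $k \geq 0$. Taking norms and using $0 < \lambda \leq \lambda_k \leq 1$ gives the two-sided estimate
\[
\lambda \, \|T_k x - x\| \;\leq\; \|U_k x - x\| \;=\; \lambda_k \|T_k x - x\| \;\leq\; \|T_k x - x\|.
\]
Before using this, I would first note that each $U_k$ is quasi-nonexpansive with $\limfunc{Fix} U_k = \limfunc{Fix} T_k$ (by Corollary \ref{c-1}(ii) applied with $\lambda_k \in (0,1]$), so Definition \ref{d-RS} applies to $\{U_k\}_{k=0}^\infty$.

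The three regularity statements then follow by plugging the estimate into Definition \ref{d-RS}. For weak $C$-regularity, if $\{x^k\}_{k=0}^\infty \subseteq S$, $x^{n_k} \rightharpoonup y$ and $\|U_k x^k - x^k\| \to 0$, then the lower bound $\|U_k x^k - x^k\| \geq \lambda \|T_k x^k - x^k\|$ yields $\|T_k x^k - x^k\| \to 0$, so $y \in C$ by the weak $C$-regularity of $\{T_k\}_{k=0}^\infty$. For $C$-regularity (bounded case), the same substitution reduces $\lim_k \|U_k x^k - x^k\| = 0$ to $\lim_k \|T_k x^k - x^k\| = 0$, which in turn gives $\lim_k d(x^k, C) = 0$ by the $C$-regularity of $\{T_k\}_{k=0}^\infty$. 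For linear $C$-regularity over $S$ with modulus $\delta$, for every $x \in S$ and $k \geq 0$,
\[
d(x,C) \;\leq\; \delta \, \|T_k x - x\| \;=\; \frac{\delta}{\lambda_k} \|U_k x - x\| \;\leq\; \frac{\delta}{\lambda} \|U_k x - x\|,
\]
which delivers the modulus $\delta/\lambda$.

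There is essentially no obstacle here; the statement is a direct consequence of the identity $U_k x - x = \lambda_k (T_k x - x)$ combined with the positive lower bound $\lambda$ on $\lambda_k$. The only thing to be careful about is the bounded variants: since the conclusion of each estimate uses the \emph{same} $S$ as the hypothesis, the bounded versions follow by applying the reasoning above to each bounded $S \subseteq \mathcal{H}$ separately (with the corresponding modulus $\delta = \delta(S)$ for the linear case).
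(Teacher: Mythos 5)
Your proposal is correct and matches the paper's treatment: the paper simply states that the result follows directly from Definition \ref{d-RS}, and your argument via the identity $U_kx-x=\lambda_k(T_kx-x)$ together with the lower bound $\lambda\leq\lambda_k$ is precisely the direct verification that statement alludes to. The only detail worth keeping is the one you already noted, namely that $U_k$ is quasi-nonexpansive (Corollary \ref{c-1}(ii)) so that Definition \ref{d-RS} indeed applies.
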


\begin{proof}
The lemma follows directly from Definition \ref{d-RS}.
\end{proof}

\begin{proposition}[Subsequences of regular operators]
\label{l-sub-BR} Let $U_{k}\colon \mathcal{H}\rightarrow \mathcal{H}$ be
quasi-nonexpansive, $k=0,1,2,\ldots$, let $S\subseteq \mathcal{H}$ be
nonempty and let $C\subseteq \mathcal{H}$ be nonempty, closed and convex.
Moreover, let $F:=\bigcap_{k=0}^{\infty }\limfunc{Fix}U_{k}$. Then the
following statements hold true:

\begin{enumerate}
\item[$\mathrm{(i)}$] If $\{U_{k}\}_{k=0}^{\infty }$ is weakly $C$-regular
over $S$, then any of its subsequences $\{U_{n_{k}}\}_{k=0}^{\infty }$ is
weakly $C$-regular over $S$, whenever $S\cap F\neq\emptyset$.

\item[$\mathrm{(ii)}$] If $\{U_{k}\}_{k=0}^{\infty }$ is $C$-regular over $S$%
, then any of its subsequences $\{U_{n_{k}}\}_{k=0}^{\infty }$ is $C$%
-regular over $S$, whenever $S\cap F\neq \emptyset $.

\item[$\mathrm{(iii)}$] If $\{U_{k}\}_{k=0}^{\infty }$ is linearly $C$%
-regular over $S$ with modulus $\delta $, then any of its subsequences $%
\{U_{n_{k}}\}_{k=0}^{\infty }$ is linearly $C$-regular over $S$ with a
modulus $\delta $.
\end{enumerate}

Moreover, if $\{U_{k}\}_{k=0}^{\infty }$ is weakly, boundedly or boundedly
linearly regular, then $F=\bigcap_k \limfunc{Fix} U_{n_k}$ for every
subsequence $\{n_k\}_{k=0}^\infty\subseteq \{k\}_{k=0}^\infty$.
\end{proposition}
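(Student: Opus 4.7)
The plan is to handle (iii) by direct inspection of the definition, and to reduce (i) and (ii) to their full-sequence hypotheses via a single ``padding'' construction; the moreover clause will then fall out by applying the three subsequence statements to a constant test sequence.

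Part (iii) is immediate, since the inequality $d(x,C)\le\delta\|U_k x-x\|$ is asserted for every index $k$ and hence trivially for $k$ ranging over any prescribed subsequence. For (i) and (ii), I exploit the assumption $S\cap F\ne\emptyset$ to fix a pivot $z\in S\cap F$, and given any candidate test sequence $\{x^k\}_{k=0}^\infty\subseteq S$ for the subsequence $\{U_{n_k}\}$, I pad it into a full sequence
\[
y^j=\begin{cases}x^k,&\text{if }j=n_k,\\ z,&\text{if }j\notin\{n_k\}_{k\ge 0}.\end{cases}
\]
Then $\{y^j\}_{j=0}^\infty\subseteq S$, and since $z\in F\subseteq\limfunc{Fix}U_j$ the residual $U_jy^j-y^j$ vanishes at every padded index and equals $U_{n_k}x^k-x^k$ otherwise; so $\|U_{n_k}x^k-x^k\|\to 0$ forces $\|U_jy^j-y^j\|\to 0$ as $j\to\infty$. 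For (ii), $C$-regularity of $\{U_k\}$ over $S$ then yields $d(y^j,C)\to 0$, and restricting back to $j=n_k$ gives $d(x^k,C)\to 0$. For (i), if in addition $x^{p_k}\rightharpoonup y$ for some index subsequence $\{p_k\}$, then $y^{n_{p_k}}=x^{p_k}\rightharpoonup y$, so $y$ is a weak cluster point of $\{y^j\}$ and weak $C$-regularity of $\{U_k\}$ over $S$ delivers $y\in C$.

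For the moreover clause the inclusion $F\subseteq\bigcap_k\limfunc{Fix}U_{n_k}$ is obvious, so I verify the reverse. Fix $x\in\bigcap_k\limfunc{Fix}U_{n_k}$ and pick any $z\in F$; the two-point set $S':=\{x,z\}$ is bounded and meets $F$. Because $\{U_k\}$ is weakly, boundedly, or boundedly linearly regular in the global sense (so in particular $C=F$ and the regularity holds over every bounded subset), the just-proved parts (i), (ii), (iii) apply with $S=S'$ and force $\{U_{n_k}\}$ to inherit the same $F$-regularity over $S'$. Applied to the constant sequence $y^k\equiv x\in S'$, whose residuals $\|U_{n_k}x-x\|$ all vanish, each of the three notions immediately yields $x\in F$. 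The only subtlety I anticipate is the index bookkeeping in the padding argument---verifying that $\|U_jy^j-y^j\|\to 0$ along the \emph{full} index $j$ rather than only along $\{n_k\}$---but this is automatic because the padded terms contribute exactly zero.
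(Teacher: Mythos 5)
Your proof is correct and follows essentially the same route as the paper: part (iii) by inspection of the definition, parts (i)--(ii) by padding the test sequence with a point $z\in S\cap F$ so that the full-sequence regularity hypotheses apply, and the moreover clause by feeding a constant sequence into the inherited regularity over a small set meeting $F$. The only cosmetic differences are that the paper pads just the weakly convergent subsequence in (i) and settles the moreover clause for the weak case only, invoking Corollary \ref{c-reg} for the other two, whereas you use a single padding for (i)--(ii) and treat all three regularity types directly.
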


\begin{proof}
(i) Suppose that $\{U_{k}\}_{k=0}^{\infty }$ is weakly $C$-regular over $S$.
Let $\{x^{k}\}_{k=0}^{\infty }\subseteq S$, $\lim_{k}\Vert
U_{n_{k}}x^{k}-x^{k}\Vert =0$, $y$ be a weak cluster point of $%
\{x^{k}\}_{k=0}^{\infty }$ and $\{x^{m_{k}}\}_{k=0}^{\infty }\subseteq
\{x^{k}\}_{k=0}^{\infty }$ be a subsequence converging weakly to $y$. We
claim that $y\in C$. To show this, let $z\in S\cap F$ and define 
\begin{equation}
y^{n}:=\left\{ 
\begin{array}{ll}
x^{m_{k}}\text{,} & \text{if }n=n_{m_{k}}\text{ for some }k\text{,} \\ 
z\text{,} & \text{otherwise.}%
\end{array}%
\right.
\end{equation}%
Then $\{y^{n}\}_{n=0}^{\infty }\subseteq S$ and moreover, we have 
\begin{equation}
\Vert U_{n}y^{n}-y^{n}\Vert =\left\{ 
\begin{array}{ll}
\Vert U_{n_{m_{k}}}x^{m_{k}}-x^{m_{k}}\Vert \text{,} & \text{if }n=n_{m_{k}}%
\text{ for some }k\text{,} \\ 
0\text{,} & \text{otherwise.}%
\end{array}%
\right.
\end{equation}%
By assumption, $\lim_{k}\Vert U_{n_{m_{k}}}x^{m_{k}}-x^{m_{k}}\Vert =0$.
Consequently, $\lim_{n}\Vert U_{n}y^{n}-y^{n}\Vert =0$. Since $%
\{U_{n}\}_{n=0}^{\infty }$ is weakly $C$-regular over $S$ and $y$ is a weak
cluster point of $\{y^{n}\}_{n=0}^{\infty }$, we have $y\in C$.

(ii) Suppose that $\{U_{k}\}_{k=0}^{\infty }$ is $C$-regular over $S$. Let $%
\{x^{k}\}_{k=0}^{\infty }\subseteq S$, $\lim_{k}\Vert
U_{n_{k}}x^{k}-x^{k}\Vert =0$. We claim that $\lim_{k}d(x^{k},C)=0$. To show
this, let $z\in S\cap F\cap C$ and define 
\begin{equation}
y^{n}:=\left\{ 
\begin{array}{ll}
x^{k}\text{,} & \text{if }n=n_{k}\text{ for some }k\text{,} \\ 
z\text{,} & \text{otherwise.}%
\end{array}%
\right.
\end{equation}%
Then, as in (i), $\{y^n\}_{n=0}^\infty\subseteq S$ and we have 
\begin{equation}
\Vert U_{n}y^{n}-y^{n}\Vert =\left\{ 
\begin{array}{ll}
\Vert U_{n_{k}}x^{k}-x^{k}\Vert \text{,} & \text{if }n=n_{k}\text{,} \\ 
0\text{,} & \text{otherwise.}%
\end{array}%
\right.
\end{equation}%
By assumption, $\lim_{k}\Vert U_{n_{k}}x^{k}-x^{k}\Vert =0$. Consequently, $%
\lim_{n}\Vert U_{n}y^{n}-y^{n}\Vert =0$. Since $\{U_{n}\}_{n=0}^{\infty }$
is $C$-regular over $S$, we have $\lim_{n}d(y^{n},C)=0$, which yields $%
\lim_{k}d(x^{k},C)=0$.

The proof of part (iii) is straightforward.

Assume that $\{U_{k}\}_{k=0}^{\infty }$ is weakly regular, $%
\{n_{k}\}_{k=0}^{\infty }\subseteq \{k\}_{k=0}^{\infty }$ and let $z\in
\bigcap_{k}\limfunc{Fix}U_{n_{k}}$. We show that $z\in F$. Define $y^{k}=z$
for all $k=0,1,2,\ldots $. Then, by (i), we see that $z$ has to be in $F$.
Since bounded and bounded linear regularity imply weak regularity (Corollary %
\ref{c-reg}), the proof is complete.
\end{proof}

A variant of part (i), as well as the last statement from the above
proposition, were observed in \cite[Lemma 4.6, Remark 4.7]{Ceg15}.

\section{Convex combinations and products of regular sequences of operators 
\label{s-4}}

Theorems \ref{t-Rk1} and \ref{t-Rk2} below show that a family of (weakly,
linearly) regular sequences of operators having a common fixed point is
closed under convex combinations and compositions. We consider here $p$
sequences of operators $\{U_{j}^{k}\}_{k=0}^{\infty }$, $j=1,2,...,p$, and $%
m $ sets $C_{i}$, $i=1,2,...,m$.

\begin{theorem}
\label{t-Rk1} For each $k=0,1,2,\ldots ,$ let $U_{k}:=\sum_{j=1}^{p}\omega
_{j}^{k}U_{j}^{k}$, where $U_{j}^{k}\colon \mathcal{H}\rightarrow \mathcal{H}
$ is $\rho _{j}^{k}$-strongly quasi-nonexpansive, $\rho _{j}^{k}\geq 0$, $%
\omega _{j}^{k}\geq 0$, $j\in J:=\{1,\ldots ,p\}$, $\sum_{j\in J}\omega
_{j}^{k}=1$. Moreover, for each $i\in I:=\{1,\ldots ,m\}$, let $%
C_{i}\subseteq \mathcal{H}$ be closed and convex. Moreover, let $S\subseteq 
\mathcal{H}$ be bounded, $F_{0}:=\bigcap_{j\in J}\bigcap_{k\geq 0}\limfunc{%
Fix}U_{j}^{k}$, $C:=\bigcap_{i\in I}C_{i}$ and assume that $C\subseteq F_{0}$
is nonempty.

\begin{enumerate}
\item[$\mathrm{(i)}$] Suppose that for some $i\in I$, there is $%
\{j_{k}\}_{k=0}^{\infty }\subseteq J$ such that the sequence $%
\{U_{j_{k}}^{k}\}_{k=0}^{\infty }$ is weakly $C_{i}$-regular over $S$ and $%
\sigma_i:=\inf_k\omega_{j_k}^k\rho_{j_k}^k>0$. Then the sequence $%
\{U_{k}\}_{k=0}^{\infty }$ is weakly $C_{i}$-regular over $S$. If this
property holds for all $i\in I$, then $\{U_{k}\}_{k=0}^{\infty }$ is weakly $%
C$-regular over $S$;

\item[$\mathrm{(ii)}$] Suppose that for some $i\in I$, there is $%
\{j_{k}\}_{k=0}^{\infty }\subseteq J$ such that the sequence $%
\{U_{j_{k}}^{k}\}_{k=0}^{\infty }$ is $C_{i}$-regular over $S$ and $%
\sigma_i:=\inf_k\omega_{j_k}^k\rho_{j_k}^k>0$. Then the sequence $%
\{U_{k}\}_{k=0}^{\infty }$ is $C_{i}$-regular over $S$. If the property
holds for all $i\in I$ and $\{C_i\mid i\in I\}$ is regular over $S$, then $%
\{U_{k}\}_{k=0}^{\infty }$ is $C$-regular over $S$.

\item[$\mathrm{(iii)}$] Suppose that for any $i\in I$, there is $%
\{j_{k}\}_{k=0}^{\infty }\subseteq J$ such that the sequence $%
\{U_{j_{k}}^{k}\}_{k=0}^{\infty }$ is linearly $C_{i}$-regular over $S$ with
modulus $\delta _{i}$, $\sigma_i:=\inf_k\omega_{j_k}^k\rho_{j_k}^k>0$ and $%
\{C_i\mid i\in I\}$ is linearly regular over $S$ with modulus $\kappa >0$.
Then $\{U_{k}\}_{k=0}^{\infty }$ is linearly $C$-regular over $S$ with
modulus $2\kappa ^{2}\delta ^{2}/\sigma$, where $\sigma:=\min_i\sigma_i$ and 
$\delta :=\min_{i\in I}\delta _{i}$.
\end{enumerate}
\end{theorem}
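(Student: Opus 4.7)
The engine of the proof is Fact \ref{f-5}(iii). Fix any $z \in C$ (which sits inside $F_0$) and let $R := \sup_{x \in S}\|x - z\|$; this is finite by the boundedness of $S$. For every $k \geq 0$ and every $x \in S$, Fact \ref{f-5}(iii) applied to $U_k = \sum_j \omega_j^k U_j^k$ gives
$$\frac{\omega_{j_k}^k \rho_{j_k}^k}{2R}\,\|U_{j_k}^k x - x\|^2 \leq \frac{1}{2R}\sum_{j=1}^p \omega_j^k \rho_j^k \|U_j^k x - x\|^2 \leq \|U_k x - x\|,$$
and invoking the hypothesis $\omega_{j_k}^k\rho_{j_k}^k \geq \sigma_i > 0$ yields the master estimate
$$\|U_{j_k}^k x - x\|^2 \leq \frac{2R}{\sigma_i}\,\|U_k x - x\|. \qquad (\ast)$$
This single inequality transfers control from the convex combination $U_k$ back to the chosen component $U_{j_k}^k$ and drives all three parts.

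For part (i), given $\{x^k\} \subseteq S$ with $x^{n_k} \rightharpoonup y$ and $\|U_k x^k - x^k\| \to 0$, inequality $(\ast)$ forces $\|U_{j_k}^k x^k - x^k\| \to 0$; the assumed weak $C_i$-regularity of $\{U_{j_k}^k\}$ over $S$ then gives $y \in C_i$, and intersecting over $i \in I$ yields $y \in C$. Part (ii) is handled identically: $(\ast)$ turns $\|U_k x^k - x^k\| \to 0$ into $\|U_{j_k}^k x^k - x^k\| \to 0$, so $C_i$-regularity of $\{U_{j_k}^k\}$ forces $d(x^k, C_i) \to 0$ for each $i$; since then $\max_i d(x^k, C_i) \to 0$, regularity of the family $\{C_i\}$ over $S$ promotes this to $d(x^k, C) \to 0$.

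For part (iii) I square the linear $C_i$-regularity of $\{U_{j_k}^k\}$ and plug into $(\ast)$:
$$d(x, C_i)^2 \leq \delta_i^2\,\|U_{j_k}^k x - x\|^2 \leq \frac{2R\delta_i^2}{\sigma_i}\,\|U_k x - x\|.$$
Then the linear regularity of $\{C_i\}$ with modulus $\kappa$ gives, for every $x \in S$ and every $k$,
$$d(x, C)^2 \leq \kappa^2 \max_{i \in I} d(x, C_i)^2 \leq \frac{2R\kappa^2\delta^2}{\sigma}\,\|U_k x - x\|.$$
The main obstacle, and really the only non-routine bookkeeping, is the passage from this quadratic-in-$d(x,C)$ bound to the linear $C$-regularity of $\{U_k\}$ over $S$ that the theorem claims, with a modulus of the form $2\kappa^2\delta^2/\sigma$. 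I expect this to rest on the a priori estimate $d(x, C) \leq R$ (which holds because $z \in C$ and $\|x - z\| \leq R$) together with $\|U_k x - x\| \leq 2R$; the resulting $R$-factor should be absorbed into the $S$-dependence permitted by Definition \ref{d-RS}(c). Careful unpacking at this step is needed to match the precise constant stated, but all of the regularity content has already been packaged into $(\ast)$.
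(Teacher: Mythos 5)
Your parts (i) and (ii) are correct and coincide with the paper's argument: Fact \ref{f-5}(iii) with a fixed $z\in C$ and a global radius $R$ bounding $S$ transfers $\Vert U_kx^k-x^k\Vert\to 0$ to $\Vert U_{j_k}^kx^k-x^k\Vert\to 0$, and the rest is the definition of (weak) $C_i$-regularity plus regularity of the family $\{C_i\}$.

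Part (iii), however, has a genuine gap exactly at the point you flag. With your master estimate $(\ast)$, in which $R=\sup_{x\in S}\Vert x-z\Vert$ is a fixed constant, the chain only yields $d^2(x,C)\leq \frac{2R\kappa^2\delta^2}{\sigma}\Vert U_kx-x\Vert$, i.e.\ a H\"older-type bound $d(x,C)\leq c\sqrt{\Vert U_kx-x\Vert}$. This cannot be upgraded to the claimed linear bound by using $d(x,C)\leq R$ or $\Vert U_kx-x\Vert\leq 2R$: those give upper bounds, whereas to pass from the quadratic inequality to a linear one you would need to divide by $d(x,C)$, and when $\Vert U_kx-x\Vert$ is of order $d^2(x,C)$ the square-root bound is strictly weaker than linear regularity (think of an operator with $\Vert Ux-x\Vert\sim d^2(x,\limfunc{Fix}U)$ near its fixed point set). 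The missing idea, and the paper's actual move, is to apply Fact \ref{f-5}(iii) \emph{pointwise} with $z:=P_Cx$ and $R:=\Vert x-P_Cx\Vert=d(x,C)$ (the case $d(x,C)=0$ being trivial, and $P_Cx\in C\subseteq F_0$ so the choice of $z$ is admissible). Then $(\ast)$ becomes $\Vert U_{j_k}^kx-x\Vert^2\leq \frac{2\,d(x,C)}{\sigma}\Vert U_kx-x\Vert$, so squaring the linear $C_i$-regularity and invoking the linear regularity of $\{C_i\mid i\in I\}$ gives $d^2(x,C)\leq \frac{2\kappa^2\delta^2}{\sigma}\,d(x,C)\,\Vert U_kx-x\Vert$, and dividing by $d(x,C)$ produces the stated modulus $2\kappa^2\delta^2/\sigma$, independent of $R$. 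So your estimate $(\ast)$ is the right engine for (i) and (ii), but for (iii) the radius in Fact \ref{f-5}(iii) must be taken as $d(x,C)$ rather than a global bound on $S$; without that change the proof does not reach linear $C$-regularity.
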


\begin{proof}
Let $z\in C$ and $\{x^{k}\}_{k=0}^{\infty }\subseteq S$. By Fact \ref{f-5}%
(iii), for any $k=0,1,2,\ldots,$ and $j\in J$ we have 
\begin{equation}
\frac{\omega _{j}^{k}\rho _{j}^{k}}{2R}\Vert U_{j}^{k}x^{k}-x^{k}\Vert
^{2}\leq \frac{1}{2R}\sum_{i=1}^{p}\omega _{i}^{k}\rho _{i}^{k}\Vert
U_{i}^{k}x^{k}-x^{k}\Vert ^{2}\leq \Vert U_{k}x^{k}-x^{k}\Vert \text{,}
\label{e-1}
\end{equation}%
where $R>0$ is such that $S\subseteq B(z,R)$.

(i) Let $y$ be a weak cluster point of $\{x^{k}\}_{k=0}^{\infty }$, $i\in I$
and $\{j_{k}\}_{k=0}^{\infty }\subseteq J$ be such that the sequence $%
\{U_{j_{k}}^{k}\}_{k=0}^{\infty }$ is weakly $C_{i}$-regular over $S$.
Suppose that $\lim_{k}\Vert U_{k}x^{k}-x^{k}\Vert =0$. Inequalities (\ref%
{e-1}) with $j=j_{k}$, $k=0,1,2,\ldots,$ and the inequality $\sigma_i>0$
yield $\lim_{k}\Vert U_{j_{k}}^{k}x^{k}-x^{k}\Vert =0$. Thus $y\in C_{i}$,
that is, $\{U_{k}\}_{k=0}^{\infty }$ is weakly $C_{i}$-regular over $S$. If
this property holds for all $i\in I$, then $y\in C$, that is, $%
\{U_{k}\}_{k=0}^{\infty }$ is weakly $C$-regular over $S$.

(ii) Let $i\in I$ and $\{j_{k}\}_{k=0}^{\infty }\subseteq J$ be such that $%
\{U_{j_{k}}^{k}\}_{k=0}^{\infty }$ is $C_{i}$-regular over $S$. Suppose that 
$\lim_{k}\Vert U_{k}x^{k}-x^{k}\Vert =0$. By (\ref{e-1}) with $j=j_{k}$, $%
k=0,1,2,\ldots,$ and since $\sigma_i>0$, we have $\lim_{k}\Vert
U_{j_{k}}^{k}x^{k}-x^{k}\Vert =0$. Consequently, $\lim_{k}d(x^{k},C_{i})=0$,
that is, $\{U_{k}\}_{k=0}^{\infty }$ is $C_{i}$-regular over $S$. The proof
of the second part of (ii) follows now directly from the definition of a
regular family of sets.

(iii) Let $i\in I$ be arbitrary and $\{j_{k}\}_{k=0}^{\infty }\subseteq J$
be such that the sequence $\{U_{j_{k}}^{k}\}_{k=0}^{\infty }$ is linearly $%
C_{i}$-regular over $S$ with modulus $\delta _{i}$. By (\ref{e-1}) with $%
j=j_{k}$, $x^{k}=x$, $z=P_{C}x$ and $R=\Vert x-z\Vert =d(x,C)$, we get 
\begin{equation}
\Vert U_{j_{k}}^{k}x-x\Vert ^{2}\leq \sum_{j\in J}\frac{\omega _{j}^{k}\rho
_{j}^{k}}{\sigma}\Vert U_{j}^{k}x-x\Vert ^{2}\leq \frac{2d(x,C)}{\sigma}%
\Vert U_{k}x-x\Vert  \label{e-1a}
\end{equation}%
for all $x\in S$. Since the sequence $\{U_{j_{k}}^{k}\}_{k=0}^{\infty }$ is
linearly $C_{i}$-regular over $S$ with modulus $\delta _{i}$, we also have $%
d(x,C_{i})\leq \delta _{i}\Vert U_{j_{k}}^{k}x-x\Vert $, $x\in S$, $%
k=0,1,2,\ldots,$ and thus, by (\ref{e-1a}), we arrive at 
\begin{equation}
d^{2}(x,C_{i})\leq \frac{2\delta _{i}^{2}d(x,C)}{\sigma}\Vert U_{k}x-x\Vert
\label{e-1b}
\end{equation}%
for all $x\in S$ and $k=0,1,2,\ldots$. Since $\{C_i\mid i\in I\}$ is
linearly regular over $S$ with modulus $\kappa $, we get%
\begin{equation}
d^{2}(x,C)\leq \kappa ^{2}d^{2}(x,C_{i})\leq \frac{2\kappa ^{2}\delta
_{i}^{2}d(x,C)}{\sigma}\Vert U_{k}x-x\Vert
\end{equation}%
for all $i\in I$, $x\in S$ and $k=0,1,2,\ldots$. This yields 
\begin{equation}
d(x,C)\leq \frac{2\kappa ^{2}\delta ^{2}}{\sigma}\Vert U_{k}x-x\Vert
\end{equation}%
for all $x\in S$ and $k=0,1,2,\ldots,$ which means that $\{U_{k}\}_{k=0}^{%
\infty }$ is linearly $C$-regular over $S$ with modulus $2\kappa ^{2}\delta
^{2}/\sigma$, as asserted.
\end{proof}

\begin{corollary}
\label{c-Rk1a} For each $k=0,1,2,\ldots ,$ let $U_{k}:=\sum_{i=1}^{m}\omega
_{i}^{k}U_{i}^{k}$, where $U_{i}^{k}\colon \mathcal{H}\rightarrow \mathcal{H}
$ is $\rho _{i}^{k}$-strongly quasi-nonexpansive, $i\in I:=\{1,\ldots ,m\}$.
Assume that $\rho :=\min_{i\in I}\inf_{k}\rho _{i}^{k}>0$, $\omega
:=\min_{i}\inf_{k}\omega _{i}^{k}>0$, $\sum_{i\in I}\omega _{i}^{k}=1$ and $%
F_{0}:=\bigcap_{i\in I}F_{i}\neq \emptyset $, where $F_{i}:=\bigcap_{k\geq 0}%
\limfunc{Fix}U_{i}^{k}$. Moreover, let $S\subseteq \mathcal{H}$ be bounded.

\begin{enumerate}
\item[$\mathrm{(i)}$] Suppose that for any $i\in I$, $\{U_{i}^{k}\}_{k=0}^{%
\infty }$ is weakly regular over $S$. Then the sequence $\{U_{k}\}_{k=0}^{%
\infty }$ is also weakly regular over $S$.

\item[$\mathrm{(ii)}$] Suppose that for any $i\in I$, $\{U_{i}^{k}\}_{k=0}^{%
\infty }$ is regular over $S$ and the family $\{F_i \mid i\in I\}$ is
regular over $S$. Then the sequence $\{U_{k}\}_{k=0}^{\infty }$ is also
regular over $S$.

\item[$\mathrm{(iii)}$] Suppose that for any $i\in I$, $\{U_{i}^{k}%
\}_{k=0}^{\infty }$ is linearly regular over $S$ with modulus $\delta _{i}$, 
$\delta :=\min_{i\in I}\delta _{i}>0$, and the family $\{F_i \mid i\in I\}$
is linearly regular over $S$ with modulus $\kappa >0$. Then the sequence $%
\{U_{k}\}_{k=0}^{\infty }$ is regular over $S$ with modulus $2\kappa
^{2}\delta ^{2}/(\omega \rho )$.
\end{enumerate}
\end{corollary}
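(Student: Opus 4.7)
The plan is to derive all three parts as direct specializations of Theorem \ref{t-Rk1}, taking the index set $J$ of the theorem to coincide with $I=\{1,\ldots,m\}$ and setting $C_i:=F_i=\bigcap_{k\geq 0}\limfunc{Fix}U_i^k$ for each $i\in I$. Thus $\bigcap_{i\in I}C_i=F_0$. For each $i$ we use the constant index choice $j_k\equiv i$, so that $\{U_{j_k}^k\}_{k=0}^\infty=\{U_i^k\}_{k=0}^\infty$. The uniform positivity hypotheses give the crucial quantitative bound
\begin{equation}
\sigma_i=\inf_k\omega_{j_k}^k\rho_{j_k}^k=\inf_k\omega_i^k\rho_i^k\geq \omega\rho>0,
\end{equation}
which is what Theorem \ref{t-Rk1} requires.

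Before invoking the theorem, I would verify the compatibility between its conclusion (stated in terms of $C$-regularity with $C=F_0$) and the intended conclusion of the corollary (stated in terms of regularity of $\{U_k\}$, i.e., with $C=\bigcap_{k\geq 0}\limfunc{Fix}U_k$). This is where Fact \ref{f-5}(i) enters: because $\omega_i^k>0$ and $\rho_i^k>0$ for all $i,k$, we have $\limfunc{Fix}U_k=\bigcap_{i\in I}\limfunc{Fix}U_i^k$, and intersecting over $k$ yields $\bigcap_{k\geq 0}\limfunc{Fix}U_k=\bigcap_{i\in I}F_i=F_0$. So ``(weakly/boundedly/linearly) regular'' and ``(weakly/boundedly/linearly) $F_0$-regular'' are the same statement for $\{U_k\}$.

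With this identification in hand, the three parts follow mechanically: part (i) is Theorem \ref{t-Rk1}(i) applied for each $i\in I$, concluding weak $F_i$-regularity of $\{U_k\}$ over $S$ and, jointly, weak $F_0$-regularity; part (ii) is Theorem \ref{t-Rk1}(ii) applied for each $i\in I$, combined with the assumed regularity of $\{F_i\mid i\in I\}$ over $S$; part (iii) is Theorem \ref{t-Rk1}(iii), which yields linear $F_0$-regularity with modulus $2\kappa^2\delta^2/\sigma$ where $\sigma=\min_i\sigma_i\geq\omega\rho$, giving the claimed modulus $2\kappa^2\delta^2/(\omega\rho)$.

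There is no real obstacle here; the only subtle point is the bookkeeping described above between $F_0$ and $\bigcap_{k}\limfunc{Fix}U_k$, and remembering that in part (iii) the hypothesis ``$\{F_i\mid i\in I\}$ is linearly regular over $S$'' plays the role of the set-regularity hypothesis of Theorem \ref{t-Rk1}(iii). I would state the proof briefly, citing Fact \ref{f-5}(i) once for the fixed-point identification and then applying the three parts of Theorem \ref{t-Rk1} in turn.
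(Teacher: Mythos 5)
Your proof is correct and follows essentially the same route as the paper, which proves the corollary by substituting $J=I$, $C_i=F_i$ and $j_k=i$ in Theorem \ref{t-Rk1}. Your additional remark invoking Fact \ref{f-5}(i) to identify $\bigcap_{k\geq 0}\limfunc{Fix}U_k$ with $F_0$ (and the observation that $\sigma\geq\omega\rho$ only improves the modulus) is a harmless and welcome piece of explicit bookkeeping that the paper leaves implicit.
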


\begin{proof}
It suffices to substitute $J=I$, $C_{i}=F_{i}$ and $j_{k}=i$, $%
k=0,1,2,\ldots $ in Theorem \ref{t-Rk1}.
\end{proof}

\begin{corollary}
\label{c-Rk1b} Let $U:=\sum_{i=1}^{m}\omega _{i}U_{i}$, where $U_{i}\colon 
\mathcal{H}\rightarrow \mathcal{H}$ is $\rho _{i}$-strongly
quasi-nonexpansive, $i\in I:=\{1,\ldots ,m\}$. Assume that $\rho
:=\min_{i\in I}\rho _{i}>0$, $\omega :=\min_{i}\omega _{i}>0$, $\sum_{i\in
I}\omega _{i}=1$ and $F_{0}:=\bigcap_{i\in I}\limfunc{Fix}U_{i}\neq
\emptyset $. Moreover, let $S\subseteq \mathcal{H}$ be bounded.

\begin{enumerate}
\item[$\mathrm{(i)}$] Suppose that for any $i\in I$, $U_{i}$ is weakly
regular over $S$. Then $U$ is also weakly regular over $S$.

\item[$\mathrm{(ii)}$] Suppose that for any $i\in I$, $U_{i}$ is regular
over $S$ and the family $\{\limfunc{Fix}U_{i}\mid i\in I\}$ is regular over $%
S$. Then $U$ is also regular over $S$.

\item[$\mathrm{(iii)}$] Suppose that for any $i\in I$, $U_{i}$ is linearly
regular over $S$ with modulus $\delta _{i}$, $\delta :=\min_{i\in I}\delta
_{i}>0$, and the family $\{\limfunc{Fix}U_{i}\mid i\in I\}$ is linearly
regular over $S$ with modulus $\kappa >0$. Then $U$ is linearly regular over 
$S$ with modulus $2\kappa ^{2}\delta ^{2}/(\omega \rho )$.
\end{enumerate}
\end{corollary}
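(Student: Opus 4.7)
The plan is to derive Corollary \ref{c-Rk1b} as an immediate specialization of Corollary \ref{c-Rk1a} by choosing constant sequences. Specifically, for each $i\in I$ and each $k=0,1,2,\ldots$, set $U_i^k := U_i$, $\omega_i^k := \omega_i$ and $\rho_i^k := \rho_i$. Then $U_k = \sum_{i\in I} \omega_i^k U_i^k = \sum_{i\in I} \omega_i U_i = U$ for every $k$, so the sequence $\{U_k\}_{k=0}^\infty$ is the constant sequence with value $U$.

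Next I would verify that all hypotheses of Corollary \ref{c-Rk1a} are met under these substitutions. Since the $\rho_i^k$ and $\omega_i^k$ are constant in $k$, we have $\min_{i\in I}\inf_k \rho_i^k = \min_{i\in I}\rho_i = \rho > 0$ and $\min_{i\in I}\inf_k \omega_i^k = \min_{i\in I}\omega_i = \omega > 0$, and $\sum_{i\in I}\omega_i^k = 1$ holds by assumption. Moreover, $F_i := \bigcap_{k\geq 0}\limfunc{Fix} U_i^k = \limfunc{Fix} U_i$, so $F_0 = \bigcap_{i\in I}\limfunc{Fix} U_i \neq \emptyset$ by hypothesis.

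The remaining step is to observe that for a constant sequence of operators, the regularity notions from Definition \ref{d-RS} reduce to the corresponding notions for a single operator from Definition \ref{d-R}. That is, $\{U_i\}_{k=0}^\infty$ is weakly (respectively, boundedly, linearly) regular over $S$ if and only if $U_i$ itself is weakly (respectively, boundedly, linearly) regular over $S$, with the same modulus in the linear case. This equivalence is immediate from taking $x^k$ in Definition \ref{d-RS} to range over $S$ independently of $k$. Applying the same observation to $\{U_k\}_{k=0}^\infty$, regularity of the constant sequence is equivalent to regularity of $U$.

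With these identifications in place, parts (i), (ii), and (iii) of Corollary \ref{c-Rk1b} follow directly from parts (i), (ii), and (iii) of Corollary \ref{c-Rk1a}, respectively, including the explicit modulus $2\kappa^2\delta^2/(\omega\rho)$ in (iii). There is no genuine obstacle here; the whole content lies in Corollary \ref{c-Rk1a}, and Corollary \ref{c-Rk1b} is presented separately merely as a convenient stand-alone statement for the static case.
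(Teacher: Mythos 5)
Your proposal is correct and follows exactly the paper's own route: the paper proves Corollary \ref{c-Rk1b} by substituting $U_i^k=U_i$ and $\omega_i^k=\omega_i$ for all $k$ in Corollary \ref{c-Rk1a}, relying (as you do) on the observation, already noted after Definition \ref{d-RS}, that for a constant sequence the sequence-regularity notions coincide with those of Definition \ref{d-R} for the single operator.
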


\begin{proof}
It suffices to substitute $U_{i}^{k}=U_{i}$ and $\omega _{i}^{k}=\omega _{i}$
for all $k=0,1,2,\ldots ,$ and $i\in I$ in Corollary \ref{c-Rk1a}.
\end{proof}

\bigskip

Since $S\subseteq \mathcal{H}$ is an arbitrary nonempty and bounded subset
in Theorem \ref{t-Rk1} and in Corollaries \ref{c-Rk1a} and \ref{c-Rk1b},
these three results are also true for boundedly (weakly, linearly) ($C_{i}$%
-)regular sequences of operators.

Note that if an operator (or sequence of operators) is boundedly linearly
regular with modulus $\delta $, then the same property holds with any
modulus $\gamma >\delta $. Therefore, without any loss of generality, we can
restrict the analysis to boundedly linearly regular operators (or sequence
of operators) with modulus $\delta \geq 1$.

\begin{theorem}
\label{t-Rk2} For each $k=0,1,2,\ldots ,$ let $U_{k}:=U_{p}^{k}U_{p-1}^{k}%
\ldots U_{1}^{k}$, where $U_{j}^{k}\colon \mathcal{H}\rightarrow \mathcal{H}$
is $\rho _{j}^{k}$-strongly quasi-nonexpansive, $j\in J:=\{1,\ldots ,p\}$
and $\rho :=\min_{j\in J}\inf_{k}\rho _{j}^{k}>0$. Moreover, for each $i\in
I:=\{1,\ldots ,m\}$, let $C_{i}\subseteq \mathcal{H}$ be closed and convex.
Let $F_{0}:=\bigcap_{j\in J}\bigcap_{k\geq 0}\limfunc{Fix}U_{j}^{k}$, $%
C:=\bigcap_{i\in I}C_{i}$ and assume that $C\subseteq F_{0}$ is nonempty.
Moreover, let $S:=B(z,R)$ for some $z\in C$ and $R>0$.

\begin{enumerate}
\item[$\mathrm{(i)}$] Suppose that for some $i\in I$, there is $%
\{j_{k}\}_{k=0}^{\infty }\subseteq J$ such that the sequence $%
\{U_{j_{k}}^{k}\}_{k=0}^{\infty }$ is weakly $C_{i}$-regular over $S$. Then
the sequence $\{U_{k}\}_{k=0}^{\infty }$ is weakly $C_{i}$-regular over $S$.
If this property holds for all $i\in I$, then $\{U_{k}\}_{k=0}^{\infty }$ is
weakly $C$-regular over $S$.

\item[$\mathrm{(ii)}$] Suppose that for some $i\in I$, there is $%
\{j_{k}\}_{k=0}^{\infty }\subseteq J$ such that the sequence $%
\{U_{j_{k}}^{k}\}_{k=0}^{\infty }$ is $C_{i}$-regular over $S$. Then the
sequence $\{U_{k}\}_{k=0}^{\infty }$ is $C_{i}$-regular over $S$. If this
property holds for all $i\in I$ and $\{C_i\mid i\in I\}$ is regular over $S$%
, then $\{U_{k}\}_{k=0}^{\infty }$ is $C$-regular over $S$.

\item[$\mathrm{(iii)}$] Suppose that for any $i\in I$, there is $%
\{j_{k}\}_{k=0}^{\infty }\subseteq J$ such that the sequence $%
\{U_{j_{k}}^{k}\}_{k=0}^{\infty }$ is linearly $C_{i}$-regular over $S$ with
modulus $\delta _{i}\geq 1$, $\delta :=\min_{i\in I}\delta _{i}$, and $%
\{C_{i}\mid i\in I\}$ is linearly regular over $S$ with modulus $\kappa >0$.
Then $\{U_{k}\}_{k=0}^{\infty }$ is linearly $C$-regular over $S$ with
modulus $2p\kappa ^{2}\delta ^{2}/\rho $.
\end{enumerate}
\end{theorem}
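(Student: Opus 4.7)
The plan is to reduce all three parts to the corresponding hypothesis on the distinguished subsequence $\{U_{j_k}^k\}_{k=0}^{\infty}$ by passing to the auxiliary partial composition
\[
y^k := Q_{j_k-1}^k x^k, \qquad \text{where } Q_j^k := U_j^k U_{j-1}^k \cdots U_1^k, \quad Q_0^k := \limfunc{Id}.
\]
The workhorse throughout will be Fact \ref{f-6}(iii), which bounds the individual increments $\|Q_l^k x - Q_{l-1}^k x\|$ by the total displacement $\|U_k x - x\|$. I will repeatedly use that each $Q_j^k$ is quasi-nonexpansive with the common fixed point $z \in C \subseteq F_0$ (as a composition of QNE operators sharing this fixed point), so $\|Q_j^k w - z\| \leq \|w - z\|$ for every $w$; in particular $y^k \in S$ whenever $x^k \in S = B(z, R)$.

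For parts (i) and (ii), I apply Fact \ref{f-6}(iii) with $R$ equal to the radius of $S$: combined with the uniform bound $\rho > 0$, the assumption $\|U_k x^k - x^k\| \to 0$ gives $\|Q_l^k x^k - Q_{l-1}^k x^k\| \to 0$ for every $l \in J$. This delivers simultaneously $\|U_{j_k}^k y^k - y^k\| = \|Q_{j_k}^k x^k - Q_{j_k-1}^k x^k\| \to 0$ and $\|y^k - x^k\| \leq \sum_{l=1}^{p} \|Q_l^k x^k - Q_{l-1}^k x^k\| \to 0$. In case (i), weak convergence $x^{n_k} \rightharpoonup y$ transfers to $y^{n_k} \rightharpoonup y$, and the weak $C_i$-regularity of $\{U_{j_k}^k\}$ over $S$ forces $y \in C_i$; in case (ii), the $C_i$-regularity yields $d(y^k, C_i) \to 0$, hence $d(x^k, C_i) \to 0$ by the triangle inequality. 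The statements for all $i \in I$ then follow either from $y \in \bigcap_i C_i = C$ or, via the regularity of $\{C_i \mid i \in I\}$ over $S$ applied to $\max_i d(x^k, C_i) \to 0$, from $d(x^k, C) \to 0$.

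For part (iii), I apply Fact \ref{f-6}(iii) with the sharper choice $z := P_C x$ and $R := d(x,C)$, admissible because $C \subseteq F_0$, to obtain
\[
\sum_{l=1}^{p} \|Q_l^k x - Q_{l-1}^k x\|^2 \;\leq\; \frac{2\, d(x,C)\, \|U_k x - x\|}{\rho}.
\]
Applying the linear $C_i$-regularity of $\{U_{j_k}^k\}$ at $y := Q_{j_k-1}^k x \in S$, and using $\delta_i \geq 1$ together with $j_k \leq p$ and the triangle inequality, I will bundle everything into a single Cauchy--Schwarz estimate:
\[
d(x, C_i) \;\leq\; \sum_{l=1}^{j_k-1}\!\|Q_l^k x - Q_{l-1}^k x\| + \delta_i \|Q_{j_k}^k x - Q_{j_k-1}^k x\| \;\leq\; \delta_i \sqrt{p}\,\Bigl(\sum_{l=1}^{p}\|Q_l^k x - Q_{l-1}^k x\|^2\Bigr)^{\!1/2}.
\]
Squaring, combining with $d(x,C)^2 \leq \kappa^2 \max_i d(x,C_i)^2$, and cancelling one factor of $d(x,C)$ (the case $d(x,C)=0$ being trivial) produces the claimed modulus $2p\kappa^2\delta^2/\rho$.

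The main technical obstacle is ensuring that the auxiliary iterates $y^k$ remain inside $S$; this is precisely why the hypothesis fixes $S$ to be a ball $B(z, R)$ centred at a point $z \in C$, and it relies crucially on the inclusion $C \subseteq F_0$ so that the partial products $Q_j^k$ inherit $z$ as a common fixed point. The factor $p$ in the linear-regularity modulus is the price paid by the Cauchy--Schwarz step that converts a sum of $p$ increments into an $\ell^2$-quantity.
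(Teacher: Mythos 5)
Your proposal is correct and follows essentially the same route as the paper's proof: the same partial products $Q_j^k$, the same appeal to Fact \ref{f-6}(iii) (first with the radius $R$ of $S$ for parts (i)--(ii), then with $z=P_Cx$ and $R=d(x,C)$ for part (iii)), the same auxiliary sequence $y^k=Q_{j_k-1}^k x^k$ kept in $S$ via the common fixed point $z\in C\subseteq F_0$, and the same triangle-inequality plus Cauchy--Schwarz step (using $\delta_i\geq 1$) leading to the modulus $2p\kappa^2\delta^2/\rho$. Your explicit treatment of the degenerate case $d(x,C)=0$ and of the transfer $y^{n_k}\rightharpoonup y$ in part (i) only makes explicit what the paper leaves implicit.
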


\begin{proof}
Let $z\in C$ and $\{x^{k}\}_{k=0}^{\infty }\subseteq S$. Denote $%
Q_{j}^{k}:=U_{j}^{k}U_{j-1}^{k}...U_{1}^{k}$, $Q_{0}^{k}:=\limfunc{Id}$ and $%
x_{j}^{k}:=Q_{j}^{k}x^{k}$, $j\in J$, $k=0,1,2,\ldots$. Clearly, $%
Q_{j}^{k}=U_{j}^{k}Q_{j-1}^{k}$, $x_{0}^{k}=x^{k}$ and $%
x_{j}^{k}=U_{j}^{k}x_{j-1}^{k}$, $j\in J$. By Fact \ref{f-6}(iii), for any $%
j\in J$, we have 
\begin{equation}
0\leq \frac{\rho _{j}^{k}}{2R}\Vert U_{j}^{k}x_{j-1}^{k}-x_{j-1}^{k}\Vert
^{2}\leq \frac{1}{2R}\sum_{l=1}^{p}\rho _{l}^{k}\Vert
U_{l}^{k}x_{l-1}^{k}-x_{l-1}^{k}\Vert ^{2}\leq \Vert U_{k}x^{k}-x^{k}\Vert 
\text{,}  \label{e-2}
\end{equation}%
$j\in J$. Suppose that $\lim_{k}\Vert U_{k}x^{k}-x^{k}\Vert =0$. Note that
the assumption $C\neq \emptyset $ and the quasi-nonexpansivity of $U_{l}^{k}$%
, $l\in J$, imply $\{x_{l}^{k}\}_{k=0}^{\infty }\subseteq S$, $l\in J$.
Inequalities (\ref{e-2}) and the inequality $\rho >0$ yield 
\begin{equation}
\lim_{k}\Vert U_{l}x_{l-1}^{k}-x_{l-1}^{k}\Vert =0
\end{equation}%
for all $l\in J$. For a sequence $\{j_{k}\}_{k=0}^{\infty }$, denote $%
y^{k}=x_{j_{k}-1}^{k}$. Clearly, $\{y^{k}\}_{k=0}^{\infty }\subseteq S$.

(i) Let $y$ be a weak cluster point of $\{x^{k}\}_{k=0}^{\infty }$, $i\in I$
and $\{j_{k}\}_{k=0}^{\infty }\subseteq J$ be such that the sequence $%
\{U_{j_{k}}^{k}\}_{k=0}^{\infty }$ is weakly $C_{i}$-regular over $S$.
Suppose that $\lim_{k}\Vert U_{k}x^{k}-x^{k}\Vert =0$. Inequalities (\ref%
{e-2}) with $j=j_{k}$, $k=0,1,2,\ldots,$ and the inequality $\rho >0$ yield $%
\lim_{k}\Vert U_{j_{k}}^{k}y^{k}-y^{k}\Vert =0$. Thus $y\in C_{i}$, that is, 
$\{U_{k}\}_{k=0}^{\infty }$ is weakly $C_{i}$-regular over $S$. If this
property holds for all $i\in I$, then $y\in C$, that is, $%
\{U_{k}\}_{k=0}^{\infty }$ is weakly $C$-regular over $S$.

(ii) Let $i\in I$ and $\{j_{k}\}_{k=0}^{\infty }\subseteq J$ be such that
the sequence $\{U_{j_{k}}^{k}\}_{k=0}^{\infty }$ is weakly $C_{i}$-regular
over $S$. Since $\rho >0$, inequalities (\ref{e-2}) yield 
\begin{equation}
\lim_{k}\Vert U_{j_{k}}^{k}y^{k}-y^{k}\Vert =0\text{.}  \label{e-Uik1}
\end{equation}%
By the $C_{i}$-regularity of $\{U_{j_{k}}^{k}\}_{k=0}^{\infty }$ over $S$
and by (\ref{e-Uik1}), we have 
\begin{equation}
\lim_{k}d(y^{k},C_{i})=0\text{.}  \label{e-dyjk1}
\end{equation}%
The definition of the metric projection and the triangle inequality yield%
\begin{eqnarray}
d(x^{k},C_{i}) &=&\Vert x^{k}-P_{C_{i}}x^{k}\Vert \leq \Vert
x^{k}-P_{C_{i}}y^{k}\Vert =\Vert
\sum_{j=0}^{j_{k}-2}(x_{j}^{k}-x_{j+1}^{k})+(y^{k}-P_{C_{i}}y^{k})\Vert 
\notag \\
&\leq &\sum_{j=0}^{j_{k}-2}\left\Vert x_{j}^{k}-x_{j+1}^{k}\right\Vert
+\left\Vert y^{k}-P_{C_{i}}y^{k}\right\Vert \leq \sum_{j=0}^{p-1}\Vert
U_{j+1}^{k}x_{j}^{k}-x_{j}^{k}\Vert +d(y^{k},C_{i})\text{.}
\end{eqnarray}%
By (\ref{e-2}), the inequality $\rho >0$ and the assumption that $%
\lim_{k}\Vert U_{k}x^{k}-x^{k}\Vert =0$, we have 
\begin{equation}
\lim_{k}\sum_{j=0}^{p-1}\Vert U_{j+1}^{k}x_{j}^{k}-x_{j}^{k}\Vert =0.
\end{equation}
This together with (\ref{e-dyjk1}) leads to $\lim_{k}d(x^{k},C_{i})=0$, that
is, $\{U_{k}\}_{k=0}^{\infty }$ is boundedly $C_{i}$-regular. The proof of
the second part of (ii) follows directly from the definition of a regular
family of sets.

(iii) Let $i\in I$ be arbitrary and $\{j_{k}\}_{k=0}^{\infty }\subseteq J$
be such that the sequence $\{U_{j_{k}}^{k}\}_{k=0}^{\infty }$ is linearly $%
C_{i}$-regular over $S$. Let $x\in S$. By (\ref{e-2}) with $x^{k}=x$, $%
z=P_{C}x$ and $R=\Vert x-z\Vert =d(x,C)$, we get 
\begin{equation}
\sum_{j=1}^{p}\Vert x_{j}^{k}-x_{j-1}^{k}\Vert ^{2}\leq \frac{2d(x,C)}{\rho }%
\Vert U_{k}x-x\Vert \text{.}  \label{e-2a}
\end{equation}%
By the linear $C_{i}$-regularity of $\{U_{j_{k}}^{k}\}_{k=0}^{\infty }$ over 
$S$ with modulus $\delta _{i}$, 
\begin{equation}
d(y^{k},C_{i})\leq \delta _{i}\Vert U_{j_{k}}^{k}y^{k}-y^{k}\Vert
\label{e-2b}
\end{equation}%
for all $k=0,1,2,\ldots $. By the definition of the metric projection, the
triangle inequality, inequality (\ref{e-2b}) and the assumption that $\delta
_{i}\geq 1$, we have%
\begin{eqnarray}
d^{2}(x,C_{i}) &\leq &\Vert x-P_{C_{i}}y^{k}\Vert ^{2}\leq \left(
\sum_{l=1}^{j_{k}-1}\Vert x_{l}^{k}-x_{l-1}^{k}\Vert +\Vert
y^{k}-P_{C_{i}}y^{k}\Vert \right) ^{2}  \notag \\
&=&\left( \sum_{l=1}^{j_{k}-1}\Vert x_{l}^{k}-x_{l-1}^{k}\Vert
+d(y^{k},C_{i})\right) ^{2}\leq \left( \sum_{l=1}^{j_{k}-1}\Vert
x_{l}^{k}-x_{l-1}^{k}\Vert +\delta _{i}\Vert U_{j_{k}}^{k}y^{k}-y^{k}\Vert
\right) ^{2}  \notag \\
&\leq &\delta _{i}^{2}\left( \sum_{l=1}^{j_{k}-1}\Vert
x_{l}^{k}-x_{l-1}^{k}\Vert +\Vert U_{j_{k}}^{k}y^{k}-y^{k}\Vert \right)
^{2}=\delta _{i}^{2}\left( \sum_{l=1}^{j_{k}}\Vert
x_{l}^{k}-x_{l-1}^{k}\Vert \right) ^{2}  \notag \\
&\leq &\delta _{i}^{2}\left( \sum_{l=1}^{p}\Vert x_{l}^{k}-x_{l-1}^{k}\Vert
\right) ^{2}\text{,}
\end{eqnarray}%
$x\in S$, $k=0,1,2,\ldots $. The above inequalities and the Cauchy-Schwarz
inequality $\langle e,a\rangle ^{2}\leq p\Vert a\Vert ^{2}$ with $%
e=(1,1,...,1)\in 
\mathbb{R}
^{p}$ and $a=(a_{1},a_{2},...a_{p})\in 
\mathbb{R}
^{p}$, where $a_{j}:=\Vert x_{j}^{k}-x_{j-1}^{k}\Vert $, $j\in J$, yield 
\begin{equation}
d^{2}(x,C_{i})\leq p\delta _{i}^{2}\sum_{l=1}^{p}\Vert
x_{l}^{k}-x_{l-1}^{k}\Vert ^{2}\text{,}  \label{e-2c}
\end{equation}%
$x\in S$, $k=0,1,2,\ldots $. Now the linear bounded regularity of $%
\{C_{i}\mid \in I\}$ with modulus $\kappa $, (\ref{e-2a}) and (\ref{e-2c})
imply that 
\begin{equation}
d^{2}(x,C)\leq \kappa ^{2}d^{2}(x,C_{i})\leq \frac{2p\delta _{i}^{2}\kappa
^{2}}{\rho }d(x,C)\Vert U_{k}x-x\Vert \text{,}
\end{equation}%
$x\in S$, $k=0,1,2,\ldots $ for all $i\in I$. This gives 
\begin{equation}
d(x,C)\leq \frac{2p\delta ^{2}\kappa ^{2}}{\rho }\Vert U_{k}x-x\Vert \text{,}
\end{equation}%
$x\in S$, $k=0,1,2,\ldots ,$ that is, $\{U_{k}\}_{k=0}^{\infty }$ is
linearly $C$-regular over $S$ with modulus $2p\kappa ^{2}\delta ^{2}/\rho $.
\end{proof}

\begin{corollary}
\label{c-Rk2a} For each $k=0,1,2,\ldots ,$ let $U_{k}:=U_{m}^{k}U_{m-1}^{k}%
\ldots U_{1}^{k}$, where $U_{i}^{k}\colon \mathcal{H}\rightarrow \mathcal{H}$
is $\rho _{i}^{k}$-strongly quasi-nonexpansive, $i\in I:=\{1,\ldots ,m\}$.
Assume that $\rho :=\min_{i\in I}\inf_{k}\rho _{i}^{k}>0$ and $%
F_{0}:=\bigcap_{i\in I}F_{i}\neq \emptyset $, where $F_{i}:=\bigcap_{k\geq 0}%
\limfunc{Fix}U_{i}^{k}$. Moreover, let $S:=B(z,R)$ for some $z\in F_{0}$ and 
$R>0$.

\begin{enumerate}
\item[$\mathrm{(i)}$] Suppose that for any $i\in I$, $\{U_{i}^{k}\}_{k=0}^{%
\infty }$ is weakly regular over $S$. Then the sequence $\{U_{k}\}_{k=0}^{%
\infty }$ is also weakly regular over $S$.

\item[$\mathrm{(ii)}$] Suppose that for any $i\in I$, $\{U_{i}^{k}\}_{k=0}^{%
\infty }$ is regular over $S$ and the family $\{F_{i}\mid i\in I\}$ is
regular over $S$. Then the sequence $\{U_{k}\}_{k=0}^{\infty }$ is also
regular over $S$.

\item[$\mathrm{(iii)}$] Suppose that for any $i\in I$, $\{U_{i}^{k}%
\}_{k=0}^{\infty }$ is linearly regular over $S$ with modulus $\delta
_{i}\geq 1$, $\delta :=\min_{i\in I}\delta _{i}>0$, and the family $%
\{F_{i}\mid i\in I\}$ is linearly regular over $S$ with modulus $\kappa >0$.
Then the sequence $\{U_{k}\}_{k=0}^{\infty }$ is regular over $S$ with
modulus $2m\kappa ^{2}\delta ^{2}/\rho $.
\end{enumerate}
\end{corollary}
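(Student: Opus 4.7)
The plan is to obtain all three parts as immediate specializations of Theorem \ref{t-Rk2}. Concretely, I would instantiate Theorem \ref{t-Rk2} with $p := m$, $J := I$, $C_i := F_i$ for every $i \in I$, and, for each fixed $i$, choose the constant index sequence $j_k \equiv i$. Under this dictionary, the composition operator $U_k = U_m^k U_{m-1}^k \ldots U_1^k$ of the corollary coincides with the one in the theorem, and the distinguished subsequence $\{U_{j_k}^k\}_{k=0}^\infty$ of the theorem reduces to $\{U_i^k\}_{k=0}^\infty$, which is exactly the sequence the hypotheses of the corollary speak about.

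Before invoking the theorem, I would verify its standing hypotheses are met. Each $\limfunc{Fix} U_i^k$ is closed and convex by Fact \ref{f-3}, and intersections preserve these properties, so each $F_i$ is closed and convex and thereby legitimately plays the role of $C_i$. The intersection $C := \bigcap_{i\in I} C_i = \bigcap_{i \in I} F_i$ equals $F_0$, which is nonempty by hypothesis, so in particular $C \subseteq F_0$ trivially. The ball $S = B(z,R)$ with $z \in F_0 = C$ matches the theorem's requirement on $S$, and the uniform constant $\rho = \min_{i\in I}\inf_k \rho_i^k > 0$ is precisely the SQNE parameter that Theorem \ref{t-Rk2} demands from the factors.

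Parts (i), (ii), (iii) then follow from Theorem \ref{t-Rk2}(i), (ii), (iii) respectively. For (i), the constant choice $j_k \equiv i$ transports weak $F_i$-regularity from $\{U_i^k\}_k$ to $\{U_k\}_k$ over $S$ for each $i$; since this holds for every $i \in I$, weak $F_0$-regularity of $\{U_k\}_k$ over $S$ follows. For (ii) and (iii), the additional hypothesis on the family $\{F_i\mid i \in I\}$ provides exactly the (linear) regularity condition on $\{C_i\mid i \in I\}$ required by the theorem, and the modulus produced by Theorem \ref{t-Rk2}(iii), namely $2p\kappa^2\delta^2/\rho$, becomes $2m\kappa^2\delta^2/\rho$ after setting $p=m$.

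Since the argument is a pure substitution into Theorem \ref{t-Rk2}, there is essentially no obstacle; the only point worth flagging is the closedness and convexity of the sets $F_i$, which is a straightforward consequence of Fact \ref{f-3} but is needed for the sets $C_i := F_i$ to fit the framework of the theorem.
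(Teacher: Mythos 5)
Your proposal is correct and is essentially the paper's own proof: the authors likewise obtain the corollary by substituting $J=I$, $C_{i}=F_{i}$ and $j_{k}=i$ into Theorem \ref{t-Rk2}, with $p=m$ turning the modulus $2p\kappa^{2}\delta^{2}/\rho$ into $2m\kappa^{2}\delta^{2}/\rho$. Your extra check that each $F_{i}$ is closed and convex (via Fact \ref{f-3}) is a sensible, if routine, addition.
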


\begin{proof}
It suffices to substitute $J=I$, $C_{i}=F_{i}$ and $j_{k}=i$, $%
k=0,1,2,\ldots $ in Theorem \ref{t-Rk2}.
\end{proof}

\begin{corollary}
\label{c-Rk2b} Let $U:=U_{m}U_{m-1}\ldots U_{1}$, where $U_{i}\colon 
\mathcal{H}\rightarrow \mathcal{H}$ is $\rho _{i}$-strongly
quasi-nonexpansive, $i\in I:=\{1,\ldots ,m\}$. Assume that $\rho
:=\min_{i\in I}\rho _{i}>0$ and $F_{0}:=\bigcap_{i\in I}\limfunc{Fix}%
U_{i}\neq \emptyset $. Moreover, let $S:=B(z,R)$ for some $z\in F_{0}$ and $%
R>0$.

\begin{enumerate}
\item[$\mathrm{(i)}$] Suppose that for any $i\in I$, $U_{i}$ is weakly
regular over $S$. Then $U$ is also weakly regular over $S$.

\item[$\mathrm{(ii)}$] Suppose that for any $i\in I$, $U_{i}$ is regular
over $S$ and the family $\{\limfunc{Fix}U_{i}\mid i\in I\}$ is regular over $%
S$. Then $U$ is also regular over $S$.

\item[$\mathrm{(iii)}$] Suppose that for any $i\in I$, $U_{i}$ is linearly
regular over $S$ with modulus $\delta _{i}\geq 1$, $\delta :=\min_{i\in
I}\delta _{i}>0$, and the family $\{\limfunc{Fix}U_{i}\mid i\in I\}$ is
linearly regular over $S$ with modulus $\kappa >0$. Then $U$ is linearly
regular over $S$ with modulus $2m\kappa ^{2}\delta ^{2}/\rho $.
\end{enumerate}
\end{corollary}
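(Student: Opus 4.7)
The plan is to reduce Corollary \ref{c-Rk2b} to Corollary \ref{c-Rk2a} by taking constant sequences of operators. Specifically, for each $i\in I$ and every $k=0,1,2,\ldots$, I would set $U_i^k := U_i$. Then $\rho_i^k = \rho_i$ works as a strong quasi-nonexpansivity constant, so $\rho := \min_{i\in I}\inf_k \rho_i^k = \min_{i\in I}\rho_i > 0$, and moreover $F_i := \bigcap_{k\geq 0}\operatorname{Fix} U_i^k = \operatorname{Fix} U_i$, which gives $F_0 = \bigcap_{i\in I}\operatorname{Fix} U_i \neq \emptyset$ as required in Corollary \ref{c-Rk2a}. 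The composition in the hypothesis of Corollary \ref{c-Rk2a} becomes $U_k = U_m^k U_{m-1}^k \cdots U_1^k = U_m U_{m-1}\cdots U_1 = U$, so the sequence $\{U_k\}_{k=0}^\infty$ produced by Corollary \ref{c-Rk2a} is the constant sequence equal to $U$.

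Next I would observe that for a constant sequence $U_i^k \equiv U_i$, each of the three regularity notions of Definition \ref{d-RS} is equivalent to the corresponding single-operator notion of Definition \ref{d-R}. This is essentially a tautology: plugging $U_k = U_i$ into \eqref{e-Wk}, \eqref{e-Rk}, \eqref{e-LRk} yields precisely the defining conditions for weak/regular/linear $\operatorname{Fix} U_i$-regularity of $U_i$ over $S$. Similarly, the constant sequence $\{U\}_{k=0}^\infty$ being (weakly, linearly) regular over $S$ is the same as $U$ itself being (weakly, linearly) regular over $S$.

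With these identifications in place, parts (i), (ii) and (iii) of Corollary \ref{c-Rk2b} follow immediately from parts (i), (ii) and (iii) of Corollary \ref{c-Rk2a}, respectively; in case (iii) the modulus $2m\kappa^2 \delta^2/\rho$ is inherited unchanged, since $\rho$, $\delta$ and $\kappa$ coincide with their counterparts in the $U_i^k \equiv U_i$ setting.

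There is essentially no obstacle here: the work has already been done in Theorem \ref{t-Rk2} and Corollary \ref{c-Rk2a}. The only point worth stating explicitly, to keep the reader from worrying, is the equivalence between the single-operator and constant-sequence versions of the three regularity notions; once that is noted, the proof reduces to a single sentence invoking Corollary \ref{c-Rk2a}.
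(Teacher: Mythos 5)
Your proposal is correct and coincides with the paper's own proof, which likewise obtains Corollary \ref{c-Rk2b} by substituting $U_{i}^{k}=U_{i}$ for all $k=0,1,2,\ldots$ and $i\in I$ in Corollary \ref{c-Rk2a}. Your explicit remark that the constant-sequence regularity notions of Definition \ref{d-RS} reduce tautologically to the single-operator notions of Definition \ref{d-R} is a harmless elaboration of the same one-line argument.
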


\begin{proof}
It suffices to substitute $U_{i}^{k}=U_{i}$ for all $k=0,1,2,\ldots $ and $%
i\in I$ in Corollary \ref{c-Rk2a}.
\end{proof}

\begin{example}
\label{ex-BRk}%
\rm\ %
Let $S_{i}:\mathcal{H}\rightarrow \mathcal{H}$ be QNE, $C_{i}:=\limfunc{Fix}%
S_{i}$, $i\in I:=\{1,2,...,m\}$ and $C:=\bigcap_{i\in I}C_{i}\neq \emptyset $%
. Set $J:=\{1,2,...,p\}$ and $S_{i,\lambda }:=\limfunc{Id}+\lambda (S_i-%
\limfunc{Id})$, where $\lambda \in \lbrack 0,1]$.

\begin{enumerate}
\item[(a)] (\textit{Block iterative sequence}) Let $J_{k}\subseteq J$ be an
ordered subset, $k=0,1,2,\ldots $. Let 
\begin{equation}
T_{j}^{k}:=\limfunc{Id}+\lambda _{j}^{k}(\sum_{i\in I_{j}^{k}}\omega
_{ij}^{k}S_{i}-\limfunc{Id})=\sum_{i\in I_{j}^{k}}\omega
_{ij}^{k}S_{i,\lambda _{j}^{k}}\text{,}  \label{e-Tjk1}
\end{equation}%
where $I_{j}^{k}\subseteq I$, $\omega _{ij}^{k}\geq \delta >0$ for all $i\in
I_{j}^{k}$, $\sum_{i\in I_{j}^{k}}\omega _{ij}^{k}=1$, $j\in J_{k}$, $%
k=0,1,2,\ldots $, and 
\begin{equation}
T_{k}:=\prod_{j\in J_{k}}T_{j}^{k}\text{.}  \label{e-Tk1}
\end{equation}%
The block iterative methods for solving the consistent convex feasibility
problem \cite{Ceg12} can be represented in the form $x^{k+1}=T_{k}x^{k}$
with a sequence of operators $T_{k}$ given by (\ref{e-Tk1}), where $%
T_{j}^{k} $ are defined by (\ref{e-Tjk1}). Suppose that $\underline{\lambda }%
:=\inf_{k\geq 0}\min_{j\in J_{k}}\lambda _{j}^{k}>0$ and $\bar{\lambda}%
:=\sup_{k\geq 0}\max_{j\in J_{k}}\lambda _{j}^{k}<1$. Put $\rho
_{j}^{k}:=(1-\lambda _{j}^{k})/\lambda _{j}^{k}$, $j\in J_{k}$, $%
k=0,1,2,\ldots $, and $\rho :=\inf_{k\geq 0}\min_{j\in J_{k}}\rho _{j}^{k}$.
Then $S_{i,\lambda _{j}^{k}}$ is $\rho _{j}^{k}$-SQNE, $i\in I_{j}^{k}$ (see
Corollary \ref{c-1}(ii)). Clearly, $\rho _{j}^{k}\geq \rho \geq (1-\bar{%
\lambda})/\bar{\lambda}>0$ for all $j\in J_{k}$ and $k=0,1,2,\ldots $.
Suppose that $I^{k}:=\bigcup_{j\in J_{k}}I_{j}^{k}=I$ for all $%
k=0,1,2,\ldots $. Let $i\in I$ be arbitrary but fixed, and $j_{k}\in J_{k}$
and $i_{k}\in I_{j_{k}}^{k}$ be such that $i_{i_{k},j_{k}}=i$. By Facts \ref%
{f-2}(iv) and \ref{f-5}(i), $T_{j}^{k}$ is $\rho _{j}^{k}$-SQNE, where $\rho
_{j}^{k}\geq \rho >0$ for all $j\in J_{k}$ and $k=0,1,2,\ldots $. Moreover, $%
F_{0}:=\bigcap_{k\geq 0}\bigcap_{j\in J_{k}}\limfunc{Fix}T_{j}^{k}=C$.
Suppose that each $S_{i}$, $i\in I$, is weakly (boundedly, boundedly
linearly) regular. Then, by Proposition \ref{l-rel-BR}, the sequence $%
\{S_{i,\lambda _{j_{k}}^{k}}\}_{k=0}^{\infty }$ is weakly (boundedly,
boundedly linearly) $C_{i}$-regular. Let us now separately consider the
above three different types of regularity.

\begin{enumerate}
\item[(i)] Suppose first that each $S_{i}$, $i\in I$, is weakly regular.
Then, by Theorem \ref{t-Rk1} (i), the sequence $\{T_{j_{k}}^{k}\}_{k=0}^{%
\infty }$ is weakly $C_{i}$-regular and, consequently, by Theorem \ref{t-Rk2}
(i), the sequence $\{T_{k}\}_{k=0}^{\infty }$ is weakly $C_{i}$-regular.
Moreover, since $i\in I$ is arbitrary, the sequence $\{T_{k}\}_{k=0}^{\infty
}$ is also weakly $C$-regular.

\item[(ii)] Suppose now that each $S_{i}$, $i\in I$, is boundedly regular.
Then, by Theorem \ref{t-Rk1} (ii), the sequence $\{T_{j_{k}}^{k}\}_{k=0}^{%
\infty }$ is boundedly $C_{i}$-regular and, consequently, by Theorem \ref%
{t-Rk2} (ii), the sequence $\{T_{k}\}_{k=0}^{\infty }$ is boundedly $C_{i}$%
-regular. Moreover, if we assume that the family $\{C_{i}\mid i\in I\}$ is
boundedly regular, then the sequence $\{T_{k}\}_{k=0}^{\infty }$ is
boundedly $C$-regular.

\item[(iii)] Finally, suppose that each $S_{i}$, $i\in I$, is boundedly
linearly regular and that each subfamily of $\{C_{i}\mid i\in I\}$ is
boundedly linearly regular. Then, by Theorem \ref{t-Rk1} (iii), the sequence 
$\{T_{j_{k}}^{k}\}_{k=0}^{\infty }$ is boundedly linearly $C_{i}$-regular
and, consequently, by Theorem \ref{t-Rk2} (iii), the sequence $%
\{T_{k}\}_{k=0}^{\infty }$ is also boundedly linearly $C_{i}$-regular.
Moreover, the sequence $\{T_{k}\}_{k=0}^{\infty }$ is boundedly linearly $C$%
-regular too.
\end{enumerate}

\item[(b)] (\textit{String averaging sequence}) Let $%
I_{j}^{k}:=(i_{1j}^{k},i_{2j}^{k},...,i_{sj}^{k})\subseteq I$ be an ordered
subset, where $s\geq 1$, $j\in J$ and $k=0,1,2,\ldots $. Let%
\begin{equation}
T_{j}^{k}:=\prod_{i\in I_{j}^{k}}S_{i,\lambda _{ij}^{k}}\text{,}
\label{e-Tjk2}
\end{equation}%
where $I_{j}^{k}\subseteq I$, $\lambda _{ij}^{k}\in (0,1)$, $i\in I_{j}^{k}$%
, $j\in J$, $k=0,1,2,\ldots $, and%
\begin{equation}
T_{k}:=\sum_{j\in J_{k}}\nu _{j}^{k}T_{j}^{k}\text{,}  \label{e-Tk2}
\end{equation}%
where $J_{k}\subseteq J$, $\nu _{j}^{k}\geq \delta >0$, $j\in J_{k}$, $%
\sum_{j\in J_{k}}\nu _{j}^{k}=1$ and $k=0,1,2,\ldots $. The string averaging
methods for solving the convex feasibility problem \cite{RZ16} can be
represented in the form $x^{k+1}=T_{k}x^{k}$ with a sequence of operators $%
T_{k}$ given by (\ref{e-Tk2}), where $T_{j}^{k}$ are defined by (\ref{e-Tjk2}%
). Denote $\rho _{ij}^{k}:=(1-\lambda _{ij}^{k})/\lambda _{ij}^{k}$, $i\in
I_{j}^{k}$, $j\in J$, $k=0,1,2,\ldots $, and $\rho :=\inf_{k\geq
0}\min_{i\in I_{j}^{k},j\in J}\rho _{ij}^{k}$, and suppose that $\underline{%
\lambda }:=\inf_{k\geq 0}\min_{i\in I_{j}^{k},j\in J}\lambda _{ij}^{k}>0$
and $\bar{\lambda}:=\sup_{k\geq 0}\max_{i\in I_{j}^{k},j\in J}\lambda
_{ij}^{k}<1$. Similarly to the situation in (a), $S_{i,\lambda _{ij}^{k}}$
is $\rho _{ij}^{k}$-SQNE and $\rho _{ij}^{k}\geq \rho \geq (1-\bar{\lambda})/%
\bar{\lambda}>0$ for all $i\in I_{j}^{k}$, $j\in J$ and $k\geq 0$. By Fact %
\ref{f-6}(i), $T_{j}^{k}$ is $\rho /m$-SQNE and $\limfunc{Fix}%
T_{j}^{k}=\bigcap_{i\in I_{j}^{k}}C_{i}$. Suppose that $I^{k}:=\bigcup_{j\in
J_{k}}I_{j}^{k}=I$ for all $k=0,1,2,\ldots $. Then $F_{0}:=\bigcap_{k\geq
0}\bigcap_{j\in J_{k}}\limfunc{Fix}T_{j}^{k}=C$. Let $i\in I$ be arbitrary
but fixed, and $i_{k}\in \{1,2,...,s\}$ and $j_{k}\in J$ be such that $%
i_{i_{k},j_{k}}^{k}=i$. Similarly to the situation in (a), by interchanging
Theorem \ref{t-Rk1} with Theorem \ref{t-Rk2}, one can obtain corresponding
statements to (i), (ii) and (iii), respectively.
\end{enumerate}
\end{example}

\begin{remark}
\rm\ %
We now comment on the existing literature, where one can find preservation
of regularity properties under convex combinations and compositions of
operators.

The preservation of weak regularity for a single operator presented in
Corollaries \ref{c-Rk1b} (i) and \ref{c-Rk2b} (i) can be found in \cite[%
Theorem 4.1 and 4.2]{Ceg15a}, respectively. The results concerning bounded
regularity from Corollaries \ref{c-Rk1b} (ii) and \ref{c-Rk2b} (ii) were
shown in \cite[Theorem 4.10 and Theorem 4.11]{CZ14}. Statement (iii) from
the above-mentioned corollaries regarding linear regularity is new, as far
as we know, even in this simple setting.

The preservation of weak regularity for a sequence of operators (Corollaries %
\ref{c-Rk1a} (i) and \ref{c-Rk2a} (i)) was established in \cite[Example 4.5]%
{Ceg15}. These results also follow from \cite[Lemma 3.4]{RZ16}. Preservation
of bounded regularity for a sequence of operators (Corollaries \ref{c-Rk1a}
(ii) and \ref{c-Rk2a} (ii)) was proved in \cite[Lemma 4.10]{Zal14} and \cite[%
Lemma 3.5]{RZ16}. The preservation of linear regularity for a sequence of
operators has not been studied so far.

We would like to emphasize that Theorems \ref{t-Rk1} and \ref{t-Rk2} are
more general than all of the above results.
\end{remark}

\section{Applications}

\label{s-app}

In this section we show how to apply weakly (boundedly, boundedly linearly)
regular sequences of operators to methods for solving convex feasibility and
variational inequality problems.

\subsection{Applications to convex feasibility problems}

\begin{theorem}
\label{th:main} Let $C\subseteq \mathcal{H}$ be nonempty, closed and convex,
and for each $k=0,1,2,\ldots $, let $U_{k}\colon \mathcal{H}\rightarrow 
\mathcal{H}$ be $\rho _{k}$-strongly quasi-nonexpansive with $\rho
:=\inf_{k}\rho _{k}>0$ and $C\subseteq F:=\bigcap_{k=0}^{\infty }\limfunc{Fix%
}U_{k}$. Moreover, let $x^{0}\in \mathcal{H}$ and for each $k=0,1,2,\ldots $%
, let $x^{k+1}:=U_{k}x^{k}$.

\begin{enumerate}
\item[(i)] If $\{U_{k}\}_{k=0}^{\infty }$ is weakly $C$-regular, then $x^{k}$
converges weakly to some $x^{\ast }\in C$.

\item[(ii)] If $\{U_{k}\}_{k=0}^{\infty }$ is boundedly $C$-regular, then
the convergence to $x^{\ast }$ is in norm.

\item[(iii)] If $\{U_{k}\}_{k=0}^{\infty }$ is boundedly linearly $C$%
-regular, then the convergence is $R$-linear, that is, $\Vert x^{k}-x^{\ast
}\Vert \leq 2d(x^{0},C)q^{k}$, $k=0,1,2,\ldots,$ for some $q\in (0,1)$.
\end{enumerate}
\end{theorem}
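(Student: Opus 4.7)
The plan is to apply Lemma \ref{f-6a} and Fact \ref{f-7} in a modular way: the SQNE hypothesis will give Fej\'er monotonicity with respect to $F\supseteq C$ together with the essential asymptotic regularity $\|U_k x^k - x^k\|\to 0$, and the three flavors of $C$-regularity will then translate this asymptotic regularity into the three flavors of convergence.

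First, since each $U_k$ is $\rho_k$-SQNE with $\rho=\inf_k\rho_k>0$ and $F\neq\emptyset$ (as $C\subseteq F$ and $C\neq\emptyset$), Lemma \ref{f-6a} yields that $\{x^k\}_{k=0}^\infty$ is Fej\'er monotone with respect to $F$, hence bounded and also Fej\'er monotone with respect to $C$, and moreover $\lim_k \|U_k x^k - x^k\|=0$. For part (i), every weak cluster point $y$ of $\{x^k\}$ is captured by weak $C$-regularity: there is a subsequence $x^{n_k}\rightharpoonup y$ with $\|U_kx^k-x^k\|\to 0$, so $y\in C$; Fact \ref{f-7}(i) then produces a weak limit $x^\ast\in C$. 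For part (ii), bounded $C$-regularity applied to the bounded sequence $\{x^k\}$ yields $d(x^k,C)\to 0$, and Fact \ref{f-7}(ii) upgrades the weak limit from (i) to a norm limit.

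For part (iii), fix $z:=P_C x^0$ and $R:=d(x^0,C)$, so that by Fej\'er monotonicity the entire orbit lies in $S:=B(z,R)$; let $\delta$ be a modulus of bounded linear $C$-regularity of $\{U_k\}$ over $S$. The SQNE inequality with the fixed point $z\in C\subseteq F$ combined with $\|x^{k+1}-P_Cx^k\|\ge d(x^{k+1},C)$ gives
\begin{equation}
d^2(x^{k+1},C)\le d^2(x^k,C)-\rho\,\|U_kx^k-x^k\|^2.
\end{equation}
Plugging in $\|U_kx^k-x^k\|\ge d(x^k,C)/\delta$ from linear regularity yields
\begin{equation}
d^2(x^{k+1},C)\le\bigl(1-\rho/\delta^2\bigr)\,d^2(x^k,C),
\end{equation}
so with $q:=\sqrt{1-\rho/\delta^2}$ one gets $d(x^{k+1},C)\le q\,d(x^k,C)$. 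Note the coefficient is automatically in $[0,1)$: indeed, the first display forces $\rho\|U_kx^k-x^k\|^2\le d^2(x^k,C)$, and combining with the linear regularity estimate forces $\rho\le\delta^2$ whenever $x^k\notin C$. Fact \ref{f-7}(iii) then delivers both the existence of the strong limit $x^\ast\in C$ (consistent with parts (i) and (ii)) and the bound $\|x^k-x^\ast\|\le 2d(x^0,C)q^k$.

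The main obstacle is essentially bookkeeping rather than a conceptual difficulty: one must verify that the single SQNE descent inequality, applied with the \emph{point-dependent} choice $z=P_C x^k$ (or $z=P_Cx^0$), cleanly produces a one-step contraction in $d(\,\cdot\,,C)$, and one must observe the automatic compatibility $\rho\le\delta^2$ that guarantees $q<1$. Everything else is a direct invocation of Fact \ref{f-7} applied to the Fej\'er monotone sequence $\{x^k\}$.
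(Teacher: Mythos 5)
Your proof is correct and follows essentially the same route as the paper's: Lemma \ref{f-6a} gives Fej\'er monotonicity and $\Vert U_kx^k-x^k\Vert\to 0$, Fact \ref{f-7} converts the three regularity hypotheses into the three convergence modes, and in (iii) the SQNE inequality at the point-dependent projection $z=P_Cx^k$ yields $d(x^{k+1},C)\le\sqrt{1-\rho/\delta^2}\,d(x^k,C)$, exactly as in the paper (your extra remark that $\rho\le\delta^2$ automatically whenever $x^k\notin C$ is a nice touch the paper omits). The only quibble is the parenthetical suggestion that $z=P_Cx^0$ would serve equally well in the one-step contraction --- it would not, since one needs $\Vert x^k-z\Vert=d(x^k,C)$ --- but your actual derivation uses the correct choice, so this does not affect the argument.
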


\begin{proof}
By the definition of an SQNE operator, for any $z\in C$ we have 
\begin{equation}
\Vert x^{k+1}-z\Vert ^{2}=\Vert U_{k}x^{k}-z\Vert ^{2}\leq \Vert
x^{k}-z\Vert ^{2}-\rho _{k}\Vert U_{k}x^{k}-x^{k}\Vert ^{2}  \label{e-sSQNE}
\end{equation}%
and Lemma \ref{f-6a}(ii) yields that 
\begin{equation}
\lim_{k\rightarrow \infty }\Vert U_{k}x^{k}-x^{k}\Vert =0.  \label{e-AR}
\end{equation}

(i) Let $\{U_{k}\}_{k=0}^{\infty }$ be weakly $C$-regular, $x^{\ast }$ be an
arbitrary weak cluster point of $\{x^{k}\}_{k=0}^{\infty }$ and let $%
x^{n_{k}}\rightharpoonup x^{\ast }$. Then $x^{\ast }\in C$. Since $x^{\ast }$
is an arbitrary weak cluster point of $\{x^{k}\}_{k=0}^{\infty }$, Fact \ref%
{f-7}(i) yields the weak convergence of the whole sequence $%
\{x^{k}\}_{k=0}^{\infty }$ to $x^{\ast }$.

(ii) Assume that $\{U_{k}\}_{k=0}^{\infty }$ is boundedly $C$-regular. This,
when combined with (\ref{e-AR}), gives $d(x^{k},C)\rightarrow 0$, which by
Fact \ref{f-7}(ii) implies that the convergence to $x^{\ast }$ is in norm.

(iii) The bounded linear $C$-regularity of $\{U_{k}\}_{k=0}^{\infty }$ and
the boundedness of $\{x^{k}\}_{k=0}^{\infty }$ imply that there is $\delta
>0 $ such that $\Vert U_{k}x^{k}-x^{k}\Vert \geq \delta ^{-1}d(x^{k},C)$
holds for each $k=0,1,2,\ldots $. Consequently, by substituting $z=P_{C}x^{k}
$ into (\ref{e-sSQNE}) and by the inequality $d(x^{k+1},C)\leq \Vert
x^{k+1}-P_{C}x^{k}\Vert $, we arrive at 
\begin{equation}
\rho \delta ^{-2}d^{2}(x^{k},C)\leq d^{2}(x^{k},C)-d^{2}(x^{k+1},C)\text{,}
\end{equation}%
$k=0,1,2,\ldots $. This, when combined with Fact \ref{f-7}(iii), leads to 
\begin{equation}
\Vert x^{k}-x^{\ast }\Vert \leq 2d(x^{0},C)\left( \sqrt{1-\rho /\delta ^{2}}%
\right) ^{k}\text{,}
\end{equation}%
$k=0,1,2,\ldots $, which completes the proof.
\end{proof}

\bigskip

The assumption that $\{U_{k}\}_{k=0}^{\infty }$ is weakly regular (boundedly
regular, boundedly linearly regular) is quite strong. Indeed, by Proposition %
\ref{l-sub-BR}, we have $\bigcap_{l=0}^{\infty}\limfunc{Fix}U_{m_{l}} =
\bigcap_{k=0}^{\infty }\limfunc{Fix}U_{k}$ for any sequence $%
\{m_{k}\}_{k=0}^{\infty }\subseteq \{k\}_{k=0}^\infty$. Consequently, we
cannot directly apply Theorem \ref{th:main} in the case of (almost) cyclic
or intermittent control. This is due to the fact that for some subsequence $%
\{m_k\}_{k=0}^\infty$, one could have $\bigcap_k\limfunc{Fix}U_{m_k}\neq
\bigcap_k \limfunc{Fix}U_k$. Nevertheless, Theorem \ref{th:main} can still
be indirectly applied to the above-mentioned controls as we now show.

\begin{theorem}
\label{th:main2} Let $C_{i}\subseteq \mathcal{H}$ be closed and convex, $%
i\in I:=\{1,\ldots ,m\}$, such that $C:=\bigcap_{i\in I}C_{i}\neq \emptyset $%
. For each $k=0,1,2,\ldots,$ let $U_{k}\colon \mathcal{H}\rightarrow 
\mathcal{H}$ be $\rho_{k}$-strongly quasi-nonexpansive with $C\subseteq
F:=\bigcap_{k=0}^{\infty }\limfunc{Fix}U_{k}$, and let $\rho :=\inf_{k}\rho
_{k}>0$. Moreover, let $x^{k}$ be generated by $x^{k+1}:=U_{k}x^{k}$, $%
k=0,1,2,\ldots$, where $x^{0}\in \mathcal{H}$ is arbitrary. In addition, let 
$\{n_{k}^{i}\}_{k=0}^{\infty }$, $i\in I$, be increasing sequences of
nonnegative integers with bounded growth, that is, $0<n_{k+1}^{i}-n_{k}^{i}%
\leq s$, $k=0,1,2,\ldots$, for some $s>0$. If for each $i\in I$, the
subsequence $\{U_{n_{k}^{i}}\}_{k=0}^{\infty }$ is:

\begin{enumerate}
\item[$\mathrm{(i)}$] weakly $C_{i}$-regular, then $\{x^{k}\}_{k=0}^{\infty
} $ converges weakly to some $x^{\ast }\in C$.

\item[$\mathrm{(ii)}$] boundedly $C_{i}$-regular and the family $\{C_{i}\mid
i\in I\}$ is boundedly regular, then the convergence to $x^{\ast }$ is in
norm.

\item[$\mathrm{(iii)}$] boundedly linearly $C_{i}$-regular and the family $%
\{C_{i}\mid i\in I\}$ is boundedly linearly regular, then the convergence is 
$R$-linear, that is, $\Vert x^{k}-x^{\ast }\Vert \leq 2d(x^{0},C)q^{k}$, $%
k=0,1,2,\ldots$, for some $q\in (0,1)$.
\end{enumerate}
\end{theorem}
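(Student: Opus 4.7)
The basic engine is Fej\'er monotonicity of $\{x^{k}\}$ with respect to $C$ (Lemma \ref{f-6a}(i)) together with $\lim_{l}\|x^{l+1}-x^{l}\|=\lim_{l}\|U_{l}x^{l}-x^{l}\|=0$ from Lemma \ref{f-6a}(ii). The bounded-growth hypothesis supplies the crucial geometric fact: for every integer $l\geq 0$ and every $i\in I$ there exists an index $\tilde n_{l}^{i}$ of the form $n_{k}^{i}$ with $l\leq \tilde n_{l}^{i}\leq l+s-1$ (take the smallest $k$ with $n_{k}^{i}\geq l$; since $n_{k}^{i}-n_{k-1}^{i}\leq s$ and $n_{k-1}^{i}<l$, one has $n_{k}^{i}<l+s$). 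Consequently $\|x^{\tilde n_{l}^{i}}-x^{l}\|\leq (s-1)\max_{l\leq p<l+s}\|x^{p+1}-x^{p}\|\to 0$ as $l\to\infty$.

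For (i), I would apply weak $C_{i}$-regularity of $\{U_{n_{k}^{i}}\}_{k=0}^{\infty}$ to the sequence $y^{k}:=x^{n_{k}^{i}}$, which satisfies $\|U_{n_{k}^{i}}y^{k}-y^{k}\|=\|x^{n_{k}^{i}+1}-x^{n_{k}^{i}}\|\to 0$; this forces every weak cluster point of $\{x^{n_{k}^{i}}\}_{k}$ to lie in $C_{i}$. Given any weak cluster point $y$ of $\{x^{l}\}$ along $x^{l_{p}}\rightharpoonup y$, the above estimate shows $x^{\tilde n_{l_{p}}^{i}}\rightharpoonup y$ as well, so $y\in C_{i}$ for every $i\in I$, whence $y\in C$. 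Fact \ref{f-7}(i) then yields weak convergence of the whole sequence to some $x^{\ast}\in C$.

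For (ii), bounded $C_{i}$-regularity applied to the (Fej\'er-bounded) sequence $\{x^{n_{k}^{i}}\}_{k}$ yields $\lim_{k}d(x^{n_{k}^{i}},C_{i})=0$, and the triangle inequality $d(x^{l},C_{i})\leq \|x^{l}-x^{\tilde n_{l}^{i}}\|+d(x^{\tilde n_{l}^{i}},C_{i})\to 0$ transfers this to the whole sequence. Bounded regularity of the family $\{C_{i}\mid i\in I\}$ then gives $d(x^{l},C)\to 0$, and norm convergence to $x^{\ast}$ follows from Fact \ref{f-7}(ii).

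For (iii), the main chain of estimates is as follows. Bounded linear $C_{i}$-regularity furnishes $d(x^{\tilde n_{l}^{i}},C_{i})\leq \delta_{i}\|x^{\tilde n_{l}^{i}+1}-x^{\tilde n_{l}^{i}}\|$; combining with the telescope above yields $d(x^{l},C_{i})\leq M\max_{l\leq p<l+s}\|x^{p+1}-x^{p}\|$ with $M:=s-1+\max_{i}\delta_{i}$. Using bounded linear regularity of $\{C_{i}\}$ with modulus $\kappa$, squaring and bounding the max by the sum of squares gives $d(x^{l},C)^{2}\leq \kappa^{2}M^{2}\sum_{p=l}^{l+s-1}\|x^{p+1}-x^{p}\|^{2}$. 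On the other hand, telescoping the SQNE inequality with $z=P_{C}x^{l}$ gives $\rho\sum_{p=l}^{l+s-1}\|x^{p+1}-x^{p}\|^{2}\leq d(x^{l},C)^{2}-d(x^{l+s},C)^{2}$. The two inequalities combine to produce $d(x^{l+s},C)\leq q_{0}\,d(x^{l},C)$ with $q_{0}:=\sqrt{1-\rho/(\kappa^{2}M^{2})}<1$. Applying Fact \ref{f-7}(iii) to the Fej\'er monotone subsequence $\{x^{ks}\}_{k}$ establishes its $R$-linear convergence to some $x^{\ast}\in C$, and Lemma \ref{th:Fejer2}(iii) then lifts this to $R$-linear convergence of the whole sequence (up to an adjustment of constants absorbed into a slightly larger $q$). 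The main obstacle is precisely this coupling: balancing the per-window decrement of $d(\cdot,C)^{2}$ provided by the SQNE property against the per-window linear-regularity estimate, so that the $s$-step contraction factor $q_{0}$ is strictly less than one.
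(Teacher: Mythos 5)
Your proposal is correct, but it takes a genuinely different route from the paper. The paper groups the iterations into blocks of length $s$: it defines the block operators $T_{k}$ as products of $s$ consecutive $U_{l}$'s, uses the bounded-gap hypothesis (together with Proposition \ref{l-sub-BR}) to place in each block a factor coming from the regular subsequence $\{U_{n_{k}^{i}}\}$, invokes the product-preservation result (Theorem \ref{t-Rk2}) to conclude that the block sequence is weakly/boundedly/boundedly linearly $C$-regular, applies Theorem \ref{th:main} to $y^{k}=x^{ks}$, and finally lifts the conclusion to the whole sequence via Lemma \ref{th:Fejer2}. You instead argue directly on the original sequence: the bounded-gap hypothesis puts an index $\tilde n_{l}^{i}\in\{n_{k}^{i}\}$ into (almost) every window $[l,l+s-1]$, and since $\|x^{p+1}-x^{p}\|\to 0$ (Lemma \ref{f-6a}(ii)), regularity information at those indices transfers to all indices; in case (iii) the SQNE telescoping over one window coupled with the linear-regularity estimate gives the explicit $s$-step contraction $d(x^{l+s},C)\leq q_{0}\,d(x^{l},C)$, after which your endgame (Fact \ref{f-7} and Lemma \ref{th:Fejer2}(iii)) coincides with the paper's. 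In effect you inline the special case of Theorem \ref{t-Rk2} that is needed here; the paper's route is shorter given its earlier machinery and exhibits the theorem as an instance of the general product results, while yours is self-contained and produces explicit constants ($M=s-1+\max_{i}\delta_{i}$, $q_{0}=\sqrt{1-\rho/(\kappa^{2}M^{2})}$).

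Two small points, both of which are equally implicit in the paper's own argument and are easily repaired: (a) your window claim requires $l>n_{0}^{i}$ (nothing bounds $n_{0}^{i}$ itself), so the key estimates hold only for all sufficiently large $l$; this is harmless for (i) and (ii), and in (iii) it—like the passage through Lemma \ref{th:Fejer2}(iii), which already alters the constant in front of $q^{k}$—means the literal bound $2d(x^{0},C)q^{k}$ needs a final adjustment of constants that neither you nor the paper spells out; (b) for $q_{0}$ to be real one should normalize $\delta_{i}\geq 1$ and note $\kappa\geq 1$ (or observe that otherwise $d(x^{l},C)=0$), the same normalization the paper adopts before Theorem \ref{t-Rk2}.
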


\begin{proof}
Let 
\begin{equation}
T_{k}:=U_{k+s-1}U_{k+s-2}...U_{k}\text{,}  \label{proof:th:main2:Tk}
\end{equation}%
$k\geq 0$, and define a sequence $\{y^{k}\}_{k=0}^{\infty }$ by 
\begin{equation}
y^{0}:=x^{0};\quad y^{k+1}:=T_{k}y^{k}.  \label{proof:th:main2:yk}
\end{equation}%
Obviously, we have $y^{k}=x^{ks}$, $k=0,1,2,\ldots$..

(i) By Theorem \ref{t-Rk2}(i), the sequence $\{T_{k}\}_{k=0}^{\infty }$ is
weakly $C$-regular. Now Theorem \ref{th:main}(i) yields the weak convergence
of $y^{k}$ to a point $x^{\ast }\in C$. Moreover, by the assumption, $U_{k}$
is $\rho _{k}$-SQNE, and $\rho >0$ yields that $\lim_{k}\Vert
x^{k+1}-x^{k}\Vert =0$ (see Lemma \ref{f-6a}(ii)). In view of Lemma \ref%
{th:Fejer2}(i), these facts yield the weak convergence of $x^{k}$ to $%
x^{\ast }\in C$.

(ii) By Theorem \ref{t-Rk2}(ii), the sequence $\{T_{k}\}_{k=0}^{\infty }$ is 
$C$-regular. Now Theorem \ref{th:main}(ii) yields the convergence in norm of 
$y^{k}$ to $x^{\ast }\in C$. In view of Lemma \ref{th:Fejer2}(ii), this
yields the convergence in norm of $x^{k}$ to $x^{\ast }\in C$.

(iii) By Theorem \ref{t-Rk2}(iii), the sequence $\{T_{k}\}_{k=0}^{\infty }$
is linearly $C$-regular. Now Theorem \ref{th:main}(iii) yields the $R$%
-linear convergence of $y^{k}$ to $x^{\ast }\in C$. In view of Lemma \ref%
{th:Fejer2}(iii), this yields the $R$-linear convergence of $x^{k}$ to $%
x^{\ast }\in C$.
\end{proof}

\subsection{Applications to variational inequality problems}

Let $G\colon\mathcal{H}\rightarrow\mathcal{H}$ be monotone and let $%
C\subseteq\mathcal{H}$ be nonempty, closed and convex. We recall that the 
\textit{variational inequality problem} governed by $G$ and $C$, which we
denote by VI($G$,$C$), is to 
\begin{equation}
\text{find }\bar{u}\in C\text{ with }\langle G\bar{u},u-\bar{u}\rangle \geq 0%
\text{ for all }u\in C.  \label{e-VIP1}
\end{equation}%
It is well known that VI($G$,$C$) has a unique solution if, for example, $G$
is $\kappa $-Lipschitz continuous and $\eta$-strongly monotone, where $%
\kappa, \eta >0$ \cite[Theorem 46.C]{Zei85}. In this section we show how one
can apply the results of Section \ref{s-4} to an iterative scheme for
solving VI($G$, $C$). We begin with recalling some known results.

\begin{theorem}
\label{t-uk-WR} Let $G\colon\mathcal{H}\rightarrow\mathcal{H}$ be $\kappa$%
-Lipschitz continuous and $\eta$-strongly monotone, where $\kappa,\eta>0$,
and let $C\subseteq\mathcal{H}$ be nonempty, closed and convex. Moreover,
for each $k=0,1,2,\ldots,$ let $U_k\colon\mathcal{H}\rightarrow\mathcal{H}$
be $\rho_k$-strongly quasi-nonexpansive such that $C\subseteq \limfunc{Fix}
U_k$ and $\lambda_k\in[0,\infty)$. Consider the following method: 
\begin{equation}
u^0\in\mathcal{H}; \quad u^{k+1}=U_{k}u^{k}-\lambda _{k}GU_{k}u^{k}.
\label{e-uk}
\end{equation}%
Assume that $\rho:=\inf_k\rho_k>0$, $\lim_k\lambda_k =0$ and $%
\sum_k\lambda_k =\infty$. If $\{U_{k}\}_{k=0}^{\infty }$ is weakly $C$%
-regular (in particular, boundedly $C$-regular), then $u^{k}$ converges
strongly to the unique solution of VI($G$, $C$).
\end{theorem}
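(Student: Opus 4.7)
The plan is to prove strong convergence of $\{u^k\}$ to the unique solution $\bar u\in C$ of VI$(G,C)$---existence and uniqueness coming from the $\eta$-strong monotonicity together with the $\kappa$-Lipschitz continuity of $G$. First I would expand $\|u^{k+1}-\bar u\|^2$ using the update rule; combining $\eta$-strong monotonicity with the $\rho_k$-SQNE inequality for $U_k$ gives, once $\lambda_k$ is small enough that $1-2\lambda_k\eta>0$, the master recursion
\begin{equation*}
\|u^{k+1}-\bar u\|^2 \leq (1-2\lambda_k\eta)\|u^k-\bar u\|^2 - (1-2\lambda_k\eta)\rho_k\|U_k u^k-u^k\|^2 + \lambda_k \sigma_k,
\end{equation*}
where $\sigma_k := -2\langle G\bar u,\, U_k u^k-\bar u\rangle + \lambda_k \|GU_k u^k\|^2$. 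A standard induction using $\lambda_k\to 0$, the Lipschitz estimate $\|GU_ku^k-G\bar u\|\leq \kappa\|U_k u^k-\bar u\|$, and the QNE inequality $\|U_ku^k-\bar u\|\leq\|u^k-\bar u\|$ yields that $\{u^k\}$, and hence $\{\sigma_k\}$, are bounded, while $\|u^{k+1}-U_ku^k\|=\lambda_k\|GU_ku^k\|\to 0$.

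Setting $a_k:=\|u^k-\bar u\|^2$, I would then run a Mainge-type two-case analysis. In Case 1, $\{a_k\}$ is eventually non-increasing, so it converges; rearranging the master recursion and invoking $\rho_k\geq\rho>0$ with $\lambda_k\to 0$ forces the full-sequence asymptotic regularity $\|U_ku^k-u^k\|\to 0$. Choosing a weakly convergent subsequence $u^{n_k}\rightharpoonup u^{\ast}$ that attains $\limsup_k\langle -G\bar u,\, u^k-\bar u\rangle$ and invoking the weak $C$-regularity of $\{U_k\}$ identifies $u^{\ast}\in C$; the VI inequality at $\bar u$ then gives $\langle G\bar u,\, u^{\ast}-\bar u\rangle\geq 0$, i.e., $\limsup_k\sigma_k\leq 0$. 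A Xu-type lemma applied to $a_{k+1}\leq(1-2\lambda_k\eta)a_k+\lambda_k\sigma_k$ with $\sum\lambda_k=\infty$ then yields $a_k\to 0$. In Case 2, where $\{a_k\}$ is not eventually monotone, I would use Mainge's index $\tau(k):=\max\{j\leq k:a_j<a_{j+1}\}$, which for $k$ large satisfies $\tau(k)\to\infty$, $a_{\tau(k)}\leq a_{\tau(k)+1}$, and $a_k\leq a_{\tau(k)+1}$; the master recursion combined with $a_{\tau(k)}\leq a_{\tau(k)+1}$ gives both $\|U_{\tau(k)}u^{\tau(k)}-u^{\tau(k)}\|\to 0$ and the pointwise bound $a_{\tau(k)+1}\leq \sigma_{\tau(k)}/(2\eta)$. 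A padding argument as in Remark~\ref{r-WR2}(ii)---inserting a common fixed point at indices not of the form $\tau(k)$---then lets weak $C$-regularity apply along $\{\tau(k)\}$ to conclude $\limsup_k\sigma_{\tau(k)}\leq 0$, whence $a_{\tau(k)+1}\to 0$ and therefore $a_k\to 0$.

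The main obstacle is the correct invocation of weak $C$-regularity: its definition demands that $\|U_kx^k-x^k\|\to 0$ along \emph{every} index, not merely along the weakly convergent subsequence, so one must either derive this directly from the master recursion---as in Case 1---or manufacture it via a padding construction---as in Case 2---before one can pass to weak cluster points and activate the VI inequality that drives the Xu lemma to closure.
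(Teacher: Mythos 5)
Your argument is correct, but it is necessarily a different route from the paper's, because the paper does not prove Theorem \ref{t-uk-WR} internally at all: it simply cites \cite[Theorem 4.8]{Ceg15} (with \cite[Theorem 4.3]{AK14} and \cite[Theorem 2.4]{Hir06} as related results) and adds only the observation, via Corollary \ref{c-reg}(ii), that bounded $C$-regularity implies weak $C$-regularity. What you do instead is reconstruct a self-contained hybrid-steepest-descent proof: the master recursion $\Vert u^{k+1}-\bar u\Vert^2\leq(1-2\lambda_k\eta)\Vert u^k-\bar u\Vert^2-(1-2\lambda_k\eta)\rho_k\Vert U_ku^k-u^k\Vert^2+\lambda_k\sigma_k$ checks out (strong monotonicity applied to $\langle GU_ku^k,U_ku^k-\bar u\rangle$ plus the SQNE inequality, valid once $2\lambda_k\eta<1$), and the Maing\'e two-case analysis combined with a Xu-type lemma \cite{Xu02} closes both cases; crucially, you correctly isolate the one place where the hypothesis is delicate, namely that weak $C$-regularity in Definition \ref{d-RS}(a) demands $\Vert U_kx^k-x^k\Vert\rightarrow0$ along \emph{all} indices, and your padding construction along the Maing\'e indices $\tau(k)$ is exactly the device the paper itself uses in Remark \ref{r-WR2}(ii) and Proposition \ref{l-sub-BR}, so it is legitimate here (pad with any $z\in C\subseteq\limfunc{Fix}U_k$). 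What each approach buys: the paper's citation is short and inherits the general viscosity/hybrid framework of \cite{Ceg15}; your proof makes explicit where $\rho>0$, $\lambda_k\rightarrow0$, $\sum_k\lambda_k=\infty$ and weak $C$-regularity each enter, at the cost of rederiving standard machinery. Two small points you should make explicit when writing it up: the boundedness induction needs the strong monotonicity (through the contraction estimate $\Vert(\limfunc{Id}-\lambda_kG)x-(\limfunc{Id}-\lambda_kG)y\Vert\leq\sqrt{1-2\lambda_k\eta+\lambda_k^2\kappa^2}\,\Vert x-y\Vert$ for small $\lambda_k$, or a citation to \cite[Lemma 9]{CZ13}), not merely the Lipschitz and QNE inequalities you list, which alone give only $(1+\lambda_k\kappa)$-growth; and in Case 2 the division yielding $a_{\tau(k)}\leq\sigma_{\tau(k)}/(2\eta)$ requires $\lambda_{\tau(k)}>0$, which is automatic since $\lambda_{\tau(k)}=0$ would give $a_{\tau(k)+1}\leq a_{\tau(k)}$ by quasi-nonexpansivity, contradicting the definition of $\tau(k)$. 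Neither point is a gap in the strategy.
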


\begin{proof}
See \cite[Theorem 4.8]{Ceg15}. For related results see also \cite[Theorem 4.3%
]{AK14} and \cite[Th. 2.4]{Hir06}. The part regarding a boundedly regular
sequence of operators follows from Corollary \ref{c-reg}(ii) in view of
which a boundedly $C$-regular sequence is also weakly $C$-regular.
\end{proof}

\bigskip

As we mentioned in the previous subsection, the assumption that $%
\{U_{k}\}_{k=0}^{\infty }$ is weakly regular is quite strong. In particular,
this assumption excludes (almost) cyclic and intermittent controls. In the
next result we show that one can still establish norm convergence for method %
\eqref{e-uk} in the case of the above-mentioned controls, but at the cost of
imposing bounded regularity of both families of operators and sets.

\begin{theorem}
\label{t-uku} Let $G\colon\mathcal{H}\rightarrow\mathcal{H}$ be $\kappa$%
-Lipschitz continuous and $\eta$-strongly monotone, where $\kappa,\eta>0$,
and let $C:=\bigcap_{i\in I}C_i\subseteq\mathcal{H}$ be nonempty, where for
each $i\in I:=\{1\ldots,m\}$, $C_i$ is closed and convex. Moreover, for each 
$k=0,1,2,\ldots,$ let $U_k\colon\mathcal{H}\rightarrow\mathcal{H}$ be $%
\rho_k $-strongly quasi-nonexpansive such that $C\subseteq \limfunc{Fix} U_k$%
, $\lambda_k\in[0,\infty)$ and consider the following method: 
\begin{equation}
u^0\in\mathcal{H}; \quad u^{k+1}=U_{k}u^{k}-\lambda _{k}GU_{k}u^{k}.
\label{e-uk2}
\end{equation}%
Assume that $\rho:=\inf_k\rho_k>0$, $\lim_k\lambda_k =0$ and $%
\sum_k\lambda_k =\infty$. Moreover, assume that there is $s\geq 0$ such that
for any $i\in I$ and $k=0,1,2,\ldots$, there is $l_{k}\in
\{k,k+1,...,k+s-1\} $ such that the subsequence $\{U_{l_{k}}\}_{k=0}^{\infty
}$ is $C_{i}$-regular and that the family $\{C_{i}\mid i\in I\}$ is
boundedly regular. Then $u^{k}$ converges strongly to the unique solution of
VI($G$, $C$).
\end{theorem}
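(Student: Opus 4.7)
Let $\bar u$ denote the unique solution of VI($G$,$C$). The plan is to mimic the proof of Theorem \ref{t-uk-WR}, except that, since $\{U_k\}_{k=0}^\infty$ need not be weakly $C$-regular under the present hypothesis, I will use the block-regularity assumption combined with the bounded regularity of $\{C_i\mid i\in I\}$ to establish directly that $d(u^k,C)\to 0$, and then feed this back into the standard VI machinery.

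First, arguing exactly as in the proof of Theorem \ref{t-uk-WR} (cf.\ \cite[Theorem 4.8]{Ceg15}), the quasi-nonexpansivity of $U_k$ with $\bar u\in\limfunc{Fix}U_k$, the Lipschitz continuity of $G$ and $\lambda_k\to 0$ yield that $\{u^k\}_{k=0}^\infty$ is bounded. Expanding $\|u^{k+1}-\bar u\|^2$ by means of the $\rho$-SQNE property of $U_k$ and the strong monotonicity of $G$ produces an inequality of the form
\begin{equation*}
\|u^{k+1}-\bar u\|^2 \leq (1-2\eta\lambda_k)\|u^k-\bar u\|^2 - (1-2\eta\lambda_k)\rho\,\|U_ku^k-u^k\|^2 + 2\lambda_k\langle -G\bar u,\,U_ku^k-\bar u\rangle + O(\lambda_k^2),
\end{equation*}
from which a standard telescoping/rearrangement argument extracts $\|U_ku^k-u^k\|\to 0$. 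Consequently $\|u^{k+1}-u^k\|\leq \|U_ku^k-u^k\|+\lambda_k\|GU_ku^k\|\to 0$, where boundedness of $\|GU_ku^k\|$ follows from the Lipschitz continuity of $G$ and the boundedness of $\{u^k\}$.

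Second, I fix $i\in I$ and let $\{l_k^i\}_k$ denote the indices from the hypothesis, so that $l_k^i\in\{k,\ldots,k+s-1\}$ and $\{U_{l_k^i}\}_k$ is boundedly $C_i$-regular. Applied to the bounded sequence $y^k:=u^{l_k^i}$ — which satisfies $\|U_{l_k^i}y^k-y^k\|\to 0$ because the full sequence does — the $C_i$-regularity yields $d(u^{l_k^i},C_i)\to 0$. Since $|l_k^i-k|\leq s-1$, the telescoping estimate
\begin{equation*}
\|u^k - u^{l_k^i}\|\leq (s-1)\sup_{j\geq k-s+1}\|u^{j+1}-u^j\|\longrightarrow 0
\end{equation*}
combined with the triangle inequality yields $d(u^k,C_i)\to 0$. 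By bounded regularity of $\{C_i\mid i\in I\}$ applied to the bounded set $\{u^k\}$, I conclude $d(u^k,C)\to 0$.

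Third, since $\{u^k\}$ is bounded, $d(u^k,C)\to 0$ and $\|U_ku^k-u^k\|\to 0$, every weak cluster point of $\{U_ku^k\}$ (and of $\{u^k\}$) belongs to $C$. The VI-characterization of $\bar u$ therefore gives $\limsup_k\langle -G\bar u,\,U_ku^k-\bar u\rangle\leq 0$. Inserting this into the displayed inequality above and invoking a standard Xu-type lemma (using $\sum_k\lambda_k=\infty$ and $\lambda_k\to 0$) produces $\|u^k-\bar u\|\to 0$. The main obstacle is the middle step: the $C_i$-regularity is available only along the possibly irregular sub-indices $\{l_k^i\}$, and bridging back to the full sequence relies crucially on the bounded growth $|l_k^i-k|\leq s-1$ interacting with $\|u^{k+1}-u^k\|\to 0$ from the first step; without this interaction, the bounded regularity of $\{C_i\mid i\in I\}$ alone would not suffice.
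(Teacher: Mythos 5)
Your second step---bridging from the indices $l_k^i$ back to the original indices via the triangle inequality, the vanishing of $\lambda_l\Vert GU_lu^l\Vert$, and the bounded regularity of $\{C_i\mid i\in I\}$---is essentially the core verification in the paper's proof. The difference is that the paper never claims $\Vert U_ku^k-u^k\Vert\to 0$ for the whole sequence: it cites \cite[Theorem 2.17]{GRZ17}, which reduces strong convergence of \eqref{e-uk2} to the \emph{conditional} implication that, for every subsequence $\{n_k\}_{k=0}^\infty$, $\sum_{l=n_k}^{n_k+s-1}\Vert U_lu^l-u^l\Vert\to 0$ forces $d(u^{n_k},C)\to 0$, and then runs your bridging argument only along such subsequences.

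The genuine gap is in your first step: the assertion that a ``standard telescoping/rearrangement argument'' extracts $\Vert U_ku^k-u^k\Vert\to 0$ from your displayed inequality. That inequality has the shape $s_{k+1}\le s_k-\rho' r_k^2+M\lambda_k$ with $s_k=\Vert u^k-\bar u\Vert^2$ bounded and $r_k=\Vert U_ku^k-u^k\Vert$; since $\sum_k\lambda_k=\infty$, the error terms are not summable, telescoping yields nothing, and the upward drift of $s_k$ over the gaps between putative bad indices is unbounded, so $r_k\to 0$ does not follow. Nor can you discard the term $2\lambda_k\langle -G\bar u,U_ku^k-\bar u\rangle$ as asymptotically nonpositive at this stage, because that requires knowing that cluster points of $U_ku^k$ lie in $C$, which in your scheme depends on the very residual convergence you are trying to establish---the argument is circular. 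Since your steps two and three both hinge on the full-sequence limits $\Vert U_ku^k-u^k\Vert\to 0$ and $\Vert u^{k+1}-u^k\Vert\to 0$, the proof as written does not go through; this is exactly why the proofs of Theorem \ref{t-uk-WR} and of \cite[Theorem 2.17]{GRZ17} use a monotone-tail versus oscillating-case (Maing\'e-type) analysis, in which residual decay is obtained only along suitable subsequences. To repair your argument, either invoke \cite[Theorem 2.17]{GRZ17} as the paper does and apply your bridging step along an arbitrary subsequence $\{n_k\}$ with $\sum_{l=n_k}^{n_k+s-1}\Vert U_lu^l-u^l\Vert\to 0$, or rework steps one and three with the two-case dichotomy so that the regularity hypotheses are only ever applied along the subsequences where the residuals are actually known to vanish.
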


\begin{proof}
By \cite[Theorem 2.17]{GRZ17}, it suffices to show that the implication 
\begin{equation}
\lim_{k}\sum_{l=n_{k}}^{n_{k}+s-1}\Vert U_{l}u^{l}-u^{l}\Vert=0
\Longrightarrow \lim_{k}d(u^{n_{k}},C)=0
\end{equation}
holds true for any arbitrary subsequence $\{n_{k}\}_{k=0}^{\infty }\subseteq
\{k\}_{k=0}^{\infty }$. To this end, choose $\{n_{k}\}_{k=0}^{\infty
}\subseteq \{k\}_{k=0}^{\infty }$ and assume that 
\begin{equation}  \label{t-uku:proof:1}
\lim_{k}\sum_{l=n_{k}}^{n_{k}+s-1}\Vert U_{l}u^{l}-u^{l}\Vert=0.
\end{equation}
Let $i\in I$ be arbitrary. By assumption, for each $k=0,1,2,\ldots,$ there
is $l_{k}\in \{n_{k},n_{k}+1,...,n_{k}+s-1\}$ such that $\{U_{l_{k}}%
\}_{k=0}^{\infty }$ is $C_{i}$-regular. So, by the boundedness of $%
\{u^k\}_{k=0}^\infty$ (see \cite[Lemma 9]{CZ13}) and \eqref{t-uku:proof:1},
we have 
\begin{equation}
\lim_{k}d(u^{l_{k}},C_{i})=0\text{.}  \label{e-lim-dujk}
\end{equation}%
Observe that the boundedness of $u^{k}$ and $\lim_{k}\lambda _{k}=0$ lead to 
\begin{equation}
\lim_{k}\sum_{l=k}^{k+s-1}\lambda _{l}\Vert GU_{l}u^{l}\Vert =0\text{.}
\label{e-sum-GUk}
\end{equation}%
Moreover, the triangle inequality, \eqref{e-uk2}, \eqref{t-uku:proof:1} and %
\eqref{e-sum-GUk} imply that

\begin{align}
\Vert u^{n_{k}}-u^{l_{k}}\Vert & \leq \sum_{l=n_{k}}^{l_{k}}\Vert
u^{l+1}-u^{l}\Vert \leq \sum_{l=n_{k}}^{n_{k}+s-1}\Vert u^{l+1}-u^{l}\Vert 
\notag \\
& \leq \sum_{l=n_{k}}^{n_{k}+s-1}\Vert U_{l}u^{l}-u^{l}\Vert
+\sum_{l=n_{k}}^{n_{k}+s-1}\lambda _{l}\Vert GU_{l}u^{l}\Vert \rightarrow_k
0.
\end{align}
This, the definition of the metric projection, the triangle inequality and (%
\ref{e-lim-dujk}) yield

\begin{align}
d(u^{n_{k}},C_{i}) & =\Vert u^{n_{k}}-P_{C_{i}}u^{n_{k}}\Vert \leq \Vert
u^{n_{k}}-P_{C_{i}}u^{l_{k}}\Vert  \notag \\
& \leq \Vert u^{n_{k}}-u^{l_{k}}\Vert +\Vert
u^{l_{k}}-P_{C_{i}}u^{l_{k}}\Vert \rightarrow_k 0.
\end{align}
Since $i\in I$ is arbitrary and the family $\{C_{i}\mid i\in I\}$ is
boundedly regular, $\lim_{k}d(u^{n_k},C)=0$, which completes the proof.
\end{proof}

\begin{remark}
\rm\ %
\cite[Theorem 2.17]{GRZ17}, which we have used in order to prove Theorem \ref%
{t-uku}, appeared for the first time in \cite[Theorem 3.16]{Zal14}. Since
this result was presented in Polish, we refer here to a paper which has been
published in English. Related results can be found, for example, in \cite[%
Theorem 12]{CZ13} or \cite[Theorem 4.13]{Ceg15}.
\end{remark}

\noindent \textbf{Funding.} The research of the second author was supported
in part by the Israel Science Foundation (Grants no. 389/12 and 820/17), the
Fund for the Promotion of Research at the Technion and by the Technion
General Research Fund.

\textbf{Acknowledgments.} We are grateful to an anonymous referee for
his/her comments and remarks which helped us to improve our manuscript.

\end{document}